\newtheorem{theorem}{Theorem}[section]
\newtheorem{lemma}[theorem]{Lemma}
\newtheorem{corollary}[theorem]{Corollary}
\theoremstyle{definition}
\newtheorem{definition}[theorem]{Definition}
\newtheorem{example}[theorem]{Example}
\newtheorem{assumption}[theorem]{Assumption}
\newtheorem{proposition}[theorem]{Proposition}
\theoremstyle{remark}
\newtheorem{remark}[theorem]{Remark}
\numberwithin{equation}{section}
\newcommand{\1}{\mathds 1}
\begin{document}
\title[Quasi-ergodic limits for finite absorbing Markov chains]{Quasi-ergodic limits\\for finite absorbing Markov chains}
\author{Fritz Colonius}
\author{Martin Rasmussen}
\address[Fritz Colonius]{Institut f\"{u}r Mathematik, Universit\"{a}t Augsburg, 86159 Augsburg, Germany}
\email[Fritz Colonius]{fritz.colonius@math.uni-augsburg.de}
\address[Martin Rasmussen]{Department of Mathematics, Imperial College London, 180 Queen's Gate, London
SW7 2AZ, United Kingdom}
\email[Martin Rasmussen]{m.rasmussen@imperial.ac.uk}
\date{\today}
\thanks{Fritz Colonius gratefully acknowledges support by a Nelder Visiting Fellowship from the Department of Mathematics, Imperial College London. Martin Rasmussen was supported by the European Union's Horizon 2020 research and innovation programme under the Marie Skłodowska-Curie grant agreement No. 643073.}
\subjclass[2010]{15B51, 37A25, 60J10}
\keywords{Absorbing Markov chain, Quasi-stationary measure, Quasi-ergodic measure, Substochastic matrix}
\begin{abstract}
We present formulas for quasi-ergodic limits of finite absorbing Markov chains. Since the irreducible case has been solved in 1965 by Darroch and Seneta~\cite{DarrS65}, we focus on the reducible case, and our results are based on a very precise asymptotic analysis of the (exponential and polynomial) growth behaviour along admissible paths.
\end{abstract}
\maketitle

\section{Introduction\label{Section1}}

The long-term statistical behaviour of Markov chains is determined by their ergodic stationary measures, in the sense that the time average of an observable of the process converges to the space average of the observable with respect to the ergodic stationary measure. In the context of absorbing Markov chains, the function of a stationary measure is naturally replaced by a quasi-stationary measure, and a quasi-stationary measure describes a statistical equilibrium distribution conditioned on that the Markov chain is not absorbed. The field of quasi-stationary measures has been very active recently, see the monograph Collet, Martinez and San Martin \cite{ColMS13}, as well as Champagnat and Villemonais \cite{ChamV16, ChamV18}, and the survey M\'{e}l\'{e}ard and Villemonais \cite{Meleard12}, which, in particular, covers applications to ecology and population dynamics.

It is well known that, when analysing the long-term statistical behaviour of absorbed Markov chains, quasi-stationary measures do not have the same function as stationary measures for non-absorbed Markov chains, despite their natural correspondence. In many settings, the time average of an observable of an absorbed Markov chain exists (when conditioned to non-absorption of the Markov chain), but this quantity is in general not equal to the space average of the observable with respect to the quasi-stationary measure. It turns out that, when taking the space average, the quasi-stationary measure needs to be replaced by another measure, often called quasi-ergodic measure. This has been first established for irreducible finite Markov chains by Darroch and Seneta \cite{DarrS65}. For more general irreducible Markov processes, Breyer and Roberts \cite{BreyR99} analysed this systematically, and they showed that the quasi-ergodic measure is absolutely continuous with respect to the quasi-stationary measure; they also coined the term \emph{quasi-ergodic limits} \cite[Theorem 1]{BreyR99} for these (conditioned) ergodic limits (cf.~also Zhang, Li, and Song \cite{ZhanLS14} and He, Zhang, and Zhu \cite{HeZhaZhu19}). Such quasi-ergodic limits have recently been used to define and analyse so-called conditioned Lyapunov exponents that describe the dynamical behaviour of random dynamical systems in compact subsets of the phase space, see Engel, Lamb and Rasmussen \cite{Engel_19_1}.

The literature on quasi-ergodic measures and limits has exclusively focussed on irreducible stochastic processes so far, and in this article, we aim at contributing to an understanding of the reducible case. It turns out that the analysis of quasi-ergodic limits is much more complicated for reducible processes, and for this reason, we focus here on the simplest possible case, given by finite state absorbing Markov chains.

We consider a stochastic matrix $P\in \mathbb{R}^{(d+1)\times(d+1)}$ of the form
\begin{equation}
P=%
\begin{pmatrix}
1 & 0\\
R & Q
\end{pmatrix}
\,, \label{S_stochmatrixP}%
\end{equation}
where $0$ is a row vector of zeros, and $R\in\mathbb{R}^{d\times1}%
,Q\in\mathbb{R}^{d\times d}$ with $R,Q\not =0$ and $d\geq2$.

We denote by $(X_{i})_{i\in\mathbb{N}_{0}}$ the Markov chain associated to the
substochastic matrix $Q$ starting in a probability vector $\pi\in\mathbb{R}^{d}$. We suppose that all states $\{1,\dots,d\}$ are transient,
i.e.~the probability of return to some state when starting in that state is less that $1$, which is equivalent to saying that the eigenvalues of the matrix $Q$ lie inside the unit circle (and in particular $1$ is not an eigenvalue of $Q$). Thus, this stochastic process is absorbed almost surely with absorption time $T$, meaning that the absorption state~$0$ is reached at time $T$.

We are interested in the \emph{quasi-ergodic limit}
\begin{equation}
\lim_{n\rightarrow\infty}\mathbb{E}_{\pi}\Big[\frac{1}{n+1}\sum_{i=0}%
^{n}f(X_{i})\Big\vert T>n\Big], \label{S_goal}%
\end{equation}
where $f:\{1,\dots,d\}\rightarrow\mathbb{R}$ is a given observable. As we will show in Corollary \ref{P_Corollary1} below, the
expectation in \eqref{S_goal} is determined by the average time the Markov chain visits its states, and hence, we
have to determine the \emph{quasi-ergodic measure}
\begin{equation}
\lim_{n\rightarrow\infty}\mathbb{E}_{\pi}\big[\tfrac{1}{n+1}\#\big\{m\in
\{0,\dots,n\}:X_{m}=j\big\}\big|T>n\big] \quad \mbox{for } j\in\{1,\dots,d\}\,. \label{1.3}%
\end{equation}
The main results, Theorem \ref{Theorem3} and Theorem \ref{Theorem4},
provide formulas for this limit.

This paper is organised as follows. Section~\ref{Section2} provides useful representations of the expectation in \eqref{1.3}. These are based on
results from Darroch and Seneta \cite{DarrS65}; the proofs are postponed to the Appendix, and we note that the quasi-ergodic limits for the
the irreducible case follow easily from these representations. We consider the theoretical analysis of the reducible case in Section~\ref{sec3}. In Subsection~\ref{subsecadmpath}, we assume without loss
of generality that the reducible matrix $Q$ is given in Frobenius normal form.
This can be achieved by permutations of the rows and columns, and the Frobenius
normal form is unique up to certain permutations, see Gantmacher~\cite[Chapter XIII,~\S 4]{Gant59}). We use admissible paths to reformulate the formulas for the quasi-ergodic limit, and in Subsection~\ref{Section4},
the main results are stated and proved. Here, we crucially have to assume that in the Frobenius normal form, the submatrices in the diagonal are scalar if their Perron--Frobenius eigenvalue is smaller than the maximal Perron--Frobenius eigenvalue. Finally, we illustrate the theoretical results by means of several examples in Section~\ref{sec4}.

\textbf{Notation.} Probability vectors $\pi$ are row vectors, while all other
vectors in $\mathbb{R}^{n}$ are column vectors. In all spaces $\mathbb{R}^{n}$
we abbreviate $\1=(1,\dots,1)^{\top}$. The set of natural numbers is
denoted by $\mathbb{N}$ and $\mathbb{N}_{0}=\mathbb{N}\cup\{0\}$. The number
of elements of a finite set $A$ is denoted by
$\#A$. For $k\in\mathbb{N}$ and $m\in\mathbb{Z}$
\[
\Gamma_{k}(m):=\big\{(\eta_{1},\dots,\eta_{k})\in\mathbb{N}_{0}^{k}:\eta
_{1}+\cdots+\eta_{k}=m\big\}
\]
and note that $\Gamma_{k}(m)=\emptyset$ for $m<0$. Products with an empty
index set are defined as $\prod_{i\in\emptyset}x_{i}=1$.

\section{Quasi-ergodic limits in the irreducible case\label{Section2}}

In this section, we consider the substochastic matrix $Q$
from~\eqref{S_stochmatrixP}, and we present results from Darroch and Seneta
\cite{DarrS65} for quasi-ergodic limits of the form \eqref{S_goal} in the
special case when $Q$ is irreducible.

Recall that if the matrix $Q$ is irreducible, then it is either eventually
positive or cyclic. It follows from the Perron--Frobenius theorem that $Q$ has
a simple eigenvalue $\rho\in(0,1)$, called the Perron--Frobenius eigenvalue,
such that the absolute values of all other eigenvalues of $Q$ are equal to or
less than $\rho$. The left eigenvector to this eigenvalue, $u\in\mathbb{R}%
^{d}$ with $u^{\top}Q=\rho u^{\top}$, has only positive entries and describes a quasi-stationary measure
when normalised via $\sum_{i=1}^{d}u_{i}=1$, which we assume in the following. If $Q$ is eventually
positive, then the absolute values of all other eigenvalues are smaller that
$\rho$.

The following proposition is our starting point for deriving formulas for
quasi-ergodic limits. The proof is given in the Appendix. Denote
\begin{equation}
\pi_{j}(z):=\pi D_{j}(z)\quad\text{and}\quad Q_{j}(z):=QD_{j}(z)
\quad\mbox{for all } z\in\mathbb{R}\,, \label{P_Q_j}%
\end{equation}
where $D_{j}(z)$ is the $d\times d$ diagonal matrix whose $j$-th diagonal
element is $z$ and all other diagonal elements are equal to $1$.

\begin{proposition}
\label{P_prop1} Consider a substochastic matrix $Q\in\mathbb{R}^{d\times d}$,
and let $(X_{i})_{i\in\mathbb{N}_{0}}$ be the associated Markov chain starting
in $\pi$. Then the following statements hold.

\begin{itemize}
\item[(i)] For all $j\in\{1,\dots,d\}$ and $n\in\mathbb{N}$, we have%
\[
\mathbb{E}_{\pi}\big[\tfrac{1}{n+1}\#\big\{m\in\{0,\dots,n\}:X_{m}%
=j\big\}\big|T>n\big]=\frac{\frac{\mathrm{d}}{\mathrm{d}z}\pi_{j}(z)Q_{j}%
^{n}(z)\1 \Big|_{z=1}}{(n+1)\pi Q^{n}\1 }.
\]

\item[(ii)] Suppose that $Q$ is eventually positive. Then for $j\in
\{1,\dots,d\}$, we have
\[
\mathbb{E}_{\pi}\big[\tfrac{1}{n+1}\#\big\{m\in\{0,\dots,n\}:X_{m}%
=j\big\}\big|T>n\big]=u_{j}v_{j}+O(\tfrac{1}{n}) \quad\mbox{as } n\to
\infty\,,
\]
where $v$ is the positive right eigenvector of $Q$ for the Perron--Frobenius
eigenvalue $\rho$ normalised by $u^{\top}v=1$.

\item[(iii)] Suppose that $Q$ is cyclic with period $h\in\mathbb{N}%
\setminus\{1\}$. Then $Q^{h}$ is eventually positive, and for all
$j\in\{1,\dots,d\}$, we have for the left and right normalised eigenvectors
$u^{\top}$ and $v$ of $Q^{h}$ for the eigenvalue $\rho^{h}$ that
\[
\mathbb{E}_{\pi}\big[\tfrac{1}{hn+1}\#\big\{m\in\{0,\dots,hn\}:X_{m}%
=j\big\}\big|T>hn\big\}=u_{j}v_{j}+O(\tfrac{1}{hn}) \quad\mbox{as } n\to
\infty\,.
\]

\end{itemize}
\end{proposition}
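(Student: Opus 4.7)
I will establish the three parts in order: (i) by a direct Markov-property computation matched to the stated derivative expression; (ii) by Perron--Frobenius asymptotics applied to the formula from (i); and (iii) by reducing to the eventually positive case via the block-diagonal structure of $Q^h$ together with a phase-summation argument.

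For (i), the plan is to expand the counting functional as $\#\{m\in\{0,\dots,n\}:X_m=j\} = \sum_{m=0}^n \1_{\{X_m=j\}}$ and apply the Markov property to obtain $\mathbb{P}_\pi(X_m=j,\,T>n) = \pi Q^m e_j e_j^\top Q^{n-m}\1$, where $e_j$ is the $j$-th standard basis vector, together with $\mathbb{P}_\pi(T>n) = \pi Q^n \1$. Summing over $m$ and dividing by $(n+1)\pi Q^n\1$ gives $\frac{1}{(n+1)\pi Q^n\1}\sum_{m=0}^n \pi Q^m e_j e_j^\top Q^{n-m}\1$. To identify the numerator with the stated derivative, I rewrite $D_j(z) = I + (z-1)\,e_j e_j^\top$; then $\pi D_j(z)(QD_j(z))^n\1$ is a polynomial in $z$, and the $z=1$ derivative selects precisely the terms linear in $(z-1)$, each replacing exactly one of the $n+1$ copies of $D_j(z)$ by $e_j e_j^\top$ and the others by $I$. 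Summing over the position of the distinguished factor reproduces $\sum_{m=0}^n \pi Q^m e_j e_j^\top Q^{n-m}\1$.

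For (ii), eventual positivity and Perron--Frobenius yield the spectral decomposition $Q^n = \rho^n v u^\top + E_n$ with $\|E_n\|\le Cr^n$ for some $r\in(0,\rho)$, under the normalisation $u^\top v = 1$. Substituting into the formula from (i), each summand of the numerator equals $\rho^n u_j v_j (\pi v)(u^\top\1)$ plus cross-terms bounded by $C'(\rho^m r^{n-m} + \rho^{n-m} r^m + r^n)$. Summing each cross-term family across $m=0,\dots,n$ contributes $O(\rho^n)$, so the numerator equals $(n+1)\rho^n u_j v_j (\pi v)(u^\top\1) + O(\rho^n)$, while the denominator is $(n+1)\rho^n(\pi v)(u^\top\1)(1+o(1))$; forming the ratio gives $u_j v_j + O(1/n)$.

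For (iii), the eigenvalues of $Q$ of modulus $\rho$ are exactly $\rho\omega^k$ for $\omega = e^{2\pi i/h}$ and $k=0,\dots,h-1$; these all map to $\rho^h$ under $z\mapsto z^h$, while every remaining eigenvalue of $Q^h$ has modulus strictly less than $\rho^h$. After permutation into Frobenius normal form, $Q^h$ is block-diagonal with $h$ irreducible eventually positive blocks supported on the cyclic classes, each with Perron eigenvalue $\rho^h$. Substituting $n\mapsto hn$ in the formula from (i) and inserting the spectral expansion $Q^m = \rho^m \sum_{k=0}^{h-1}\omega^{km} v^{(k)}(u^{(k)})^\top + O(s^m)$ with $s<\rho$, one uses the phase identity $\sum_{m=0}^{hn}\omega^{(k-l)m} = (hn+1)\delta_{k,l} + O(1)$ (the off-diagonal sums being bounded since $\omega^{(k-l)hn}=1$) to isolate the diagonal contributions $k=l$ of size $hn+1$; these, combined with the parallel expansion of the denominator and the identification of the aggregated block data with the vectors $u,v$ inherited from $Q$, reproduce $u_j v_j$ with correction of order $O(1/(hn))$. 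The main technical obstacle throughout is the careful accounting of cross-term errors in (ii); in the cyclic case this is compounded by the need to handle oscillatory phases, whose averages vanish cleanly only because the time horizon is a multiple of the period $h$.
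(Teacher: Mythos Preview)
Your treatment of (i) and (ii) is correct and essentially coincides with the paper's argument. The paper phrases (i) via the probability generating function (their Lemma~A.2), but unwinding that lemma gives exactly your identity $\sum_{m=0}^{n}\pi Q^{m}e_{j}e_{j}^{\top}Q^{n-m}\1$ for the numerator; your route via $D_{j}(z)=I+(z-1)e_{j}e_{j}^{\top}$ and the product rule is the same computation read in the other direction. For (ii) both you and the paper substitute the Perron--Frobenius expansion $Q^{m}=\rho^{m}vu^{\top}+(\text{lower order})$ into this sum and control the three families of cross terms by a geometric-series bound.

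Part (iii) is where you diverge. The paper does not use the complex spectral expansion of $Q$ at all: it passes to the $h$-step chain with transition matrix $Q^{h}$, invokes (ii) for each phase $\ell\in\{0,\dots,h-1\}$ (so that the relevant subsequence is $(X_{mh+\ell})_{m\ge 0}$), and then averages over $\ell$. This is shorter and avoids any phase bookkeeping, although the paper's claim that ``$Q^{h}$ is eventually positive'' is imprecise---as you correctly note, $Q^{h}$ is only block-diagonal with eventually positive blocks on the cyclic classes, and it is really (ii) applied blockwise that is being used. Your approach via $Q^{m}=\rho^{m}\sum_{k}\omega^{km}v^{(k)}(u^{(k)})^{\top}+O(s^{m})$ and the orthogonality $\sum_{m=0}^{hn}\omega^{(k-l)m}=(hn+1)\delta_{kl}+O(1)$ also works, but the step you summarise as ``identification of the aggregated block data with the vectors $u,v$'' hides a concrete fact you should state: with the standard choice $v^{(k)}_{i}=\omega^{k\cdot c(i)}v_{i}$ and $u^{(k)}_{i}=\omega^{-k\cdot c(i)}u_{i}$ (where $c(i)$ indexes the cyclic class of $i$), one has $u^{(k)}_{j}v^{(k)}_{j}=u_{j}v_{j}$ for every $k$, so the diagonal contributions factor as $u_{j}v_{j}\sum_{k}(\pi v^{(k)})((u^{(k)})^{\top}\1)$ and cancel against the denominator. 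Once this is made explicit your argument is complete; without it the reader cannot see why the ratio collapses to $u_{j}v_{j}$ rather than a weighted average over $k$.
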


The following corollary uses the above result for the average evaluation of an
observable. The formula provided in (i) below will be the basis of our further
analysis of the reducible case. It shows that, in particular, the probability
of the average evaluation of an observable $f$ is determined by the average
number of times that $X_{i}$ is in some state $j$. For the irreducible case,
assertion (ii) below concerns a formula for the quasi-ergodic limit involving
the normalised right and left eigenvectors for the Perron--Frobenius
eigenvalue $\rho$ of $Q$.

\begin{corollary}
\label{P_Corollary1} Consider a substochastic matrix $Q\in\mathbb{R}^{d\times
d}$, and let $(X_{i})_{i\in\mathbb{N}_{0}}$ be the associated Markov chain
starting in $\pi$. Then the following statements hold.

\begin{itemize}
\item[(i)] For all $n\in\mathbb{N}$, we have
\begin{align*}
\mathbb{E}_{\pi}\left[  \tfrac{1}{n+1}\sum_{i=0}^{n}f(X_{i}%
)\Big\vert T>n\right]   &  =\sum_{j=1}^{d}f(j)\mathbb{E}_{\pi}\big[\tfrac
{1}{n+1}\#\big\{m\in\{0,\dots,n\}:X_{m}=j\big\}\big|T>n\big]\\
&  =\sum_{j=1}^{d}f(j)\frac{\frac{\mathrm{d}}{\mathrm{d}z}\pi_{j}(z)Q_{j}%
^{n}(z)\1 \Big|_{z=1}}{(n+1)\pi Q^{n}\1 }.
\end{align*}

\item[(ii)] If $Q$ is irreducible, then the quasi-ergodic limit is given by
\[
\lim_{n\rightarrow\infty}\mathbb{E}_{\pi}\Big[\frac{1}{n+1}\sum_{i=0}%
^{n}f(X_{i})\Big\vert T>n\Big]=\sum_{i=1}^{d}f(i)u_{i}v_{i},
\]
where $v$ and $u^{\top}$ are the right and left eigenvectors for the
eigenvalue $\rho$ of $Q$ normalised as in Proposition~\ref{P_prop1}~(ii).
\end{itemize}
\end{corollary}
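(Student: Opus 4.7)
The plan is to prove (i) by a linearity-of-expectation computation and then derive (ii) by passing to the limit in (i) using Proposition \ref{P_prop1}.

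For (i), I would decompose $f(X_i) = \sum_{j=1}^d f(j)\1_{\{X_i = j\}}$. Summing over $i$ and interchanging the (finite) order of summation gives
\[
\sum_{i=0}^n f(X_i) = \sum_{j=1}^d f(j)\,\#\bigl\{m \in \{0,\dots,n\} : X_m = j\bigr\}.
\]
Dividing by $n+1$ and applying linearity of conditional expectation yields the first equality of (i), and substituting the explicit formula from Proposition \ref{P_prop1}(i) into each of the $d$ summands then gives the second.

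For (ii), the idea is to pass the limit inside the finite sum in the first equality of (i). If $Q$ is eventually positive, Proposition \ref{P_prop1}(ii) shows that each summand converges to $f(j)u_j v_j$ with error $O(1/n)$, and a finite sum preserves this convergence, yielding $\sum_{j=1}^d f(j)u_j v_j$. The delicate case is when $Q$ is cyclic with period $h$, because Proposition \ref{P_prop1}(iii) only provides convergence along the subsequence $n = hk$; I expect this to be the main obstacle. To extend to general $n$, I would write $n = hk + r$ with $0 \le r < h$ and decompose
\[
\#\{m \le n : X_m = j\} = \#\{m \le hk : X_m = j\} + O(1),
\]
where the $O(1)$ error contributes $O(1/n)$ after the $1/(n+1)$ normalisation. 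The remaining task is to compare the conditioning on $\{T > n\}$ with the conditioning on $\{T > hk\}$: using the Markov property at time $hk$ together with the fact that $Q^h$ is eventually positive with simple dominant eigenvalue $\rho^h$, one can show that $\mathbb{P}_\pi(T > n)/\mathbb{P}_\pi(T > hk)$ is bounded away from $0$ and $\infty$ and depends asymptotically only on $r$. This lets one replace the conditional expectation under $\{T > n\}$ by that under $\{T > hk\}$ at the cost of an $O(1/n)$ perturbation, so the same limit $\sum_{j=1}^d f(j) u_j v_j$ is recovered.
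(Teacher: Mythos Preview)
Your argument for (i) is exactly the paper's: linearity of conditional expectation followed by Proposition~\ref{P_prop1}(i). For (ii) the paper's proof is a one-line appeal to Proposition~\ref{P_prop1}(ii) and (iii); you do the same in the eventually positive case, but you go further by noting that Proposition~\ref{P_prop1}(iii) only gives convergence along the subsequence $n=hk$ and sketching how to upgrade this to the full limit via the decomposition $n=hk+r$ and control of $\mathbb{P}_\pi(T>n)/\mathbb{P}_\pi(T>hk)$. The paper does not address this gap here at all (periodicity is only revisited later in Remark~\ref{Remark_periodic}), so your treatment of the cyclic case is in fact more careful than the paper's own proof.
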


\begin{proof}
(i) Using Proposition~\ref{P_prop1}~(i), one computes for fixed $n\in
\mathbb{N}$%
\begin{align*}
&  \sum_{j=1}^{d}f(j)\frac{\frac{\mathrm{d}}{\mathrm{d}z}\pi_{j}(z)Q_{j}%
^{n}(z)\1 \Big|_{z=1}}{(n+1)\pi Q^{n}\1 }\\
&  =\sum_{j=1}^{d}f(j)\mathbb{E}_{\pi}\big[\tfrac{1}{n+1}\#\big\{m\in
\{0,\dots,n\}:X_{m}=j\big\}\big|T>n\big]\\
&  =\tfrac{1}{n+1}\sum_{j=1}^{d}\mathbb{E}_{\pi}\big[f(j)\#\big\{m\in
\{0,\dots,n\}:X_{m}=j\big\}\big|T>n\big]\\
&  =\mathbb{E}_{\pi}\left[  \frac{1}{n+1}\sum_{i=0}^{n}f(X_{i}%
)\Big\vert T>n\right]  .
\end{align*}

(ii) Proposition~\ref{P_prop1}~(ii) and (iii) yield the assertion in the
irreducible case (where the matrix $Q$ is either eventually positive or cyclic).
\end{proof}

Corollary~\ref{P_Corollary1}~(ii) for irreducible $Q$ and $f(j)=j$ for $j\in
\{1,\dots,d\}$ is classical and has been proved in Darroch and
Seneta~\cite[p.~95]{DarrS65}. Here, the left eigenvector $u^{\top}$ is the
unique quasi-stationary measure, see van Doorn and Pollett \cite[Theorem~2.1]%
{vanDooPol09}. Thus, the quasi-ergodic limit is absolutely continuous with
respect to the quasi-stationary measure.

\section{Quasi-ergodic limits in the reducible case}\label{sec3}

While in the previous section, we obtained an quasi-ergodic limit formula for
irreducible matrices $Q$, we concentrate now on the reducible case, and we
suppose without loss of generality that the matrices $Q$ from
\eqref{S_stochmatrixP} are given in Frobenius normal form%
\begin{equation}
Q=%
\begin{pmatrix}
Q_{11} & 0 &  & 0\\
Q_{21} & Q_{22} &  & 0\\
&  & \ddots & \\
Q_{k1} & Q_{k2} &  & Q_{kk}%
\end{pmatrix}
\label{P_Q}%
\end{equation}
with matrices $Q_{ij}\in\mathbb{R}^{d_{i}\times d_{j}}$, where $d_{1}%
,\dots,d_{k}\in\mathbb{N}$. We assume in addition that the diagonal matrices
$Q_{ii}$ are eventually positive. The results for the general case, where the
diagonal matrices are irreducible (hence maybe periodic), are easy
consequences, see Remark~\ref{Remark_periodic} below.

We note that $\sum_{i=1}^{k}d_{i}=d$, and introduce index sets
\[
I_{j}=\big\{1+\textstyle\sum_{i=1}^{j-1}d_{i},\dots,\textstyle\sum_{i=1}%
^{j}d_{i}\big\}\quad\text{for all }\,j\in\{1,\dots,k\}\,,
\]
corresponding to the diagonal blocks of the matrix $Q$.

\subsection{Preparations and admissible paths}

\label{subsecadmpath}

In this subsection, we reformulate the quasi-ergodic problem using admissible
paths of indices. We denote the initial distribution by $\pi=(\pi_{1}%
,\dots,\pi_{k})$ with $\pi_{i}\in\mathbb{R}^{d_{i}}$ and first obtain a
version of Proposition~\ref{P_prop1} for the above systems in Frobenius normal form.

\begin{proposition}
\label{P_lemma2} Consider a matrix $Q$ of the form \eqref{P_Q}, and let
$(X_{i})_{i\in\mathbb{N}_{0}}$ be the Markov chain associated to the
substochastic matrix $Q$ starting in $\pi$. Then the following statements hold.

\begin{itemize}
\item[(i)] For $j\in\{1,\dots,d\}$ and $n\in\mathbb{N}$, the probability of
the average number of times that $X_{i}$ is in some state $j$ is
\[
\mathbb{E}_{\pi}\big[\tfrac{1}{n+1}\#\big\{m\in\{0,\dots,n\}:X_{m}%
=j\big\}\big|T>n\big]=\frac{\pi\left(  \sum_{r=0}^{n}Q^{r}e_{j}e_{j}^{\top
}Q^{n-r}\right)  \1 }{(n+1)\pi Q^{n}\1 }\,.
\]

\item[(ii)] For $\ell\in\{1,\dots,k\}$ and $n\in\mathbb{N}$, the probability
of the average number of times that $X_{i}$ is in some state in $I_{\ell}$ is
\[
\mathbb{E}_{\pi}\big[\tfrac{1}{n+1}\#\big\{m\in\{0,\dots,n\}:X_{m}\in I_{\ell
}\big\}\big|T>n\big]=\frac{\pi\left(  \sum_{r=0}^{n}Q^{r}\sum_{j\in I_{\ell}%
}e_{j}e_{j}^{\top}Q^{n-r}\right)  \1 }{(n+1)\pi Q^{n}\1 }\,.
\]

\end{itemize}
\end{proposition}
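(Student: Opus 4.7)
The plan is to derive part (i) directly from Proposition~\ref{P_prop1}~(i) by computing the derivative $\frac{\mathrm{d}}{\mathrm{d}z}\pi_{j}(z)Q_{j}^{n}(z)\1 |_{z=1}$ explicitly, and then obtain part (ii) by summing over $j\in I_{\ell}$.

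For part (i), the key observation is the simple decomposition $D_{j}(z)=I+(z-1)e_{j}e_{j}^{\top}$, which gives $D_{j}(1)=I$ and $D_{j}'(z)=e_{j}e_{j}^{\top}$ (constant in $z$). Since $\pi_{j}(z)Q_{j}^{n}(z)\1 =\pi D_{j}(z)(Q D_{j}(z))^{n}\1 $, I would apply the product rule, noting that
\[
\frac{\mathrm{d}}{\mathrm{d}z}(QD_{j}(z))^{n}=\sum_{r=0}^{n-1}(QD_{j}(z))^{r}\,Q e_{j}e_{j}^{\top}(QD_{j}(z))^{n-1-r}\,.
\]
Evaluating at $z=1$ and using $QD_{j}(1)=Q$, the derivative of the full expression becomes
\[
\pi e_{j}e_{j}^{\top}Q^{n}\1 +\pi\sum_{r=0}^{n-1}Q^{r+1}e_{j}e_{j}^{\top}Q^{n-1-r}\1 = \pi\sum_{r=0}^{n}Q^{r}e_{j}e_{j}^{\top}Q^{n-r}\1 \,,
\]
after re-indexing the second sum by $s=r+1$. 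Substituting into the formula from Proposition~\ref{P_prop1}~(i) yields (i).

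For part (ii), the identity $\#\{m\in\{0,\dots,n\}:X_{m}\in I_{\ell}\}=\sum_{j\in I_{\ell}}\#\{m\in\{0,\dots,n\}:X_{m}=j\}$, combined with linearity of conditional expectation, reduces (ii) immediately to summing (i) over $j\in I_{\ell}$ (observing that the denominator $(n+1)\pi Q^{n}\1 $ does not depend on $j$).

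There is no real obstacle here: the argument is essentially a careful application of the product rule, with the only point requiring attention being the consistent re-indexing so that the boundary term $\pi e_{j}e_{j}^{\top}Q^{n}\1 $ (arising from differentiating the leftmost $D_{j}(z)$) fills in the missing $r=0$ summand. As a sanity check, one can also recover the numerator probabilistically, since $\pi Q^{r}e_{j}e_{j}^{\top}Q^{n-r}\1 =\mathbb{P}_{\pi}(X_{r}=j,\,T>n)$ by the Markov property, and summing over $r\in\{0,\dots,n\}$ gives the expected number of visits to $j$ on the event $\{T>n\}$; dividing by $\mathbb{P}_{\pi}(T>n)=\pi Q^{n}\1 $ reproduces the claimed formula.
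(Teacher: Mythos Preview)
Your proposal is correct and follows essentially the same approach as the paper: both differentiate $\pi D_{j}(z)(QD_{j}(z))^{n}\1$ via the product rule, evaluate at $z=1$, and combine the boundary term with the re-indexed sum to obtain $\sum_{r=0}^{n}Q^{r}e_{j}e_{j}^{\top}Q^{n-r}$, then invoke Proposition~\ref{P_prop1}~(i); part~(ii) is in both cases just summation over $j\in I_{\ell}$. Your probabilistic sanity check at the end is a nice addition not present in the paper's proof, but the core argument is the same.
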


\begin{proof}
(i) Using \eqref{P_Q_j}, we compute
\begin{align*}
&  \frac{\mathrm{d}}{\mathrm{d}z}\pi_{j}(z)Q_{j}^{n}(z)\1 %
=\frac{\mathrm{d}}{\mathrm{d}z}\left(  \pi D_{j}(z)\left(  QD_{j}(z)\right)
^{n}\1 \right) \\
=  &  \pi e_{j}e_{j}^{\top}\left(  QD_{j}(z)\right)  ^{n}\1 +\pi
D_{j}(z)\left(  \sum_{r=1}^{n}\left(  QD_{j}(z)\right)  ^{r-1}Qe_{j}%
e_{j}^{\top}\left(  QD_{j}(z)\right)  ^{n-r}\right)  \1 \,.
\end{align*}
This implies that
\[
\frac{\mathrm{d}}{\mathrm{d}z}\pi_{j}(z)Q_{j}^{n}(z)\1 \Big|_{z=1}=\pi
e_{j}e_{j}^{\top}Q^{n}\1 +\pi\left(  \sum_{r=1}^{n}Q^{r}e_{j}%
e_{j}^{\top}Q^{n-r}\right)  \1 \,.
\]
Now the assertion follows from Proposition~\ref{P_prop1}~(i).

(ii) This follows from (i) and
\begin{align*}
&  \mathbb{E}_{\pi}\big[ \tfrac{1}{n+1}\#\big\{m\in\{0,\dots,n\}:X_{m}\in
I_{\ell}\big\}\big|T>n\big]\\
&  =\sum_{j\in I_{\ell}}\mathbb{E}_{\pi}\big[\tfrac{1}{n+1}\#\big\{m\in
\{0,\dots,n\}:X_{m}=j\big\}\big|T>n\big]\,,
\end{align*}
which finishes the proof of this proposition.
\end{proof}

We now aim at understanding the terms in Proposition~\ref{P_lemma2} better and
first note that for all $n\in\mathbb{N}_{0}$, we get
\[
Q^{n}=%
\begin{pmatrix}
Q_{11}^{(n)} & 0 &  & 0\\
Q_{21}^{(n)} & Q_{22}^{(n)} &  & 0\\
&  & \ddots & \\
Q_{k1}^{(n)} & Q_{k2}^{(n)} &  & Q_{kk}^{(n)}%
\end{pmatrix}
\,,
\]
where for $n\geq1$
\begin{equation}
Q_{ij}^{(n)}:=\sum_{\substack{s_{1},\dots,s_{n-1}=1,\dots,k\\i=s_{0}\geq
s_{1}\geq s_{2}\geq\cdots\geq s_{n-1}\geq s_{n}=j}}Q_{s_{0}s_{1}}Q_{s_{1}%
s_{2}}\cdots Q_{s_{n-1}s_{n}}\,, \label{P_Q_n}%
\end{equation}
and for $n=0$
\[
Q_{ij}^{(0)}:=\left\{
\begin{array}
[c]{l@{\quad:\quad}l}%
\operatorname{Id} & i=j\,,\\
0 & i\not =j\,.
\end{array}
\right.
\]
This follows by induction, since for $i\geq\ell$, the entries $Q_{i\ell
}^{(n+1)}$ of $Q^{n+1}=Q^{n}Q$ are given by
\begin{align*}
Q_{i\ell}^{(n+1)}  &  =\sum_{j=\ell}^{k}\sum_{\substack{s_{1},\dots
,s_{n-1}=1,\dots,k\\i\geq s_{1}\geq s_{2}\geq\cdots\geq s_{n-1}\geq
j}}Q_{is_{1}}Q_{s_{1}s_{2}}\cdots Q_{s_{n-1}j}Q_{j\ell}\\
&  =\sum_{\substack{s_{1},\dots,s_{n}=1,\dots,k\\i\geq s_{1}\geq s_{2}%
\geq\cdots\geq s_{n}\geq\ell}}Q_{is_{1}}Q_{s_{1}s_{2}}\cdots Q_{s_{n-1}s_{n}%
}Q_{s_{n}\ell}\,.
\end{align*}
We first consider the numerator in the formula from Proposition~\ref{P_lemma2}%
~(ii), which can be re-written as
\begin{align}
&  \allowdisplaybreaks\pi\left(  \sum_{r=0}^{n}Q^{r}\sum_{j\in I_{\ell}}%
e_{j}e_{j}^{\top}Q^{n-r}\right)  \1 \nonumber\\
&  =\allowdisplaybreaks\pi\sum_{r=0}^{n}%
\begin{pmatrix}
Q_{11}^{(r)} & 0 &  & 0\\
Q_{21}^{(r)} & Q_{22}^{(r)} &  & 0\\
&  & \ddots & \\
Q_{k1}^{(r)} & Q_{k2}^{(r)} &  & Q_{kk}^{(r)}%
\end{pmatrix}
\left(
\begin{array}
[c]{cccc}%
0 &  &  & 0\\
& \ddots &  & \\
Q_{\ell1}^{(n-r)} &  & Q_{\ell\ell}^{(n-r)} & 0\\
0 &  &  & 0
\end{array}
\right)  \1 \nonumber\\
&  \allowdisplaybreaks =\pi\sum_{r=0}^{n}\left(
\begin{array}
[c]{ccccc}%
0 &  &  &  & 0\\
& \ddots &  &  & \\
Q_{\ell\ell}^{(r)}Q_{\ell1}^{(n-r)} &  & Q_{\ell\ell}^{(r)}Q_{\ell\ell
}^{(n-r)} & 0 & 0\\
&  &  & \ddots & \\
Q_{k\ell}^{(r)}Q_{\ell1}^{(n-r)} &  & Q_{k\ell}^{(r)}Q_{\ell\ell}^{(n-r)} &  &
0
\end{array}
\right)  \1 \nonumber\\
&  \allowdisplaybreaks =(\pi_{\ell},\dots,\pi_{k})\sum_{r=0}^{n}\left(
\begin{array}
[c]{ccc}%
Q_{\ell\ell}^{(r)}Q_{\ell1}^{(n-r)} &  & Q_{\ell\ell}^{(r)}Q_{\ell\ell
}^{(n-r)}\\
& \ddots & \\
Q_{k\ell}^{(r)}Q_{\ell1}^{(n-r)} &  & Q_{k\ell}^{(r)}Q_{\ell\ell}^{(n-r)}%
\end{array}
\right)  \1 \nonumber\\
&  =\sum_{r=0}^{n}\sum_{i=\ell}^{k}\pi_{i}Q_{i\ell}^{(r)}\sum_{j=1}^{\ell
}Q_{\ell j}^{(n-r)}\1 =\sum_{i=\ell}^{k}\sum_{j=1}^{\ell}\pi_{i}%
\sum_{r=0}^{n}Q_{i\ell}^{(r)}Q_{\ell j}^{(n-r)}\1 . \label{P_3.6}%
\end{align}
We now aim at re-writing this product of certain sub-matrices of the matrix
$Q$ in a different way involving so-called admissible paths of indices.

\begin{definition}
[Admissible paths]\rule{0mm}{1mm}\vspace{-0.4cm}\newline

\begin{itemize}
\item[(i)] An \emph{admissible path }$\theta$ \emph{of length }$\kappa
=\kappa(\theta)$ is given by a finite and strictly decreasing sequence
$\theta=(\theta_{1},\theta_{2},\dots,\theta_{\kappa})$ such that $\theta
_{u}\in\{1,\dots,k\}$ and $Q_{\theta_{u}\theta_{u+1}}\not =0$ for all
$u\in\{1,\dots,\kappa-1\}$.

\item[(ii)] The \emph{set of admissible paths} is denoted by $\mathcal{P}$,
and we denote the set of admissible paths that go from $i$ to $j$ by
\[
\mathcal{P}_{ij}:=\big\{(\theta_{1},\theta_{2},\dots,\theta_{\kappa}%
)\in\mathcal{P}:\theta_{1}=i\mbox{ and }\theta_{\kappa}=j\big\}\,,
\]
and define the set of admissible paths through $\ell\in\{1,\dots,k\}$ as
\[
\mathcal{P}^{(\ell)}:=\big\{(\theta_{1},\dots,\theta_{\kappa})\in
\mathcal{P}:\mbox{ there exists a }u\in\{1,\dots,\kappa\}\text{ with }%
\theta_{u}=\ell\big\}\,.
\]

\end{itemize}
\end{definition}

We note that every finite sequence of natural numbers $s_{i}$ occurring in
non-zero products in sums of the form $Q_{i\ell}^{(r)}$ and $Q_{\ell
j}^{(n-r)}$, as defined in \eqref{P_Q_n}, must follow an admissible path. More
precisely, concentrating on $Q_{i\ell}^{(n)}$, for some $(s_{0},\dots,s_{n})$
such that $0\not =Q_{s_{0}s_{1}}Q_{s_{1}s_{2}}\cdots Q_{s_{n-1}s_{n}}$, there
exist a $\theta=(i,\theta_{2},\dots,\theta_{\kappa-1},\ell)\in\mathcal{P}%
_{i\ell}$ and exponents $\eta_{1},\dots,\eta_{\kappa}\in\mathbb{N}_{0}$ such
that $\sum_{u=1}^{\kappa}\eta_{u}=n+1-\kappa$ and
\begin{equation}
Q_{s_{0}s_{1}}Q_{s_{1}s_{2}}\cdots Q_{s_{n-1}s_{n}}=Q_{\theta_{1}\theta_{1}%
}^{\eta_{1}}Q_{\theta_{1}\theta_{2}}Q_{\theta_{2}\theta_{2}}^{\eta_{2}%
}Q_{\theta_{2}\theta_{3}}\cdots Q_{\theta_{\kappa-1}\theta_{\kappa}}%
Q_{\theta_{\kappa}\theta_{\kappa}}^{\eta_{\kappa}}\,. \label{P_Qs}%
\end{equation}
Every matrix which is subdiagonal in $Q$ occurs at most once, and for this
reason, most entries in this large matrix product are diagonal blocks
$Q_{\theta_{u}\theta_{u}}$ that are ordered with respect to $u$ and thus
appear as powers of these matrices.


The number of elements in%
\[
\Gamma_{\kappa}(m)=\big\{(\eta_{1},\dots,\eta_{\kappa})\in\mathbb{N}%
_{0}^{\kappa}:\eta_{1}+\cdots+\eta_{\kappa}=m\big\} \quad\mbox{for all }\kappa
\in\{1,\dots,k\} \mbox{ and } m\in\mathbb{N}
\]
is given by
\begin{equation}
\#\Gamma_{\kappa}(m)=\binom{\kappa+m-1}{m} \label{anzgamma}%
\end{equation}
(this is modelled by drawing $\kappa-1$ out of $m+1$ balls from an urn with
replacement and without order). For $\theta=(\theta_{1},\theta_{2}%
,\dots,\theta_{\kappa})\in\mathcal{P}$ and $m\in\mathbb{N}$, let%
\begin{equation}
Q(\theta,m):=\sum_{\eta\in\Gamma_{\kappa}(m+1-\kappa)}Q_{\theta_{1}\theta_{1}%
}^{\eta_{1}}Q_{\theta_{1}\theta_{2}}Q_{\theta_{2}\theta_{2}}^{\eta_{2}%
}Q_{\theta_{2}\theta_{3}}\cdots Q_{\theta_{\kappa-1}\theta_{\kappa}}%
Q_{\theta_{\kappa}\theta_{\kappa}}^{\eta_{\kappa}}\, \label{P_Q_theta}%
\end{equation}
and%
\[
Q(\theta,0):=\left\{
\begin{array}
[c]{l@{\quad:\quad}l}%
\operatorname{Id} & \theta\in\mathcal{P}_{ij} \mbox{, where }i=j\,,\\
0 & \theta\in\mathcal{P}_{ij} \mbox{, where } i\not =j\,.
\end{array}
\right.
\]
We use the following restrictions of $\theta=(\theta_{1},\dots,\theta_{\kappa
})\in\mathcal{P}^{(\ell)}$,
\[
\underline{\theta^{\ell}}:=(\theta_{1},\dots,\theta_{u}=\ell)\in
\mathcal{P}_{\theta_{1},\ell}\quad\text{ and }\quad\overline{\theta^{\ell}%
}:=(\theta_{u}=\ell,\dots,\theta_{\kappa})\in\mathcal{P}_{\ell,\theta_{\kappa
}}\,,
\]
and we write $\underline{\kappa}:=\underline{\kappa}(\theta,\ell):=u$ and
$\overline{\kappa}:=\overline{\kappa}(\theta,\ell):=\kappa-u+1$ for the length
of $\underline{\theta^{\ell}}$ and $\overline{\theta^{\ell}}$, respectively.
Hence, $\underline{\kappa}+\overline{\kappa}=\kappa+1$.

We obtain the following corollary to Proposition~\ref{P_lemma2}.

\begin{corollary}
\label{lemma2b} Consider a matrix $Q$ of the form \eqref{P_Q}, let
$(X_{i})_{i\in\mathbb{N}_{0}}$ be the Markov chain associated to the
substochastic matrix $Q$ starting in $\pi$, and let $\ell\in\{1,\dots,k\}$.
Then the following two statements hold.

\begin{itemize}
\item[(i)] We have
\begin{align*}
&  \mathbb{E}_{\pi}\left[  \tfrac{1}{n+1}\#\big\{m\in\{0,\dots,n\}:X_{m}\in
I_{\ell}\big\}\big|T>n\right] \\
&  \qquad\qquad=\frac{\sum_{\theta\in\mathcal{P}^{(\ell)}}\pi_{\theta_{1}}%
\sum_{r=0}^{n}Q(\underline{\theta^{\ell}},r)Q(\overline{\theta^{\ell}%
},n-r)\1 }{(n+1)\sum_{\theta\in\mathcal{P}}\pi_{\theta_{1}}%
Q(\theta,n)\1 }\,.
\end{align*}

\item[(ii)] For $s\in I_{\ell}$ we have with $t(s):=s-\sum_{i=0}^{\ell-1}%
d_{i}$ and $e_{t(s)}\in\mathbb{R}^{d_{\ell}},$%
\begin{align*}
&  \mathbb{E}_{\pi}\big[\tfrac{1}{n+1}\#\big\{m\in\{0,\dots,n\}:X_{m}%
=s\big\}\big|T>n\big]\\
&  =\frac{\sum_{\theta\in\mathcal{P}^{(\ell)}}\pi_{\theta_{1}}\sum_{r=0}%
^{n}Q(\underline{\theta^{\ell}},r)e_{t(s)}e_{t(s)}^{\top}Q(\overline
{\theta^{\ell}},n-r)\1 }{(n+1)\sum_{\theta\in\mathcal{P}}\pi
_{\theta_{1}}Q(\theta,n)\1 }.
\end{align*}

\end{itemize}
\end{corollary}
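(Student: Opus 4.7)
The plan is to carry out the admissible-path reorganisation set up in the passage around~\eqref{P_Qs}, and to apply it to the formulas of Proposition~\ref{P_lemma2}. For Corollary~\ref{lemma2b}(i), I would start from Proposition~\ref{P_lemma2}(ii), whose numerator is already re-expressed as
\[
\sum_{i=\ell}^{k}\sum_{j=1}^{\ell}\pi_{i}\sum_{r=0}^{n}Q_{i\ell}^{(r)}Q_{\ell j}^{(n-r)}\1
\]
by the block computation~\eqref{P_3.6}. The central step is the identity $Q_{ab}^{(m)}=\sum_{\theta\in\mathcal{P}_{ab}}Q(\theta,m)$, which follows from~\eqref{P_Qs}: every non-zero product in the defining sum~\eqref{P_Q_n} is uniquely encoded by the strictly decreasing subsequence $\theta\in\mathcal{P}_{ab}$ of its distinct values together with an exponent vector $\eta\in\Gamma_{\kappa(\theta)}(m+1-\kappa(\theta))$ recording the multiplicities, and summing over $\eta$ gives $Q(\theta,m)$ by~\eqref{P_Q_theta}.

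Next I would invoke the concatenation bijection
\[
\mathcal{P}_{i\ell}\times\mathcal{P}_{\ell j}\;\longleftrightarrow\;\{\theta\in\mathcal{P}^{(\ell)}:\theta_{1}=i,\,\theta_{\kappa}=j\}\,,
\]
whose inverse is $\theta\mapsto(\underline{\theta^{\ell}},\overline{\theta^{\ell}})$. Substituting the path decompositions of $Q_{i\ell}^{(r)}$ and $Q_{\ell j}^{(n-r)}$ and swapping the order of summation collapses the outer double sum over $(i,j)$ paired with $(\theta^{1},\theta^{2})$ into a single sum over $\theta\in\mathcal{P}^{(\ell)}$, yielding the claimed numerator. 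The denominator is handled by the same decomposition applied without restricting to a particular block, giving $\pi Q^{n}\1=\sum_{\theta\in\mathcal{P}}\pi_{\theta_{1}}Q(\theta,n)\1$. For Corollary~\ref{lemma2b}(ii), the same argument is applied to Proposition~\ref{P_lemma2}(i): the single-state insertion $e_{s}e_{s}^{\top}$ with $s\in I_{\ell}$ appears, after block expansion, as $e_{t(s)}e_{t(s)}^{\top}\in\mathbb{R}^{d_{\ell}\times d_{\ell}}$ sandwiched between $Q_{i\ell}^{(r)}$ and $Q_{\ell j}^{(n-r)}$, and the rest of the derivation is unchanged.

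The main technical point I expect is the careful verification of the path-concatenation bijection together with its compatibility with the definition~\eqref{P_Q_theta} in the boundary cases: when $\underline{\kappa}=1$ or $\overline{\kappa}=1$ one of the restricted paths collapses to the singleton $(\ell)$, and the $r=0$ and $r=n$ terms must behave correctly under the conventions $Q(\theta,0)=\operatorname{Id}$ for trivial paths and $0$ otherwise. A small sanity check shows that these conventions are in fact forced by~\eqref{P_Q_theta} via $\Gamma_{\kappa}(m+1-\kappa)=\emptyset$ for $m<\kappa-1$ and $\Gamma_{1}(0)=\{(0)\}$, so no separate case distinction is needed and both formulas follow by direct substitution.
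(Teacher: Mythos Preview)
Your proposal is correct and follows essentially the same route as the paper: start from Proposition~\ref{P_lemma2} and the block identity~\eqref{P_3.6}, use the path decomposition $Q_{ab}^{(m)}=\sum_{\theta\in\mathcal{P}_{ab}}Q(\theta,m)$ implicit in~\eqref{P_Qs}, and then collapse via the bijection between $\mathcal{P}_{i\ell}\times\mathcal{P}_{\ell j}$ and paths in $\mathcal{P}^{(\ell)}\cap\mathcal{P}_{ij}$, treating the denominator and part~(ii) analogously. Your explicit discussion of the boundary cases $\underline{\kappa}=1$, $\overline{\kappa}=1$ and the consistency of the $Q(\theta,0)$ convention with $\Gamma_{\kappa}(m+1-\kappa)$ is a useful clarification that the paper leaves implicit.
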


\begin{proof}
(i) First consider the denominator in Proposition~\ref{P_lemma2}~(ii). With
\eqref{P_Q_n}, we get%
\begin{align*}
&  \allowdisplaybreaks(n+1)\pi Q^{n}\1 =(n+1)\sum_{i=1}^{k}\pi_{i}%
\sum\limits_{j=1}^{i}Q_{ij}^{(n)}\1 \,\\
&  =(n+1)\sum_{i=1}^{k}\sum\limits_{j=1}^{i}\pi_{i}\sum_{i=s_{0}\geq s_{1}%
\geq\cdots\geq s_{n-1}\geq s_{n}=j}Q_{is_{1}}Q_{s_{1}s_{2}}\cdots Q_{s_{n-1}%
j}\1 \\
&  =\sum_{i=1}^{k}\sum_{j=1}^{i}(n+1)\pi_{i}\sum_{\theta\in\mathcal{P}_{ij}%
}Q(\theta,n)\1 \\
&  =(n+1)\sum\limits_{\theta\in\mathcal{P}}\pi_{\theta_{1}}Q(\theta
,n)\1 \,.
\end{align*}
Turning to the numerator we can write
\begin{equation}
\sum_{r=0}^{n}Q_{i\ell}^{(r)}Q_{\ell j}^{(n-r)}=\sum_{r=0}^{n}\Bigg(\sum
_{\theta\in\mathcal{P}_{i\ell}}Q(\theta,r)\Bigg)\Bigg(\sum_{\theta
\in\mathcal{P}_{\ell j}}Q(\theta,n-r)\Bigg)\,. \label{P_3.7}%
\end{equation}
Every admissible path $\theta\in\mathcal{P}^{(\ell)}\cap\mathcal{P}_{ij}$
corresponds to two admissible paths $\underline{\theta^{\ell}}\in
\mathcal{P}_{i\ell}$ and $\overline{\theta^{\ell}}\in\mathcal{P}_{\ell j}$.
Hence the numerator re-written in the form \eqref{P_3.6} is given by
\begin{align*}
&  \sum\limits_{i=\ell}^{k}\sum\limits_{j=1}^{\ell}\pi_{i}\sum\limits_{r=0}%
^{n}\bigg(\sum\limits_{\theta\in\mathcal{P}_{i\ell}}Q(\theta
,r)\bigg)\bigg(\sum\limits_{\theta\in\mathcal{P}_{\ell j}}Q(\theta
,n-r)\bigg)\1 \\
&  =\sum\limits_{\theta\in\mathcal{P}^{(\ell)}}\pi_{\theta_{1}}\sum
\limits_{r=0}^{n}Q(\underline{\theta^{\ell}},r)Q(\overline{\theta^{\ell}%
},n-r)\1 \,.
\end{align*}
(ii) Using Proposition~\ref{P_lemma2}~(i) and an appropriate modification of
formula \eqref{P_3.6}, one proves this analogously.
\end{proof}

\subsection{Formulas for quasi-ergodic limits\label{Section4}}

In this subsection, we determine formulas for quasi-ergodic limits for
matrices $Q$ of the form \eqref{P_Q}.

Recall that we assume that the diagonal matrices $Q_{ii}$ are eventually
positive and that the maximal eigenvalue of $Q_{ii}$ (the Perron--Frobenius
eigenvalue) is denoted by $\rho_{i}$. For $\theta=(\theta_{1},\dots
,\theta_{\kappa})\in\{1,\dots,k\}^{\kappa}$, we define $\rho(\theta
):=\max\{\rho_{\theta_{1}},\dots,\rho_{\theta_{\kappa}}\}$,
\begin{align*}
H^{+}(\theta)  &  :=\big\{u\in\{1,\dots,\kappa\}: \rho_{\theta_{u}}%
=\rho(\theta)\big \}\quad\text{ and }\\
H^{-}(\theta)  &  :=\big \{u\in\{1,\dots,\kappa\}:\rho_{\theta_{u}}%
<\rho(\theta)\big \}\,,
\end{align*}
and we denote the number of elements in these sets by $h^{+}(\theta):=\#
H^{+}(\theta)$ and $h^{-}(\theta):=\# H^{-}(\theta)$. Note that $h^{+}%
(\theta)+h^{-}(\theta) = \kappa=\kappa(\theta)$. In addition, we define
$\rho_{\max} := \max\{\rho_{1},\dots,\rho_{k}\}$ and $h^{+}_{\max}%
:=\max\{h^{+}(\theta): \theta\in\mathcal{P}\mbox{ and } \rho(\theta)=
\rho_{\max}\}$.

\begin{remark}
In the terminology of Friedland and Schneider \cite[p.~190]{FriSchn80}, if
$\rho_{i}=\rho_{\max}$, then $Q_{ii}$ determines a singular vertex of the
graph associated with $Q$, and the singular distance from $i$ to $j$ is given
by $h^{+}_{\max}-1$.
\end{remark}

We aim at quasi-ergodic limits by taking the limit $n\rightarrow\infty$ in
Corollary~\ref{lemma2b}. In the following, we will derive a few results that
help to ignore parts negligible when taking this limit. For this purpose, we
say that a real sequence $(a_{n})_{n\in\mathbb{N}}$ is \emph{asymptotically
equivalent} to another real sequence $(b_{n})_{n\in\mathbb{N}}$ in the limit
$n\rightarrow\infty$ if $\lim_{n\rightarrow\infty}\frac{a_{n}}{b_{n}}=1$.

\begin{proposition}
\label{Nprop1} Let $\theta=(\theta_{1},\dots,\theta_{\kappa})\in
\{1,\dots,k\}^{\kappa}$. Consider the sequence
\[
\xi_{n}:=\sum_{\eta\in\Gamma_{\kappa}(n+\kappa-1)}\rho_{\theta_{1}}^{\eta_{1}%
}\cdots\rho_{\theta_{\kappa}}^{\eta_{\kappa}}\quad\mbox{for all }n\in
\mathbb{N}\,.
\]
Then the sequence
\[
\rho(\theta)^{n+1-\kappa}\frac{n^{h^{+}(\theta)-1}}{(h^{+}(\theta)-1)!}%
\prod_{u\in H^{-}(\theta)}\frac{1}{1-\frac{\rho_{\theta_{u}}}{\rho(\theta)}}%
\]
is asymptotically equivalent to $(\xi_{n})_{n\in\mathbb{N}}$ for
$n\rightarrow\infty$.
\end{proposition}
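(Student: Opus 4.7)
The plan is to recognise $\xi_{n}$, up to the shift between the summation bound $m$ and $n$, as the coefficient of $z^{m}$ in the rational generating function
\[
F(z) := \prod_{u=1}^{\kappa} \frac{1}{1 - \rho_{\theta_u} z} = \frac{1}{(1 - \rho(\theta) z)^{h^{+}(\theta)}} \prod_{u \in H^{-}(\theta)} \frac{1}{1 - \rho_{\theta_u} z}\,,
\]
where the second equality groups together the $h^{+}(\theta)$ factors for which $\rho_{\theta_u} = \rho(\theta)$. Expanding each factor as a geometric series and collecting the coefficient of $z^{m}$ returns precisely $\sum_{\eta \in \Gamma_{\kappa}(m)} \prod_{u} \rho_{\theta_u}^{\eta_u}$, i.e.\ the value of $\xi_{n}$ for that $m$.

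The dominant singularity of $F$ is the pole of order $h^{+}(\theta)$ at $z_{0} := 1/\rho(\theta)$; all other poles, namely those at $1/\rho_{\theta_u}$ for $u \in H^{-}(\theta)$, lie at strictly larger modulus. Writing $F(z) = A(z) \cdot (1 - \rho(\theta) z)^{-h^{+}(\theta)}$ with $A(z) := \prod_{u \in H^{-}(\theta)} (1 - \rho_{\theta_u} z)^{-1}$ analytic at $z_{0}$, I would combine the binomial series $(1 - \rho(\theta) z)^{-h^{+}(\theta)} = \sum_{j \geq 0} \binom{j + h^{+}(\theta) - 1}{h^{+}(\theta) - 1} \rho(\theta)^{j} z^{j}$ with a Cauchy-product estimate. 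Exploiting the strict gap $\max_{u \in H^{-}(\theta)} \rho_{\theta_u} < \rho(\theta)$ (which forces the Taylor coefficients $A_{j}$ to satisfy $|A_{j}| \leq C \tilde{\rho}^{j}$ for some $\tilde{\rho} < \rho(\theta)$), a dominated-convergence argument yields
\[
[z^{m}] F(z) \sim A(z_{0}) \binom{m + h^{+}(\theta) - 1}{h^{+}(\theta) - 1} \rho(\theta)^{m} \quad \text{as } m \to \infty\,.
\]
Since $A(z_{0}) = \prod_{u \in H^{-}(\theta)} (1 - \rho_{\theta_u}/\rho(\theta))^{-1}$, $\binom{m + h^{+} - 1}{h^{+} - 1} \sim m^{h^{+} - 1}/(h^{+} - 1)!$, and $m$ differs from $n$ by a bounded constant, the claimed asymptotic equivalence follows.

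A purely combinatorial alternative, closer in spirit to the rest of the paper, rests on the factorisation
\[
\xi_{n} = \rho(\theta)^{m} \sum_{m^{-} = 0}^{m} \binom{m - m^{-} + h^{+}(\theta) - 1}{h^{+}(\theta) - 1} \sum_{\eta^{-} \in \Gamma_{h^{-}(\theta)}(m^{-})} \prod_{u \in H^{-}(\theta)} \Big(\tfrac{\rho_{\theta_u}}{\rho(\theta)}\Big)^{\eta_u},
\]
obtained by splitting each $\eta \in \Gamma_{\kappa}(m)$ into its restrictions to $H^{+}(\theta)$ and $H^{-}(\theta)$ (with total masses $m - m^{-}$ and $m^{-}$). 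Dividing by $\rho(\theta)^{m} m^{h^{+}(\theta) - 1}/(h^{+}(\theta) - 1)!$ and letting $m \to \infty$, one recognises the limit of the outer sum as $\prod_{u \in H^{-}(\theta)} (1 - \rho_{\theta_u}/\rho(\theta))^{-1}$. The main obstacle in either route is the same: one needs a uniform bound that allows exchanging limit and summation. For the combinatorial version, this is the estimate $\binom{m - m^{-} + h^{+} - 1}{h^{+} - 1} \leq \binom{m + h^{+} - 1}{h^{+} - 1}$, which holds uniformly in $m^{-}$, together with the geometric decay in $m^{-}$ of the inner sum, supplying the hypothesis for dominated convergence.
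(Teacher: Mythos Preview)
Your proposal is correct, and both routes you sketch work; they are, however, genuinely different from the paper's argument. The paper introduces the auxiliary sum $\Xi_\ell^m(\zeta_1,\dots,\zeta_\ell)=\sum_{\eta\in\Gamma_\ell(m)}\zeta_1^{\eta_1}\cdots\zeta_\ell^{\eta_\ell}$ and proves the recursion
\[
\Xi_\ell^m(\zeta_1,\dots,\zeta_\ell)=\frac{1}{1-\zeta_\ell}\,\Xi_{\ell-1}^m(\zeta_1,\dots,\zeta_{\ell-1})-\frac{\zeta_\ell^{m+1}}{1-\zeta_\ell}\,\Xi_{\ell-1}^m\Big(\tfrac{\zeta_1}{\zeta_\ell},\dots,\tfrac{\zeta_{\ell-1}}{\zeta_\ell}\Big)
\]
for $\zeta_\ell\neq 1$. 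After pulling out $\rho(\theta)^{n+1-\kappa}$ and iterating this identity (using symmetry to always peel off a sub-maximal ratio), the paper expands $\xi_n$ into up to $2^{\kappa-1}$ explicit terms of the form $\rho_{\theta_{\gamma}}^{n+2-\kappa}K\,\Xi_\beta^{n+1-\kappa}(1,\dots,1)$ and then argues that exactly one of them, carrying the full factor $\rho(\theta)^{n+2-\kappa}\Xi_{h^+(\theta)-1}^{n+1-\kappa}(1,\dots,1)\prod_{u\in H^-(\theta)}(1-\rho_{\theta_u}/\rho(\theta))^{-1}$, dominates the rest. Your generating-function approach replaces this inductive bookkeeping by a single identification of the highest-order pole of $\prod_u(1-\rho_{\theta_u}z)^{-1}$; your combinatorial approach achieves the same by splitting the $H^+$ and $H^-$ contributions in one step and invoking Tannery's theorem. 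Both of your arguments avoid the exponential-in-$\kappa$ proliferation of branches, at the cost of appealing to standard asymptotic machinery rather than computing everything explicitly; the paper's approach, in turn, is entirely elementary and exhibits all subdominant terms, which is not without interest for the later growth-rate comparisons in the paper.
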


\begin{proof}
For $\ell,m\in\mathbb{N}$ and $\zeta_{1},\dots,\zeta_{\ell}>0$, we introduce
the auxiliary function
\[
\Xi_{\ell}^{m}(\zeta_{1},\dots,\zeta_{\ell}):= \sum_{\eta\in\Gamma_{\ell}(m)
}\zeta_{1}^{\eta_{1}}\cdots\zeta_{\ell}^{\eta_{\ell}} =\sum_{\eta_{1}=0}%
^{m}\sum_{\eta_{2}=0}^{m-\eta_{1}}\dots\sum_{\eta_{\ell}=0}^{m-\eta_{1}%
-\dots-\eta_{\ell-1}}\zeta_{1}^{\eta_{1}}\cdots\zeta_{\ell}^{\eta_{\ell}}\,,
\]
and for $\zeta_{\ell}\not =1$, we can write
\begin{align}
\Xi_{\ell}^{m}(\zeta_{1},\dots,\zeta_{\ell})  &  =\sum_{\eta_{1}=0}^{m}%
\sum_{\eta_{2}=0}^{m-\eta_{1}}\dots\sum_{\eta_{\ell-1}=0}^{m-\eta_{1}%
-\dots-\eta_{\ell-2}}\zeta_{1}^{\eta_{1}}\cdots\zeta_{\ell-1}^{\eta_{\ell-1}%
}\frac{1-\zeta_{\ell}^{m+1-\eta_{1}-\dots-\eta_{\ell-1}}}{1-\zeta_{\ell}%
}\nonumber\\
&  =\frac{1}{1-\zeta_{\ell}}\Xi_{\ell-1}^{m}(\zeta_{1},\dots,\zeta_{\ell
-1})-\frac{\zeta_{\ell}^{m+1}}{1-\zeta_{\ell}}\Xi_{\ell-1}^{m}\left(
\frac{\zeta_{1}}{\zeta_{\ell}},\dots,\frac{\zeta_{\ell-1}}{\zeta_{\ell}%
}\right)  \,. \label{Xi}%
\end{align}
We note that the function $\Xi_{\ell}^{m}$ is symmetric in the sense that
$\Xi_{\ell}^{m}(\zeta_{1},\dots,\zeta_{\ell})=\Xi_{\ell}^{m}(\zeta
_{s(1)},\dots,\zeta_{s(\ell)})$ for all permutations $s$ of $1,\dots,\ell$.
For this reason, the above reformulation of $\Xi_{\ell}^{m}$ into two terms of
the form $\Xi_{\ell-1}^{m}$ can be made as long as not all $\zeta_{i}$,
$i\in\{1,\dots,\ell\}$, are equal to $1$.

We assume without loss of generality that $\theta_{\kappa}=\rho(\theta)$.
Then
\begin{align*}
\xi_{n}  &  =\sum_{\eta\in\Gamma_{\kappa}(n+1-\kappa)}\rho_{\theta_{1}}%
^{\eta_{1}}\cdots\rho_{\theta_{\kappa}}^{\eta_{\kappa}}\\
&  =\sum_{\eta_{1}=0}^{n+1-\kappa}\sum_{\eta_{2}=0}^{n+1-\kappa-\eta_{1}}%
\dots\sum_{\eta_{\kappa-1}=0}^{n+1-\kappa-\eta_{1}-\dots-\eta_{\kappa-2}}%
\rho_{\theta_{1}}^{\eta_{1}}\cdots\rho_{\theta_{\kappa-1}}^{\eta_{\kappa-1}%
}\rho(\theta)^{n+1-\kappa-\eta_{1}-\dots-\eta_{\kappa-1}}\\
&  =\rho(\theta)^{n+1-\kappa}\sum_{\eta_{1}=0}^{n+1-\kappa}\sum_{\eta_{2}%
=0}^{n+1-\kappa-\eta_{1}}\dots\sum_{\eta_{\kappa-1}=0}^{n+1-\kappa-\eta
_{1}-\dots-\eta_{\kappa-2}}\left(  \frac{\rho_{\theta_{1}}}{\rho(\theta
)}\right)  ^{\eta_{1}}\cdots\left(  \frac{\rho_{\theta_{\kappa-1}}}%
{\rho(\theta)}\right)  ^{\eta_{\kappa-1}}.
\end{align*}
Thus we have
\[
\rho(\theta)\xi_{n}=\rho(\theta)^{n+2-\kappa}\Xi_{\kappa-1}^{n+1-\kappa
}\left(  \frac{\rho_{\theta_{1}}}{\rho(\theta)},\dots,\frac{\rho
_{\theta_{\kappa-1}}}{\rho(\theta)}\right)  \quad\mbox{for all }n\in
\mathbb{N}\,.
\]
If $H^{-}(\theta)=\emptyset$, then
\[
\rho(\theta)\xi_{n}=\rho(\theta)^{n+2-\kappa}\Xi_{\kappa-1}^{n+1-\kappa
}(1,\dots,1)\,.
\]
Otherwise, we may assume that $\rho_{\theta_{\kappa-1}}<\rho(\theta)$, and
formula \eqref{Xi} yields%
\begin{align*}
\rho(\theta)\xi_{n}  &  =\rho(\theta)^{n+2-\kappa}\frac{1}{1-\frac
{\rho_{\theta_{\kappa-1}}}{\rho(\theta)}}\Xi_{\kappa-2}^{n+1-\kappa}\left(
\frac{\rho_{\theta_{1}}}{\rho(\theta) },\dots,\frac{\rho_{\theta_{\kappa-2}}%
}{\rho(\theta)}\right) \\
&  \qquad\qquad-\rho_{\theta_{\kappa-1}}^{n+2-\kappa}\frac{1}{1-\frac
{\rho_{\theta_{\kappa-1}}}{\rho(\theta)}}\Xi_{\kappa-2}^{n+1-\kappa}\left(
\frac{\rho_{\theta_{1}}}{\rho_{\theta_{\kappa-1}}},\dots,\frac{\rho
_{\theta_{\kappa-2}}}{\rho_{\theta_{\kappa-1}}}\right)  .
\end{align*}
Using the properties of the function $\Xi_{\ell}^{m}$, we can iteratively
re-write $\rho(\theta)\xi_{n}$ into at most $2^{\kappa-1}$ terms of the form
\begin{equation}
\rho_{\theta_{\gamma(i)}}^{n+2-\kappa}K(i)\Xi_{\beta(i)}^{n+1-\kappa}%
(1,\dots,1)\,,\quad\mbox{where }i\in\{1,\dots,2^{\kappa-1}\}\,,
\label{finalterm}%
\end{equation}
with $\gamma(i)\in\{1,\dots,\kappa\}$, $K(i)\in\mathbb{R}$, and $\beta
(i)\in\{0,\dots,\kappa-1\}$. In all variations of this (non-unique) iterative
procedure, one has a unique term of the form
\begin{equation}
\rho(\theta)^{n+2-\kappa}\Xi_{h^{+}(\theta)-1}^{n+1-\kappa}(1,\dots
,1)\prod_{u\in H^{-}(\theta)}\frac{1}{1-\frac{\rho_{\theta_{u}}}{\rho(\theta
)}}\,, \label{domterm}%
\end{equation}
and we show that this sequence is asymptotically equivalent to $\left(
\rho(\theta)\xi_{n}\right)  _{n\in\mathbb{N}}$.

Firstly, we note that $\frac{m^{\kappa}}{\kappa!}$ is asymptotically
equivalent to $\Xi_{\kappa}^{m}(1,\dots,1)$ for $m\to\infty$. This follows
from the fact that one can show that $\Xi_{\kappa}^{m}(1,\dots,1) =
\#\Gamma_{\kappa+1}(m)$, and we use \eqref{anzgamma}.

In addition, on the way to get to terms of the form \eqref{finalterm}, the
intermediate terms are of the form
\begin{equation}
\rho_{\theta_{\gamma}}^{n+2-\kappa}K\Xi_{\beta}^{n+1-\kappa}\left(  \frac
{\rho_{\theta_{s(1)}}}{\rho_{\theta_{\gamma}}},\dots,\frac{\rho_{\theta
_{s(\beta)}}}{\rho_{\theta_{\gamma}}}\right)  \,, \label{intermterm}%
\end{equation}
where $\gamma\in\{1,\dots,\kappa\}$, $K\in\mathbb{R}$, $\beta\in
\{0,\dots,\kappa-1\}$ and $s:\{1,\dots,\beta\}\rightarrow\{1,\dots,\kappa-1\}$
is injective. This implies that if $\rho_{\theta_{\gamma(i)}}=\rho(\theta)$ in
a final sequence \eqref{finalterm}, then, in the formulation of
\eqref{intermterm}, we have $\rho_{s(j)}=\rho(\theta)$ for all $j\in
\{1,\dots,\beta\}$. It can be seen that in all such terms that do not coincide
with \eqref{domterm}, we have $\beta<h^{+}(\theta)-1$. Hence, the term
\eqref{domterm} yields an asymptotically equivalent sequence, which finishes
the proof.
\end{proof}

In order to analyse both the denominator and numerator from
Corollary~\ref{lemma2b}, we need the following notation and elementary
statements for the diagonal blocks.

\begin{lemma}
[Notation and statements for the diagonal blocks of $Q$]\label{lemma1} For
$i\in\{1,\dots,k\}$, the normed right eigenvector of the Perron--Frobenius
eigenvalue $\rho_{i}$ of $Q_{ii}$ is denoted by $v_{i}$. Since for any
$i\in\{1,\dots,k\}$, the matrix $Q_{ii}$ is eventually positive, the absolute
value of all other eigenvalues is less than some constant $\rho_{i}^{-}%
\in(0,\rho_{i})$, and we denote by $V_{i}^{-}$ the sum of the corresponding
generalised eigenspaces, so that we have the decomposition $\mathbb{R}^{d_{i}%
}=\operatorname{span}(v_{i})\oplus V_{i}^{-}$. In the trivial case $d_{i}=1$,
we have $v_{i}=1$ and $V_{i}^{-}=\{0\}$.

\begin{itemize}
\item[(i)] We define
\[
K_{1}:=\max\big\{1,\max\big\{\|Q_{ij}\|: i>j \big\}\big\}\,.
\]

\item[(ii)] Choose $\gamma$ with
\[
\max\Big\{ \tfrac{\rho_{i}^{-}}{\rho_{i}}:i\in\{1,\dots,k\}\Big\}<\gamma<1
\,.
\]
Then there exists a constant $K_{2}\geq1$ such that for every $i\in
\{1,\dots,k\}$ and $x\in V_{i}^{-}$, we have
\[
\left\Vert Q_{ii}^{n}x\right\Vert \leq K_{2}\rho_{i}^{n}\gamma^{n}%
\|x\|\quad\text{for all }\,n\in\mathbb{N}\,.
\]

\item[(iii)] There exists a constant $K_{3}\ge1$ such that for all
$i\in\{1,\dots,k\}$ and sequences $(x_{n})_{n\in\mathbb{N}}$ in $\mathbb{R}%
^{d}$ with%
\[
x_{n}=z_{n}+w_{n} \quad\text{with }z_{n}\in\operatorname{span}(v_{i})
\mbox{ and }w_{n}\in V_{i}^{-}\,,
\]
the following holds: if for some $\zeta\in(0,1)$ and $K\geq1$, one has
$\left\Vert x_{n}\right\Vert \leq K\zeta^{n}$ for all $n\in\mathbb{N}$, then
\[
\left\Vert z_{n}\right\Vert \leq KK_{3}\zeta^{n} \quad\mbox{and}\quad
\left\Vert w_{n}\right\Vert \leq KK_{3}\zeta^{n} \quad\mbox{for all }
n\in\mathbb{N}\,.
\]

\item[(iv)] Consider an admissible path $\theta\in\mathcal{P}$. We define
$\alpha_{\theta}:=\alpha_{\theta_{\kappa}}\cdots\alpha_{\theta_{1}}$, where
the real numbers $\alpha_{\theta_{u}}$ for all $u\in\{1,\dots,\kappa\}$ are
defined by
\[
\1 =\alpha_{\theta_{\kappa}}v_{\theta_{\kappa}}+w_{\theta_{\kappa}}%
\]
with $w_{\theta_{\kappa}}\in V_{\theta_{\kappa}}^{-}$, and inductively for
$u\in\{\kappa-1,\kappa-2,\dots,1\}$ by
\[
Q_{\theta_{u}\theta_{u+1}}v_{\theta_{u+1}}=\alpha_{\theta_{u}}v_{\theta_{u}%
}+w_{\theta_{u}}%
\]
with $w_{\theta_{u}}\in V_{\theta_{u}}^{-}$. If all submatrices $Q_{\theta
_{u}\theta_{u+1}}$ are scalar, one has, writing $q_{\theta_{u}\theta_{u+1}%
}:=Q_{\theta_{u}\theta_{u+1}}$, that the constants $\alpha_{\theta_{u}}$ are
given by $\alpha_{\theta_{\kappa}}=1$ and $\alpha_{\theta_{u}}=q_{\theta
_{u}\theta_{u+1}}$ for $u\in\{\kappa-1,\kappa-2,\dots,1\}$.

\item[(v)] There exists a $K_{4}\geq1$ such that for all $i\in\{1,\dots,k\}$,
we have
\[
\Vert Q_{ii}^{n}\Vert\leq K_{4}\rho_{i}^{n}\quad\mbox{for all }n\in
\mathbb{N}\,.
\]

\end{itemize}
\end{lemma}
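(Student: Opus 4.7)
My plan is to verify the five items essentially independently, as they are elementary consequences of linear algebra applied to the spectral structure of the diagonal blocks $Q_{ii}$. Item~(i) is just a definition: $K_{1}$ is the maximum over a finite set of real numbers, so there is nothing to prove.

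For item~(ii), I would exploit that $V_i^-$ is $Q_{ii}$-invariant (as a sum of generalised eigenspaces) and that the spectral radius of the restriction $Q_{ii}|_{V_i^-}$ is at most $\rho_i^-$. Since $\rho_i\gamma > \rho_i^-$ by the choice of $\gamma$, Gelfand's spectral-radius formula produces a constant $K_2^{(i)}\geq 1$ with $\|Q_{ii}^{n}|_{V_i^-}\| \leq K_2^{(i)}(\rho_i\gamma)^n$ for all $n\in\mathbb{N}$, and I set $K_2:=\max_{i}K_2^{(i)}$, which is finite because $\{1,\dots,k\}$ is. For item~(iii), the projection $P_i$ onto $\operatorname{span}(v_i)$ along $V_i^-$ is a bounded linear operator; writing $z_n=P_ix_n$ and $w_n=(I-P_i)x_n$ yields the claim with $K_3:=\max_{i}\max\{\|P_i\|,\|I-P_i\|\}$, again uniform by finiteness of the index set.

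For item~(iv), the scalar hypothesis forces $d_{\theta_u}=1$ for every $u$ on the path, so the normalised Perron eigenvectors reduce to $v_{\theta_u}=1=\1$ and $V_{\theta_u}^-=\{0\}$. The base case $\1=v_{\theta_\kappa}$ gives $\alpha_{\theta_\kappa}=1$ with $w_{\theta_\kappa}=0$, and the inductive identity $Q_{\theta_u\theta_{u+1}}v_{\theta_{u+1}} = q_{\theta_u\theta_{u+1}}\cdot 1 = q_{\theta_u\theta_{u+1}}v_{\theta_u}$ gives $\alpha_{\theta_u}=q_{\theta_u\theta_{u+1}}$ with $w_{\theta_u}=0$. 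Item~(v) then follows by combining (ii) and (iii) with the Perron--Frobenius decomposition $\mathbb{R}^{d_i}=\operatorname{span}(v_i)\oplus V_i^-$: on $\operatorname{span}(v_i)$ the matrix $Q_{ii}$ acts as multiplication by $\rho_i$, and on $V_i^-$ the restriction is bounded by $K_2(\rho_i\gamma)^n \leq K_2\rho_i^n$ (since $\gamma<1$); splitting an arbitrary $x\in\mathbb{R}^{d_i}$ via the projection bound from (iii) and taking the max over $i$ produces a uniform $K_4:=(1+K_2)K_3$.

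There is no genuine obstacle here: the lemma is a toolkit of standard spectral estimates collected for later use, and each item reduces to a short verification. The only point that merits care is the uniformity of the constants $K_2,K_3,K_4$ across $i\in\{1,\dots,k\}$, which is immediate because the index set is finite.
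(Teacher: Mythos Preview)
Your proposal is correct and follows essentially the same route as the paper. The paper likewise treats (i) and (iv) as pure notation, derives (ii) from the spectral gap (via Seneta's expansion $Q_{ii}^{n}=\rho_i^{n}v_iu_i^{\top}+O((\rho_i^{-})^{n})$ rather than Gelfand's formula, but the content is the same), obtains (iii) from boundedness of the oblique projections onto $\operatorname{span}(v_i)$ and $V_i^{-}$ phrased as equivalence of norms, and deduces (v) from one-dimensionality of the Perron eigenspace; your explicit constant $K_4=(1+K_2)K_3$ just makes the last step quantitative.
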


\begin{proof}
(i) and (iv) concern notation and do not need to be proved. For the proof of
(ii) note that for every matrix $Q_{ii}$, Seneta~\cite[Theorem~1.2]{Sene06}
implies that%
\[
Q_{ii}^{n}=\rho_{i}^{n}v_{i}u_{i}^{\top}+\mathcal{O}\left(  ( \rho_{i}^{-})
^{n}\right)  \,,
\]
where $u_{i}^{\top}$ is the positive left eigenvector of $Q_{ii}$ for
$\rho_{i}$ with $u_{i}^{\top}v_{i}=1$. Then it follows for $x\in V_{i}^{-}$
that $v_{i}u_{i}^{\top}x=0$, since otherwise $Q_{ii}^{n}x$ would grow with
$\rho_{i}^{n}$. This implies assertion (ii). Assertion (v) is clear, since the
eigenspace to the maximal real eigenvalue $\rho_{i}$ of $Q_{ii}$ is
one-dimensional (we assumed that the matrix $Q_{ii}$ is eventually positive).
For assertion (iii), the observation below used for the spaces
$X=\operatorname{span}(v_{i})\oplus V_{i}^{-}$ yields a constant
$K_{i}^{\prime}\geq1$ for every $i\in\{1,\dots,k\}$, the maximum of which we
denote by $K_{3}\geq1$. \newline\emph{Observation.} Consider in a
finite-dimensional space $X=Z\oplus W$ a sequence $x_{n}=z_{n}+w_{n}$ with
\thinspace$z_{n}\in Z,w_{n}\in W$, and $\left\Vert x_{n}\right\Vert \leq
K\zeta^{n}$ for some $\zeta\in(0,1)$ and $K\geq1$. Then there exists a
constant $K^{\prime}\geq1$ such that $\left\Vert z_{n}\right\Vert \leq
K^{\prime}K\zeta^{n}$ and $\left\Vert w_{n}\right\Vert \leq K^{\prime}%
K\zeta^{n}$ for all $n\in\mathbb{N}$. \newline\emph{Proof of the observation.}
In fact, for a norm such that $\left\Vert x\right\Vert ^{\prime}=\left\Vert
z\right\Vert ^{\prime}+\left\Vert w\right\Vert ^{\prime}$ for $x\in X$ with
$z\in Z,w\in W$, one has $\left\Vert z_{n}\right\Vert ^{\prime}\leq$
$\left\Vert z_{n}\right\Vert ^{\prime}+$ $\left\Vert w_{n}\right\Vert
^{\prime}=$ $\left\Vert x_{n}\right\Vert ^{\prime}\leq K\gamma^{n}$,
analogously for $w_{n}$. This result remains true for every norm $\left\Vert
\cdot\right\Vert $, since all norms on finite-dimensional spaces are
equivalent. In fact, $c^{-1}\left\Vert x\right\Vert ^{\prime}\leq\left\Vert
x\right\Vert \leq c\left\Vert x\right\Vert ^{\prime}$ for some constant $c>0$,
hence $\left\Vert z_{n}\right\Vert \leq c\left\Vert z_{n}\right\Vert ^{\prime
}\leq c\left\Vert x_{n}\right\Vert ^{\prime}\leq c^{2}\left\Vert
x_{n}\right\Vert \leq K^{\prime}K\zeta^{n}$ with $K^{\prime}:=c^{2}$.
\end{proof}

In the following proposition, we aim at understanding the asymptotic growth of
sequences of the form $\pi_{\theta_{1}}Q(\theta,n)\1 $ which occur in
the denominator in Corollary~\ref{lemma2b}.

\begin{proposition}
\label{prop1} Consider a matrix $Q$ of the form \eqref{P_Q} and an admissible
path $\theta=(\theta_{1},\dots,\theta_{\kappa})\in\mathcal{P}$ and suppose
that for all $u\in\{1,\dots,\kappa\}$ with $\rho_{\theta_{u}}<\rho(\theta)$,
the diagonal term $Q_{\theta_{u}\theta_{u}}$ is scalar.

\begin{itemize}
\item[(i)] If $\pi_{\theta_{1}}v_{\theta_{1}}\not =0$ and $\alpha_{\theta
}\not =0$, then the sequence $\pi_{\theta_{1}}Q(\theta,n)\1 $ is
asymptotically equivalent in the limit $n\rightarrow\infty$ to
\begin{equation}
\pi_{\theta_{1}}v_{\theta_{1}}\alpha_{\theta}\sum_{\eta\in\Gamma_{\kappa
}(n+1-\kappa)}\rho_{\theta_{\kappa}}^{\eta_{\kappa}}\cdots\rho_{\theta_{1}%
}^{\eta_{1}}\,, \label{K_4}%
\end{equation}
and hence, due to Proposition~\ref{Nprop1}, also to the sequence
\begin{equation}
\alpha_{\theta}\pi_{\theta_{1}}v_{\theta_{1}}\rho(\theta)^{n+1-\kappa}%
\frac{n^{h^{+}(\theta)-1}}{(h^{+}(\theta)-1)!}\prod_{u\in H^{-}(\theta)}%
\frac{1}{1-\frac{\rho_{\theta_{u}}}{\rho(\theta)}}\,. \label{asequ45}%
\end{equation}

\item[(ii)] If $\pi_{\theta_{1}}v_{\theta_{1}}=0$ or $\alpha_{\theta}=0$, then
the exponential growth of $\pi_{\theta_{1}}Q(\theta,n)\1 $ in the limit
$n\rightarrow\infty$ is equal to or less than $\rho(\theta)^{n+1-\kappa
}n^{h^{+}(\theta)-2}$.
\end{itemize}
\end{proposition}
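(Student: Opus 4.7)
The plan is to expand $\pi_{\theta_1} Q(\theta, n)\1$ by applying the matrices in each summand from right to left to $\1$, tracking at every step the decomposition of the intermediate vector into its $v_{\theta_u}$-component and a remainder in $V_{\theta_u}^-$. The constant $\alpha_\theta$ will arise as the accumulated coefficient of $v_{\theta_1}$ after all $\kappa$ steps, and the remainders, by Lemma \ref{lemma1}(ii), will acquire geometric decay factors $\gamma^{\eta_u}$ that ultimately render them subdominant.

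Concretely, we prove by backwards induction on $u \in \{\kappa, \kappa-1, \dots, 1\}$ that for each $\eta \in \Gamma_\kappa(n+1-\kappa)$,
\begin{equation*}
Q_{\theta_u \theta_u}^{\eta_u} Q_{\theta_u \theta_{u+1}} \cdots Q_{\theta_\kappa \theta_\kappa}^{\eta_\kappa}\1
= \Bigl(\prod_{v=u}^{\kappa}\alpha_{\theta_v}\Bigr)\Bigl(\prod_{v=u}^{\kappa}\rho_{\theta_v}^{\eta_v}\Bigr) v_{\theta_u} + e_u(\eta)\,,
\end{equation*}
with a uniform bound $\|e_u(\eta)\| \le C \bigl(\prod_{v=u}^{\kappa}\rho_{\theta_v}^{\eta_v}\bigr)\sum_{v \ge u,\, v \in H^+(\theta)} \gamma^{\eta_v}$ for a constant $C=C(\theta)$. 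The base case uses $\1 = \alpha_{\theta_\kappa} v_{\theta_\kappa} + w_{\theta_\kappa}$ from Lemma \ref{lemma1}(iv) combined with Lemma \ref{lemma1}(ii); note that $w_{\theta_\kappa} = 0$ when $\kappa \in H^-(\theta)$, since the hypothesis forces $d_{\theta_\kappa} = 1$. The inductive step applies $Q_{\theta_{u-1}\theta_u} v_{\theta_u} = \alpha_{\theta_{u-1}} v_{\theta_{u-1}} + w_{\theta_{u-1}}$ together with Lemma \ref{lemma1}(i), (ii), (v): for $u-1 \in H^-(\theta)$ the scalar block $Q_{\theta_{u-1}\theta_{u-1}}^{\eta_{u-1}} = \rho_{\theta_{u-1}}^{\eta_{u-1}}$ generates no new error, while for $u-1 \in H^+(\theta)$ any new $V_{\theta_{u-1}}^-$-component acquires a factor $\gamma^{\eta_{u-1}}$, so the inductive form of the bound propagates.

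Multiplying by $\pi_{\theta_1}$ and summing over $\eta \in \Gamma_\kappa(n+1-\kappa)$ then yields
\begin{equation*}
\pi_{\theta_1} Q(\theta,n)\1 = \alpha_\theta\, \pi_{\theta_1} v_{\theta_1}\, \xi_n + R_n\,,
\end{equation*}
with $R_n$ bounded by a finite sum, indexed by $v \in H^+(\theta)$, of terms $\sum_\eta \gamma^{\eta_v} \prod_u \rho_{\theta_u}^{\eta_u}$. Each such term has the form analysed in Proposition \ref{Nprop1}, but with $\rho_{\theta_v}$ replaced by $\gamma\rho(\theta) < \rho(\theta)$: if $h^+(\theta) \ge 2$ the sum grows like $\rho(\theta)^{n+1-\kappa} n^{h^+(\theta)-2}$, and if $h^+(\theta)=1$ its exponential rate lies strictly below $\rho(\theta)$. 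Thus $R_n$ is of strictly smaller order than $\xi_n$. Part (i) then follows when $\alpha_\theta \pi_{\theta_1} v_{\theta_1} \ne 0$: the displayed identity together with $R_n = o(\xi_n)$ gives asymptotic equivalence to \eqref{K_4}, hence to \eqref{asequ45} via Proposition \ref{Nprop1}. Part (ii) follows because the principal term then vanishes, leaving only $R_n$. The main obstacle is carrying out the inductive error bookkeeping uniformly in $\eta$ and in a form clean enough to be summed against $\Gamma_\kappa(n+1-\kappa)$; the scalar hypothesis on $Q_{\theta_u\theta_u}$ for $u \in H^-(\theta)$ is exactly what keeps the error supported on $H^+(\theta)$ and produces the drop from $n^{h^+(\theta)-1}$ to $n^{h^+(\theta)-2}$.
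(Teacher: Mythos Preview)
Your proposal is correct and follows essentially the same approach as the paper: both expand $Q(\theta,n)\1$ from right to left, track at each step the decomposition into the $v_{\theta_u}$-component and a remainder in $V_{\theta_u}^-$, use the scalar hypothesis to ensure no error is generated at indices $u\in H^-(\theta)$, and then invoke Proposition~\ref{Nprop1} with one $\rho_{\theta_v}$ replaced by $\gamma\rho(\theta)$ to show the accumulated remainder is of order at most $\rho(\theta)^{n+1-\kappa}n^{h^+(\theta)-2}$. The only cosmetic difference is that you package the argument as a backwards induction with a single uniform error bound $\|e_u(\eta)\|\le C\bigl(\prod_{v\ge u}\rho_{\theta_v}^{\eta_v}\bigr)\sum_{v\ge u,\,v\in H^+(\theta)}\gamma^{\eta_v}$, whereas the paper writes out the first two steps in full detail and then states that after $\kappa$ steps one obtains up to $2^{h^+(\theta)}-1$ error summands, each carrying some factor $\gamma^{\eta_u}$ with $u\in H^+(\theta)$.
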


\begin{proof}
Due to \eqref{P_Q_theta}, we have
\begin{equation}
\pi_{\theta_{1}}Q(\theta,n)\1 =\pi_{\theta_{1}}\,\sum_{\eta\in
\Gamma_{\kappa}(n+1-\kappa)}Q_{\theta_{1}\theta_{1}}^{\eta_{1}}Q_{\theta
_{1}\theta_{2}}Q_{\theta_{2}\theta_{2}}^{\eta_{2}}Q_{\theta_{2}\theta_{3}%
}\cdots Q_{\theta_{\kappa-1}\theta_{\kappa}}Q_{\theta_{\kappa}\theta_{\kappa}%
}^{\eta_{\kappa}}\1 \,. \label{N01}%
\end{equation}
Since we assume that for $\rho_{i}<\rho(\theta)$, the diagonal term $Q_{ii}$
is scalar, it follows that for $u\in H^{-}(\theta)$, the decomposition
$Q_{\theta_{u}\theta_{u+1}}v_{\theta_{u+1}}=\alpha_{\theta_{u}}v_{\theta_{u}%
}+w_{\theta_{u}}$ from Lemma~\ref{lemma1}~(iv) is scalar, and hence using
$v_{\theta_{u}}=1$ and $w_{\theta_{u}}=0$, it is of the form%
\begin{equation}
Q_{\theta_{u}\theta_{u+1}}v_{\theta_{u+1}}=\alpha_{\theta_{u}}\,. \label{rel1}%
\end{equation}
Consider the first iterative step from Lemma~\ref{lemma1}~(iv)
\begin{equation}
\1 =\alpha_{\theta_{\kappa}}v_{\theta_{\kappa}}+w_{\theta_{\kappa}}
\label{N0}%
\end{equation}
with $w_{\theta_{\kappa}}\in V_{\theta_{\kappa}}^{-}$. Using that
$\rho_{\theta_{\kappa}}$ is an eigenvalue of $Q_{\theta_{\kappa}\theta
_{\kappa}}$ with eigenvector $v_{\theta_{\kappa}}$, we get
\begin{equation}
Q_{\theta_{\kappa}\theta_{\kappa}}^{\eta_{\kappa}}\1 =\alpha
_{\theta_{\kappa}}\rho_{\theta_{\kappa}}^{\eta_{\kappa}}v_{\theta_{\kappa}%
}+Q_{\theta_{\kappa}\theta_{\kappa}}^{\eta_{\kappa}}w_{\theta_{\kappa}}.
\label{N2}%
\end{equation}
In the next step, we decompose%
\begin{equation}
Q_{\theta_{\kappa-1}\theta_{\kappa}}v_{\theta_{\kappa}}=\alpha_{\theta
_{\kappa-1}}v_{\theta_{\kappa-1}}+w_{\theta_{\kappa-1}}\text{ } \label{N3}%
\end{equation}
with $\alpha_{\theta_{\kappa-1}}\in\mathbb{R}$ and $w_{\theta_{\kappa-1}}\in
V_{\theta_{\kappa-1}}^{-}$. Hence, Lemma~\ref{lemma1}~(ii) implies
\begin{equation}
\big\|Q_{\theta_{\kappa-1}\theta_{\kappa-1}}^{\eta_{\kappa-1}}w_{\theta
_{\kappa-1}}\big\|\leq K_{2}\rho_{\theta_{\kappa-1}}^{\eta_{\kappa-1}}%
\gamma^{\eta_{\kappa-1}}\Vert w_{\theta_{\kappa-1}}\Vert\,, \label{N3a}%
\end{equation}
and we decompose%
\begin{equation}
Q_{\theta_{\kappa-1}\theta_{\kappa}}Q_{\theta_{\kappa}\theta_{\kappa}}%
^{\eta_{\kappa}}w_{\theta_{\kappa}}=\beta_{\theta_{\kappa-1}}^{(\eta_{\kappa
})}v_{\theta_{\kappa-1}}+w_{\theta_{\kappa-1}}^{(\eta_{\kappa})} \label{N4}%
\end{equation}
with $\beta_{\theta_{\kappa-1}}^{(\eta_{\kappa})}\in\mathbb{R}$ and
$w_{\theta_{\kappa-1}}^{(\eta_{\kappa})}\in V_{\theta_{\kappa-1}}^{-}$. Due to
Lemma~\ref{lemma1}~(i),(ii), the left hand side of \eqref{N4} satisfies
\[
\left\Vert Q_{\theta_{\kappa-1}\theta_{\kappa}}Q_{\theta_{\kappa}%
\theta_{\kappa}}^{\eta_{\kappa}}w_{\theta_{\kappa}}\right\Vert \leq
K_{1}\left\Vert Q_{\theta_{\kappa}\theta_{\kappa}}^{\eta_{\kappa}}%
w_{\theta_{\kappa}}\right\Vert \leq K_{1}K_{2}\rho_{\theta_{\kappa}}%
^{\eta_{\kappa}}\gamma^{\eta_{\kappa}}\left\Vert w_{\theta_{\kappa}%
}\right\Vert .
\]
This implies for the right hand side of \eqref{N4} by Lemma~\ref{lemma1}~(iii)
that%
\begin{equation}
\big\|w_{\theta_{\kappa-1}}^{(\eta_{\kappa})}\big\|\leq K_{1}K_{2}K_{3}%
\rho_{\theta_{\kappa}}^{\eta_{\kappa}}\gamma^{\eta_{\kappa}}\left\Vert
w_{\theta_{\kappa}}\right\Vert \label{N5}%
\end{equation}
and%
\begin{equation}
\big|\beta_{\theta_{\kappa-1}}^{(\eta_{\kappa})}\big|=\big\|\beta
_{\theta_{\kappa-1}}^{(\eta_{\kappa})}v_{_{\theta_{\kappa-1}}}\big\|\leq
K_{1}K_{2}K_{3}\rho_{\theta_{\kappa}}^{\eta_{\kappa}}\gamma^{\eta_{\kappa}%
}\left\Vert w_{\theta_{\kappa}}\right\Vert . \label{N6}%
\end{equation}
Together this yields%
\begin{eqnarray*}
& & \!\!\!Q_{\theta_{\kappa-1}\theta_{\kappa-1}}^{\eta_{\kappa-1}}%
Q_{\theta_{\kappa-1}\theta_{\kappa}}Q_{\theta_{\kappa}\theta_{\kappa}}%
^{\eta_{\kappa}}\1 \\
&  \overset{\eqref{N2}}{=}&\!\!\!Q_{\theta_{\kappa-1}\theta_{\kappa-1}}%
^{\eta_{\kappa-1}}Q_{\theta_{\kappa-1}\theta_{\kappa}}\left(  \alpha
_{\theta_{\kappa}}\rho_{\theta_{\kappa}}^{\eta_{\kappa}}v_{_{\theta_{\kappa}}%
}+Q_{\theta_{\kappa}\theta_{\kappa}}^{\eta_{\kappa}}w_{\theta_{\kappa}}\right)
\\
&  =&\!\!\!\alpha_{\theta_{\kappa}}\rho_{\theta_{\kappa}}^{\eta_{\kappa}%
}Q_{\theta_{\kappa-1}\theta_{\kappa-1}}^{\eta_{\kappa-1}}Q_{\theta_{\kappa
-1}\theta_{\kappa}}v_{\theta_{\kappa}}+Q_{\theta_{\kappa-1}\theta_{\kappa-1}%
}^{\eta_{\kappa-1}}Q_{\theta_{\kappa-1}\theta_{\kappa}}Q_{\theta_{\kappa
}\theta_{\kappa}}^{\eta_{\kappa}}w_{\theta_{\kappa}}\\
&  \overset{\eqref{N3},\eqref{N4}}{=}& \!\!\!\alpha_{\theta_{\kappa}}%
\rho_{\theta_{\kappa}}^{\eta_{\kappa}}Q_{\theta_{\kappa-1}\theta_{\kappa-1}%
}^{\eta_{\kappa-1}}\left(  \alpha_{\theta_{\kappa-1}}v_{\theta_{\kappa-1}%
}+w_{\theta_{\kappa-1}}\right) \\
& & \!\!\!+Q_{\theta_{\kappa-1}\theta_{\kappa-1}}^{\eta_{\kappa-1}}\left(
\beta_{\theta_{\kappa-1}}^{(\eta_{\kappa})}v_{_{\theta_{\kappa-1}}}%
+w_{\theta_{\kappa-1}}^{(\eta_{\kappa})}\right) \\
&  =&\!\!\!\alpha_{\theta_{\kappa}}\alpha_{\theta_{\kappa-1}}\rho
_{\theta_{\kappa}}^{\eta_{\kappa}}Q_{\theta_{\kappa-1}\theta_{\kappa-1}}%
^{\eta_{\kappa-1}}v_{\theta_{\kappa-1}}+\alpha_{\theta_{\kappa}}\rho
_{\theta_{\kappa}}^{\eta_{\kappa}}Q_{\theta_{\kappa-1}\theta_{\kappa-1}}%
^{\eta_{\kappa-1}}w_{\theta_{\kappa-1}}\\
& & \!\!\!+Q_{\theta_{\kappa-1}\theta_{\kappa-1}}^{\eta_{\kappa-1}}%
\beta_{\theta_{\kappa-1}}^{(\eta_{\kappa})}v_{\theta_{\kappa-1}}%
+Q_{\theta_{\kappa-1}\theta_{\kappa-1}}^{\eta_{\kappa-1}}w_{\theta_{\kappa-1}%
}^{(\eta_{\kappa})}\\
&  =&\!\!\!\alpha_{\theta_{\kappa}}\alpha_{\theta_{\kappa-1}}\rho
_{\theta_{\kappa}}^{\eta_{\kappa}}\rho_{\theta_{\kappa-1}}^{\eta_{\kappa-1}%
}v_{\theta_{\kappa-1}}+\alpha_{\theta_{\kappa}}\rho_{\theta_{\kappa}}%
^{\eta_{\kappa}}Q_{\theta_{\kappa-1}\theta_{\kappa-1}}^{\eta_{\kappa-1}%
}w_{\theta_{\kappa-1}}\\
& & \!\!\!+\beta_{\theta_{\kappa-1}}^{(\eta_{\kappa})}\rho_{\theta_{\kappa-1}%
}^{\eta_{\kappa-1}}v_{\theta_{\kappa-1}}+Q_{\theta_{\kappa-1}\theta_{\kappa
-1}}^{\eta_{\kappa-1}}w_{\theta_{\kappa-1}}^{(\eta_{\kappa})}.
\end{eqnarray*}
The last three summands satisfy the estimates
\begin{eqnarray*}
\big\|\alpha_{\theta_{\kappa}}\rho_{\theta_{\kappa}}^{\eta_{\kappa}}%
Q_{\theta_{\kappa-1}\theta_{\kappa-1}}^{\eta_{\kappa-1}}w_{\theta_{\kappa-1}%
}\big\| \!\!\!  &  \leq &\!\!\!\left\vert \alpha_{\theta_{\kappa}}\right\vert
\rho_{\theta_{\kappa}}^{\eta_{\kappa}}\big\|Q_{\theta_{\kappa-1}\theta
_{\kappa-1}}^{\eta_{\kappa-1}}w_{_{\theta_{\kappa-1}}}\big\|\\
\!\!\!  &  \overset{\eqref{N3a}}{\leq} &\!\!\!K_{2}\left\vert \alpha
_{\theta_{\kappa}}\right\vert \rho_{\theta_{\kappa}}^{\eta_{\kappa}}%
\rho_{\theta_{\kappa-1}}^{\eta_{\kappa-1}}\gamma^{\eta_{\kappa-1}}\Vert
w_{\theta_{\kappa-1}}\Vert
\end{eqnarray*}
and
\[
\big\|\beta_{\theta_{\kappa-1}}^{(\eta_{\kappa})}\rho_{\theta_{\kappa-1}%
}^{\eta_{\kappa-1}}v_{\theta_{\kappa-1}}\big\|\leq\big|\beta_{\theta
_{\kappa-1}}^{(\eta_{\kappa})}\big|\rho_{\theta_{\kappa-1}}^{\eta_{\kappa-1}%
}\Vert v_{\theta_{\kappa-1}}\Vert\overset{\eqref{N6}}{\leq}K_{1}K_{2}K_{3}%
\rho_{\theta_{\kappa}}^{\eta_{\kappa}}\rho_{\theta_{\kappa-1}}^{\eta
_{\kappa-1}}\gamma^{\eta_{\kappa}}\left\Vert w_{\theta_{\kappa}}\right\Vert
\]
and
\begin{eqnarray*}
\big\|Q_{\theta_{\kappa-1}\theta_{\kappa-1}}^{\eta_{\kappa-1}}w_{\theta
_{\kappa-1}}^{(\eta_{\kappa})}\big\|\!\!\!  &  \leq &\!\!\!\big\|Q_{\theta
_{\kappa-1}\theta_{\kappa-1}}^{\eta_{\kappa-1}}\big\|\big\|w_{\theta
_{\kappa-1}}^{(\eta_{\kappa})}\big\|\overset{\text{Lemma~\ref{lemma1}~(v)}%
}{\leq} K_{4}\rho_{\theta_{\kappa-1}}^{\eta_{\kappa-1}}\big\|w_{\theta
_{\kappa-1}}^{(\eta_{\kappa})}\big\|\\
\!\!\!  &  \overset{\eqref{N5}}{\leq} &\!\!\!K_{1}K_{2}K_{3}K_{4}\rho
_{\theta_{\kappa}}^{\eta_{\kappa}}\rho_{\theta_{\kappa-1}}^{\eta_{\kappa-1}%
}\gamma^{\eta_{\kappa}}\left\Vert w_{\theta_{\kappa}}\right\Vert \,.
\end{eqnarray*}
If $\kappa\not \in H^{+}(\theta)$, then by \eqref{rel1}, it follows that
$w_{\theta_{\kappa}}=0$, and hence, of the last three summands above, only%
\[
\alpha_{\theta_{\kappa}}\rho_{\theta_{\kappa}}^{\eta_{\kappa}}Q_{\theta
_{\kappa-1}\theta_{\kappa-1}}^{\eta_{\kappa-1}}w_{\theta_{\kappa-1}}%
\]
can be different from $0$. If $\kappa-1\not \in H^{+}(\theta)$, then by
\eqref{rel1}, it follows that $w_{\theta_{\kappa-1}}=0$, and hence, this
summand vanishes. Together with the estimates derived above, it follows that
each of the additional three summands vanishes, if both $\kappa,\kappa
-1\not \in H^{+}(\theta)$, and the norm of each of the additional summands can
be estimated by a constant multiplied with
\begin{align*}
\rho_{\theta_{\kappa}}^{\eta_{\kappa}}\rho_{\theta_{\kappa-1}}^{\eta
_{\kappa-1}}\gamma^{\eta_{\kappa}}\qquad\text{ if }\kappa &  \in H^{+}%
(\theta)\text{ and }\kappa-1\not \in H^{+}(\theta)\,,\\
\rho_{\theta_{\kappa}}^{\eta_{\kappa}}\rho_{\theta_{\kappa-1}}^{\eta
_{\kappa-1}}\gamma^{\eta_{\kappa-1}}\qquad\text{ if }\kappa &  \not \in
H^{+}(\theta)\text{ and }\kappa-1\in H^{+}(\theta)\,,\\
\rho_{\theta_{\kappa}}^{\eta_{\kappa}}\rho_{\theta_{\kappa-1}}^{\eta
_{\kappa-1}}\gamma^{\eta_{\kappa}}\text{ or }\rho_{\theta_{\kappa}}%
^{\eta_{\kappa}}\rho_{\theta_{\kappa-1}}^{\eta_{\kappa-1}}\gamma^{\eta
_{\kappa-1}}\qquad\text{ if }\kappa &  \in H^{+}(\theta)\text{ and }%
\kappa-1\in H^{+}(\theta)\,.
\end{align*}
After $\kappa$ decomposition steps, we arrive at the following result: any
term in the sum in \eqref{N01} (i.e.~for a fixed $\eta\in\Gamma_{\kappa
}(n+1-\kappa)$) is equal to the sum of%
\begin{equation}
\pi_{\theta_{1}}\alpha_{\theta}\rho_{\theta_{\kappa}}^{\eta_{\kappa}}%
\cdots\rho_{\theta_{1}}^{\eta_{1}}v_{\theta_{1}}=\alpha_{\theta}\pi
_{\theta_{1}}v_{\theta_{1}}\rho_{\theta_{\kappa}}^{\eta_{\kappa}}\cdots
\rho_{\theta_{1}}^{\eta_{1}} \label{Ndominant_denominator}%
\end{equation}
and up to $2^{h^{+}(\theta)}-1$ summands that contain in addition to the
factor $\rho_{\theta_{\kappa}}^{\eta_{\kappa}}\cdots\rho_{\theta_{1}}%
^{\eta_{1}}$ some factor $\gamma^{\eta_{u}}$ with $u\in H^{+}(\theta)$, hence,
$\gamma\rho_{\theta_{u}}< \rho(\theta)$. It follows from
Proposition~\ref{Nprop1} that these additional terms have in the limit
$n\rightarrow\infty$ exponential growth equal to or less than $\rho
(\theta)^{n+1-\kappa}n^{h^{+}(\theta)-2}$. Hence for $\pi_{\theta_{1}}%
\alpha_{\theta}v_{\theta_{1}}\not =0$, the sequences in
\eqref{Ndominant_denominator} and \eqref{N01} are asymptotically equivalent,
and assertion (i) holds. If $\pi_{\theta_{1}}\alpha_{\theta}v_{\theta_{1}}=0$
the terms in \eqref{Ndominant_denominator} vanish and the estimate for the
exponential growth of the other summands implies that (ii) holds.
\end{proof}

\begin{remark}
\label{genericity_alpha}If for an admissible path $\theta\in\mathcal{P}$, all
submatrices $Q_{\theta_{u}\theta_{u+1}}$ are scalar, the assumption
$\alpha_{\theta}=\alpha_{\theta_{\kappa}}\cdots\alpha_{\theta_{1}}\not =0$ in
Proposition~\ref{prop1} holds. In the general case, it is generically
satisfied: for matrices $Q$ of the form \eqref{P_Q}, recall that $v_{i}$
denotes the Perron--Frobenius eigenvector of the diagonal block $Q_{ii}$ for
$i\in\{1,\dots,k\}$. Then the set of matrices $Q$ such that the decomposition
$\1 =\alpha_{k}v_{k}+w_{k}$ with $\alpha_{k}\in\mathbb{R}$ and
$w_{k}\in V_{k}^{-}$ satisfies $\alpha_{k}\not =0$, is open and dense.
Similarly, for all $i,j\in\{1,\dots,k\}$ with $i>j$, the set of matrices such
that the decomposition
\[
Q_{ij}v_{j}=\alpha_{ij}v_{i}+w_{ij}\text{ with }\alpha_{ij}\in\mathbb{R}\text{
and }w_{ij}\in V_{i}^{-},
\]
satisfies $\alpha_{ij}\not =0$, is open and dense. This implies (cf.~Lemma
\ref{lemma1}~(iv)) that the set of matrices $Q$ such that for every admissible
path $\theta\in\mathcal{P}$, all numbers $\alpha_{\theta_{\kappa}}%
,\dots,\alpha_{\theta_{1}}$ are nonzero, is open and dense.
\end{remark}

The following two examples further illustrate the assumptions of Proposition
\ref{prop1}.

\begin{example}
We demonstrate now that the assumption $\alpha_{\theta}=\alpha_{\theta
_{\kappa}}\cdots\alpha_{\theta_{1}}\not =0$ in Proposition~\ref{prop1} (i) is
not satisfied in general. Consider the matrix
\[
Q=%
\begin{pmatrix}
Q_{11} & 0\\
Q_{21} & Q_{22}%
\end{pmatrix}
\,,\quad\mbox{where }Q_{11},Q_{21},Q_{22}\in\mathbb{R}^{n\times n}.
\]
Suppose that $Q_{11}$ and $Q_{22}$ are eventually positive with
Perron--Frobenius eigenvalues $\rho_{1}$ and $\rho_{2}$ with normalised
positive right eigenvectors $v_{1}$ and $v_{2}$, respectively. Furthermore,
let $V_{2}^{-}\subset\mathbb{R}^{n}$ be the subspace spanned by the
eigenvectors of $Q_{22}$ corresponding to the eigenvalues with smaller
magnitude than $\rho_{2}$, as defined in Lemma~\ref{lemma1}. Suppose that
$Q_{21}$ is a matrix and $Q_{21}v_{1}\in V_{2}^{-}$. For $\theta=(\theta
_{1},\theta_{2})=(2,1)$ and $\kappa=2$, one has $v_{\theta_{\kappa}}=v_{1}$
and $v_{\theta_{\kappa-1}}=v_{2}$. Then it follows that $\alpha_{\theta
_{\kappa-1}}=\alpha_{2}=0$, since due to \eqref{N3}, we have
\[
Q_{21}v_{1}=0\cdot v_{2}+w_{2}\,,\quad\text{ with }w_{2}\in V_{2}^{-}\,.
\]

\end{example}

We now take a closer look at the assumption requiring certain diagonal terms
to be scalar.

\begin{example}
\label{counterexample1}In Proposition~\ref{prop1}, the assumption that the
diagonal term $Q_{\theta_{u}\theta_{u}}$ is scalar for all $u\in
\{1,\dots,\kappa\}$ with $\rho_{\theta_{u}}<\rho(\theta)$, is necessary and
cannot be omitted in general. Let $d=3$ and consider a matrix of the form
\[
Q=%
\begin{pmatrix}
\rho_{1} & 0\\
Q_{21} & Q_{22}%
\end{pmatrix}
\,,
\]
where $Q_{22}\in\mathbb{R}^{2\times2}$ is eventually positive with the simple
eigenvalues $\rho_{2}>\rho_{2}^{-}>0$ and $\rho_{1}>\rho_{2}$. Here $\rho
_{1}=\rho(\theta)$ and $Q_{11}= (\rho_{1})$, and we assume that $Q_{21}%
=(q_{21},q_{31})^{\top}$ has positive entries, and hence, the path
$\theta=(\theta_{1},\theta_{2})=(2,1)$ with $\kappa=2$ is admissible. Then
$H^{+}(\theta)=\{1\}$, $H^{-}(\theta)=\{2\}$ and%
\[
\theta_{\kappa}=\theta_{2}=1 \quad\text{and} \quad\theta_{\kappa-1}=\theta
_{1}=2\,.
\]
The eigenvalue of $Q_{11}$ is $\rho_{1}=\rho(\theta)$ with normed eigenvector
$1\in\mathbb{R}$. The decomposition~\eqref{N0} reads as $\1 %
=\alpha_{\theta_{\kappa}}v_{\theta_{\kappa}}+w_{\theta_{\kappa}}$ in
$\mathbb{R}$ with $\theta_{\kappa}=\theta_{2}=1$, and we get $\alpha
_{\theta_{\kappa}}=\alpha_{1}=\rho_{1}$, $v_{\theta_{\kappa}}=v_{1}=1$ and
$w_{\theta_{\kappa}}=w_{1}=0$. The matrix $Q_{22}$ has normalised eigenvectors
$v_{2}$ for $\rho_{2}$ and $v_{2}^{-}$ for $\rho_{2}^{-}$. Thus the subspace
$V_{2}^{-}$ is spanned by the eigenvector $v_{2}^{-}$, and hence, the
decomposition \eqref{N3} in $\mathbb{R}^{2}$ has the form%
\[
Q_{21}1=
\begin{pmatrix}
q_{21}\\
q_{31}%
\end{pmatrix}
=\alpha_{2}v_{2}+w_{2}=\alpha_{2}v_{2}+cv_{2}^{-}\quad\text{with }cv_{2}%
^{-}\in V_{2}^{-}\text{ for some }c\in\mathbb{R}.
\]
We further assume that $w_{2}\not =0$, and hence $c\not =0$. The decomposition
\eqref{N4} is trivial, since $w_{\theta_{\kappa}}=w_{1}=0$, showing that
$\beta_{\theta_{\kappa-1}}^{(\eta_{\kappa})}=0$ and $w_{\theta_{\kappa-1}%
}^{(\eta_{\kappa})}=0$. Together, this yields the formula%
\[
Q_{22}^{\eta_{2}}Q_{21}Q_{11}^{\eta_{1}}1=\rho_{1}\alpha_{2}\rho_{1}^{\eta
_{2}}\rho_{2}^{\eta_{1}}v_{2}+\rho_{1}\rho_{1}^{\eta_{2}}Q_{22}^{\eta_{1}%
}w_{2}=\rho_{1}\alpha_{2}\rho_{1}^{\eta_{2}}\rho_{2}^{\eta_{1}}v_{2}+\rho
_{1}\rho_{1}^{\eta_{2}}c\left(  \rho_{2}^{-}\right)  ^{\eta_{1}}v_{2}^{-}.
\]
Now we sum the right hand side over all $\eta=(\eta_{1},\eta_{2})\in
\mathbb{N}_{0}^{2}$ with $\eta_{1}+\eta_{2}=n+1-\kappa=n-1$. This gives for
the first summand of the right hand side
\begin{align*}
&  \sum\limits_{\eta\in\Gamma_{2}(n-1)}\rho(\theta)\alpha_{2}\rho^{\eta_{2}%
}\rho_{2}^{\eta_{1}}v_{2} =\rho(\theta)\alpha_{2}\sum_{\eta_{1}=0}^{n-1}%
\rho(\theta) ^{n-1-\eta_{1}}\rho_{2}^{\eta_{1}}v_{2}\\
&  =\rho(\theta)\alpha_{2}\rho(\theta)^{n-1}\sum_{\eta_{1}=0}^{n-1}\left(
\frac{\rho_{2}}{\rho(\theta)}\right)  ^{\eta_{1}}v_{2} =\alpha_{2}\rho
(\theta)^{n}\frac{1-\left(  \frac{\rho_{2}}{\rho(\theta)}\right)  ^{n}%
}{1-\frac{\rho_{2}}{\rho(\theta)}}v_{2}\,.
\end{align*}
Similarly, the second summand yields
\[
\sum\limits_{\eta\in\Gamma_{2}(n-1)}\rho(\theta)^{\eta_{2}+1}c\left(  \rho
_{2}^{-}\right)  ^{\eta_{1}}v_{2}^{-}=c\rho(\theta)^{n}\sum_{\eta_{1}=0}%
^{n-1}\left(  \frac{\rho_{2}^{-}}{\rho(\theta)}\right)  ^{\eta_{1}}v_{2}%
^{-}=c\rho(\theta)^{n}\frac{1-\left(  \frac{\rho_{2}^{-}}{\rho(\theta
)}\right)  ^{n}}{1-\frac{\rho_{2}^{-}}{\rho(\theta)}}v_{2}^{-}\,.
\]
Thus, we get with $\pi_{1}=\pi_{\theta_{2}}$ and $(\pi_{2,1},\pi_{2,2}%
)=\pi_{2}=\pi_{\theta_{1}}$ that
\begin{align*}
&  (n+1)\pi_{\theta_{1}}Q(\theta,n)\1  =(n+1)(\pi_{2,1},\pi_{2,2}%
)\sum\limits_{\eta\in\Gamma_{2}(n-1)}Q_{22}^{\eta_{2}}Q_{21}Q_{11}^{\eta_{1}%
}1\\
&  =(n+1)(\pi_{2,1},\pi_{2,2})v_{2}\alpha_{2}\rho(\theta)^{n}\frac{1-\left(
\frac{\rho_{2}}{\rho(\theta)}\right)  ^{n}}{1-\frac{\rho_{2}^{-}}{\rho
(\theta)}}\\
&  \quad+(n+1)(\pi_{2,1},\pi_{2,2})v_{2}^{-}c\rho(\theta)^{n}\frac{1-\left(
\frac{\rho_{2}^{-}}{\rho(\theta)}\right)  ^{n}}{1-\frac{\rho_{2}^{-}}%
{\rho(\theta)}}\,.
\end{align*}
Since $\Big(\frac{\rho_{2}^{-}}{\rho(\theta)}\Big)  ^{n}\rightarrow0$ for
$n\rightarrow\infty$, one concludes that for $(\pi_{2,1},\pi_{2,2} )v_{2}%
^{-}\not =0$, the first summand is asymptotically equivalent to
\eqref{asequ45}, and the second summand is not asymptotically equivalent to
$0$, so the assertion of Proposition~\ref{prop1} does not hold in this case.
\end{example}

\begin{remark}
Proposition~\ref{prop1} sharpens Theorem~9.4 in the survey Schneider
\cite{Schn86} which asserts that for any matrix of the form \eqref{P_Q}, the
submatrix $Q_{ij}^{(n)}$ has exponential growth rate $s(i,j)^{n}n^{d(i,j)}$,
where $s(i,j)$ is the maximum of $\rho_{k}$ which lie on an admissible path
from $i$ to $j$ and $d(i,j)+1$ is the number of $k$ with $\rho_{k}=s(i,j)$. In
our terminology, $d(i,j)+1=h^{+}(\theta)$, hence this theorem implies that the
exponential growth rate is given by $\rho_{\max}^{n}n^{\max_{\theta}%
h^{+}(\theta)-1}$, where the maximum is taken over all admissible paths
$\theta$ from $i$~to~$j$.
\end{remark}

Proposition~\ref{prop1} can be immediately applied to the summands in the
denominators of Corollary~\ref{lemma2b}. In the following, we show that also
the asymptotic behaviour of the terms in the numerator of
Corollary~\ref{lemma2b} can be understood via Proposition~\ref{prop1}, but for
this purpose, we need to replace the matrix $Q$ by the following matrix. For
$\ell\in\{1,\dots,k\}$, consider
\[
\hat{Q}^{(\ell)}:=%
\begin{pmatrix}
Q_{11} &  &  &  &  & 0\\
\vdots & \ddots &  &  &  & \\
Q_{\ell1} & \hdots & Q_{\ell\ell} &  &  & \\
&  & \operatorname{Id}_{d_{\ell}} & Q_{\ell\ell} &  & \\
&  &  & \vdots & \ddots & \\
0 &  &  & Q_{k\ell} & \hdots & Q_{kk}%
\end{pmatrix}
\,.
\]
We note that any sub-matrix $Q_{ij}$ with $i>\ell$ and $j<\ell$ does not
appear in this matrix. The matrix $\hat{Q}^{(\ell)}$ is also of the form
\eqref{P_Q} with $k+1$ blocks, one more block than the matrix $Q$. Denote the
elements of $\Gamma_{\kappa+1}(n+1-\kappa)$ by $\hat{\eta}=(\eta_{1}%
,\dots,\eta_{\ell},\hat{\eta}_{\ell},\eta_{\ell+1},\dots,\eta_{\kappa})$, and
define for $\theta=(\theta_{1},\dots,\theta_{\kappa})\in\mathcal{P}^{(\ell)}%
$,
\[
\hat{Q}^{(\ell)}(\theta,n)=\sum_{\hat{\eta}\in\Gamma_{\kappa+1}(n+1-\kappa
)}Q_{\theta_{1}\theta_{1}}^{\eta_{1}}Q_{\theta_{1}\theta_{2}}\cdots
Q_{\theta_{u-1}\ell}Q_{\ell\ell}^{\eta_{\ell}}Q_{\ell\ell}^{\hat{\eta}_{\ell}%
}Q_{\ell\theta_{u+1}}\cdots Q_{\theta_{\kappa-1}\theta_{\kappa}}%
Q_{\theta_{\kappa}\theta_{\kappa}}^{\eta_{\kappa}}.
\]

We now aim at understanding the asymptotic growth of sequences of the form
$\pi_{\theta_{1}}\hat{Q}^{(\ell)}(\theta,n)\1 =\pi_{\theta_{1}}%
\sum_{r=0}^{n}Q(\underline{\theta^{\ell} },r)Q(\overline{\theta^{\ell}%
},n-r)\1 $, which occur in the numerators in Corollary~\ref{lemma2b}.
The main idea is to apply Proposition~\ref{prop1}, where $Q(\theta,n)$ is
replaced by $\hat{Q}^{(\ell)}(\theta,n)$.

\begin{proposition}
\label{Nproposition4.24} Consider a matrix $Q$ of the form \eqref{P_Q}, a
number $\ell\in\{1,\dots,k\}$ and an admissible path $\theta=(\theta_{1}%
,\dots,\theta_{\kappa})\in\mathcal{P}^{(\ell)}$. Furthermore, suppose that for
all $u\in\{1,\dots,\kappa\}$ with $\rho_{\theta_{u}}<\rho(\theta)$, the
diagonal term $Q_{\theta_{u}\theta_{u}}$ is scalar. Then for every $\theta
\in\mathcal{P}^{(\ell)}$, we have
\begin{equation}
\hat{Q}^{(\ell)}(\theta,n)=\sum_{r=0}^{n}Q(\underline{\theta^{\ell}%
},r)Q(\overline{\theta^{\ell}},n-r)\,, \label{4.19a}%
\end{equation}
and the following two statements hold.

\begin{itemize}
\item[(i)] If $\alpha_{\theta}\pi_{\theta_{1}}v_{\theta_{1}}\not =0$, then for
the sequence $\pi_{\theta_{1}}\hat{Q}^{(\ell)}(\theta,n)\1 $, an
asymptotically equivalent sequence in the limit $n\rightarrow\infty$ is given
by
\begin{equation}
\alpha_{\theta}\pi_{\theta_{1}}v_{\theta_{1}}\rho(\theta)^{n+1-\kappa}%
\frac{n^{h^{+}(\theta)}}{h^{+}(\theta)!}\prod_{u\in H^{-}(\theta)}\frac
{1}{1-\frac{\rho_{\theta_{u}}}{\rho(\theta)}}\quad\mbox{ if }\rho_{\ell}%
=\rho(\theta) \label{N16}%
\end{equation}
and
\begin{equation}
\alpha_{\theta}\pi_{\theta_{1}}v_{\theta_{1}}\rho(\theta)^{n+1-\kappa}%
\frac{n^{h^{+}(\theta)-1}}{(h^{+}(\theta)-1)!}\frac{1}{1-\frac{\rho_{\ell}%
}{\rho(\theta)}}\prod_{u\in H^{-}(\theta)}\frac{1}{1-\frac{\rho_{\theta_{u}}%
}{\rho(\theta)}}\quad\mbox{ if }\rho_{\ell}<\rho(\theta)\,. \label{N17}%
\end{equation}

\item[(ii)] If $\alpha_{\theta}\pi_{\theta_{1}}v_{\theta_{1}}=0$, then the
exponential growth of the sequence $\pi_{\theta_{1}}\hat{Q}^{(\ell)}%
(\theta,n)\1 $ is for $\rho_{\ell}=\rho(\theta)$ equal to or less than
$\rho(\theta)^{n+1-\kappa}n^{h^{+}(\theta)-1}$ and for $\rho_{\ell}%
<\rho(\theta)$ equal to or less than $\rho(\theta)^{n+1-\kappa}n^{h^{+}%
(\theta)-2}$.
\end{itemize}
\end{proposition}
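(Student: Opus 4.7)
\emph{Proof plan.} The identity \eqref{4.19a} will be proved by direct bookkeeping. Expanding $Q(\underline{\theta^{\ell}},r)$ and $Q(\overline{\theta^{\ell}},n-r)$ via \eqref{P_Q_theta} and multiplying them produces terms of the form
\[
Q_{\theta_{1}\theta_{1}}^{\eta_{1}}\cdots Q_{\theta_{\underline{\kappa}-1}\ell}\,Q_{\ell\ell}^{\eta_{\underline{\kappa}}}\,Q_{\ell\ell}^{\eta'_{\underline{\kappa}+1}}\,Q_{\ell\theta_{\underline{\kappa}+1}}\cdots Q_{\theta_{\kappa}\theta_{\kappa}}^{\eta_{\kappa}},
\]
featuring two successive powers of $Q_{\ell\ell}$ at the splitting point $\ell$. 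A direct count shows that the total exponent sum equals $(r+1-\underline{\kappa})+(n-r+1-\overline{\kappa})=n+1-\kappa$, and summing over $r\in\{0,\dots,n\}$ ranges over all $(\kappa+1)$-tuples with that total, which is exactly the defining sum of $\hat{Q}^{(\ell)}(\theta,n)$ after the relabelling $\eta_{\ell}:=\eta_{\underline{\kappa}}$ and $\hat{\eta}_{\ell}:=\eta'_{\underline{\kappa}+1}$.

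For (i) and (ii), the plan is to recognise $\hat{Q}^{(\ell)}$ as a matrix in Frobenius normal form with $k+1$ diagonal blocks, and to treat the duplicated sequence
\[
\hat{\theta}:=(\theta_{1},\dots,\theta_{\underline{\kappa}},\theta_{\underline{\kappa}},\theta_{\underline{\kappa}+1},\dots,\theta_{\kappa})
\]
of length $\kappa+1$ as an admissible path in $\hat{Q}^{(\ell)}$ (the inserted transition $\operatorname{Id}_{d_{\ell}}$ is nonzero). Comparing the two exponent constraints $m+1-(\kappa+1)=n+1-\kappa$ gives $m=n+1$, and hence
\[
\pi_{\theta_{1}}\hat{Q}^{(\ell)}(\theta,n)\1 \;=\; \pi_{\hat{\theta}_{1}}\,\widetilde{Q}(\hat{\theta},n+1)\1,
\]
where $\widetilde{Q}$ denotes the quantity defined by \eqref{P_Q_theta} applied to $\hat{Q}^{(\ell)}$. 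This reduces both (i) and (ii) to applying Proposition~\ref{prop1} to $\hat{\theta}$ with index $n+1$.

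It remains to match the ingredients attached to $\hat{\theta}$ with those of $\theta$. Since $\ell$ already lies on $\theta$, the new index contributes the already-present eigenvalue $\rho_{\ell}$, so $\rho(\hat{\theta})=\rho(\theta)$; moreover the scalar-diagonal hypothesis transfers verbatim (the new block is $Q_{\ell\ell}$, which is scalar whenever $\rho_{\ell}<\rho(\theta)$ by the hypothesis on $\theta$). In the inductive definition of Lemma~\ref{lemma1}~(iv), the inserted transition yields $\operatorname{Id}_{d_{\ell}}v_{\ell}=1\cdot v_{\ell}+0$, so the extra scalar equals $1$; the remaining inductive steps coincide with those for $\theta$, whence $\alpha_{\hat{\theta}}=\alpha_{\theta}$, while $\pi_{\hat{\theta}_{1}}=\pi_{\theta_{1}}$ and $v_{\hat{\theta}_{1}}=v_{\theta_{1}}$ trivially. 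The only genuine change lies in the counts $h^{\pm}$: inserting a copy of $\ell$ increases $h^{+}$ by $1$ when $\rho_{\ell}=\rho(\theta)$, and adds one element to $H^{-}$ (contributing the factor $1/(1-\rho_{\ell}/\rho(\theta))$) when $\rho_{\ell}<\rho(\theta)$.

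Substituting these identifications into \eqref{asequ45} applied to $\hat{\theta}$ with $n$ replaced by $n+1$, and using $(n+1)^{p}\sim n^{p}$, one obtains exactly \eqref{N16} in the first case and \eqref{N17} in the second, proving (i). For (ii), the analogous substitution into the exponential-growth bound of Proposition~\ref{prop1}~(ii) yields $\rho(\theta)^{n+1-\kappa}n^{h^{+}(\hat{\theta})-2}$, which equals $\rho(\theta)^{n+1-\kappa}n^{h^{+}(\theta)-1}$ when $\rho_{\ell}=\rho(\theta)$ and $\rho(\theta)^{n+1-\kappa}n^{h^{+}(\theta)-2}$ when $\rho_{\ell}<\rho(\theta)$. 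The main step requiring care is the verification that the inserted identity transition preserves the $\alpha$-constants and that the scalar-diagonal hypothesis transfers to $\hat{\theta}$; the rest is a mechanical translation through the identity $\hat{Q}^{(\ell)}(\theta,n)=\widetilde{Q}(\hat{\theta},n+1)$.
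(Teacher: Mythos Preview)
Your proposal is correct and follows essentially the same approach as the paper: establish \eqref{4.19a} by bookkeeping, identify $\hat{Q}^{(\ell)}(\theta,n)$ with the $Q(\cdot,\cdot)$-type sum for an extended path of length $\kappa+1$ in the $(k+1)$-block matrix $\hat{Q}^{(\ell)}$ at index $n+1$, and then invoke Proposition~\ref{prop1}. The one notational wrinkle is that your $\hat{\theta}=(\theta_{1},\dots,\theta_{\underline{\kappa}},\theta_{\underline{\kappa}},\dots,\theta_{\kappa})$ is not literally an admissible path (it is not strictly decreasing in $\{1,\dots,k\}$); the paper handles this by shifting the first $\underline{\kappa}$ indices up by one to obtain $\hat{\theta}^{(\ell)}=(\theta_{1}+1,\dots,\theta_{u-1}+1,\ell+1,\ell,\theta_{u+1},\dots,\theta_{\kappa})$, which is strictly decreasing in $\{1,\dots,k+1\}$ and faithfully indexes the blocks of $\hat{Q}^{(\ell)}$---but your intent is clear and the argument goes through once this relabelling is made explicit.
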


\begin{proof}
In a first step, we show that (\ref{4.19a}) holds for every $\theta
\in\mathcal{P}^{(\ell)}$, i.e.
\begin{align*}
&  \sum_{\hat{\eta}\in\Gamma_{\kappa+1}(n+1-\kappa)}Q_{\theta_{1}\theta_{1}%
}^{\eta_{1}}Q_{\theta_{1}\theta_{2}}\cdots Q_{\theta_{u-1}\ell}Q_{\ell\ell
}^{\eta_{\ell}}Q_{\ell\ell}^{\hat{\eta}_{\ell}}Q_{\ell\theta_{u+1}}\cdots
Q_{\theta_{\kappa-1}\theta_{\kappa}}Q_{\theta_{\kappa}\theta_{\kappa}}%
^{\eta_{\kappa}}\\
&  =\sum_{r=0}^{n}\sum_{\substack{\eta\in\Gamma_{\underline{\kappa}%
}(r+1-\underline{\kappa})\,,\\\zeta\in\Gamma_{\overline{\kappa}}%
(n-r+1-\overline{\kappa})}}Q_{\theta_{1}\theta_{1}}^{\eta_{1}}\cdots
Q_{\ell\ell}^{\eta_{\underline{\kappa}}}Q_{\ell\ell}^{\zeta_{1}}\cdots
Q_{\theta_{\kappa}\theta_{\kappa}}^{\zeta_{\overline{\kappa}}}\,.
\end{align*}
We order the summands on the left hand side by putting together the summands
with equal sum of the first $\underline{\kappa}$ exponents, say $\psi
_{1}+\dots+\psi_{\underline{\kappa}}=r+1-\underline{\kappa}$ for some
$r\in\{\underline{\kappa}-1,\dots,n+1-\overline{\kappa}\}$ where
$\overline{\kappa}=\kappa+1-\underline{\kappa}$. Hence, the sum of the last
$\overline{\kappa}$ exponents equals $n+1-\kappa-\left(  r+1-\underline
{\kappa}\right)  =n-r+1-\overline{\kappa}$. Then the left hand side equals%
\[
\sum_{r=\underline{\kappa}-1}^{n+1-\overline{\kappa}}\sum_{\substack{\eta
\in\Gamma_{\underline{\kappa}}(r+1-\underline{\kappa})\,,\\\zeta\in
\Gamma_{\overline{\kappa}}(n-r+1-\overline{\kappa})}}Q_{\theta_{1}\theta_{1}%
}^{\eta_{1}}\cdots Q_{\ell\ell}^{\eta_{\underline{\kappa}}}Q_{\ell\ell}%
^{\zeta_{1}}\cdots Q_{\theta_{\kappa}\theta_{\kappa}}^{\zeta_{\overline
{\kappa}}}\,.
\]
Since by definition $\Gamma_{\underline{\kappa}}(r+1-\underline{\kappa
})=\emptyset$ for $r+1-\underline{\kappa}<0$ and $\Gamma_{\overline{\kappa}%
}(n-r+1-\overline{\kappa})=\emptyset$ for $n-r+1-\underline{\kappa}<0$,
equality (\ref{4.19a}) follows.

We apply Proposition~\ref{prop1} to the matrix $\hat{Q}^{(\ell)}$ instead of
the matrix $Q$. For this extended matrix, we consider the admissible path of
length $\kappa+1$
\begin{equation}
\hat{\theta}^{(\ell)}:=(\theta_{1}+1,\dots,\theta_{u-1}+1,\ell+1,\ell
,\theta_{u+1},\dots,\theta_{\kappa}), \label{theta_hat}%
\end{equation}
where $\theta_{u}=\ell$. The matrix $\hat{Q}^{(\ell)}(\theta,n)$ corresponds
to the admissible sequence $\hat{\theta}^{(\ell)}$ for $\hat{Q}^{(\ell)}$,
more precisely, one sees that
\begin{equation}
\label{qhatformula}\hat{Q}^{(\ell)}(\theta,n)=(\hat{Q}^{(\ell)})(\hat{\theta
}^{(\ell)},n+1),
\end{equation}
since $\Gamma_{\kappa+1}(n+1-\kappa)=\Gamma_{\kappa+1}((n+1)+1-(\kappa+1))$,
where the right hand side of \eqref{qhatformula} is defined as
\eqref{P_Q_theta} with $Q$ replaced by $\hat{Q}^{(\ell)}$.

Proposition~\ref{prop1} yields the following two statements.

\begin{itemize}
\item[(i)] If $\pi_{\theta_{1}}v_{\theta_{1}}\not =0$ and $\alpha_{\theta
}\not =0$, then the sequence $\pi_{\theta_{1}}\hat{Q}^{(\ell)}(\theta
,n)\1 $ is asymptotically equivalent in the limit $n\rightarrow\infty$
to
\[
\pi_{\theta_{1}}v_{\theta_{1}}\alpha_{\theta}\sum_{\eta\in\Gamma_{\kappa
+1}(n+1-\kappa)}\rho_{\theta_{\kappa}}^{\eta_{\kappa}+1}\cdots\rho
_{\theta_{u+1}}^{\eta_{u+2}}\rho_{\ell}^{\eta_{u+1}}\rho_{\ell}^{\eta_{u}}%
\rho_{\theta_{u-1}}^{\eta_{u-1}}\cdots\rho_{\theta_{1}}^{\eta_{1}}\,,
\]
where $\alpha_{\theta}$ is equal to the corresponding quantity for the
matrices $Q$ and $\hat{Q}^{(\ell)}$, since for $\hat{Q}^{(\ell)}$ the block in
row $\ell+1$ and column $\ell$ is the identity matrix. If $\rho_{\ell}%
=\rho(\theta)$, then $h^{+}(\hat{\theta}^{(\ell)})=h^{+}(\theta)+1$, and if
$\rho_{\ell}< q(\theta)$, then $h^{+}(\hat{\theta}^{(\ell)})=h^{+}(\theta)$.
Hence, Proposition~\ref{prop1} shows that the sequence above is asymptotically
equivalent to
\[
\alpha_{\theta}\pi_{\theta_{1}}v_{\theta_{1}}\rho(\theta)^{n+1-\kappa}%
\frac{(n+1)^{h^{+}(\theta)}}{h^{+}(\theta)!}\prod_{u\in H^{-}(\theta)}\frac
{1}{1-\frac{\rho_{\theta_{u}}}{\rho(\theta)}}\quad\mbox{ if }\rho_{\ell}%
=\rho(\theta)
\]
and
\[
\alpha_{\theta}\pi_{\theta_{1}}v_{\theta_{1}}\rho(\theta)^{n+1-\kappa}%
\frac{(n+1)^{h^{+}(\theta)-1}}{(h^{+}(\theta)-1)!}\frac{1}{1-\frac{\rho_{\ell
}}{\rho(\theta)}}\prod_{u\in H^{-}(\theta)}\frac{1}{1-\frac{\rho_{\theta_{u}}%
}{\rho(\theta)}}\quad\mbox{ if }\rho_{\ell}<\rho(\theta)\,.
\]
This proves assertion (i), since $(n+1)^{h^{+}(\theta)}$ is asymptotically
equivalent to $n^{h^{+(\theta)}}$.

\item[(ii)] If $\pi_{\theta_{1}}v_{\theta_{1}}=0$ or $\alpha_{\theta}=0$, then
the exponential growth of the sequence $\pi_{\theta_{1}}\hat{Q}^{(\ell
)}(\theta,n)\1 $ is for $\rho_{\ell}=\rho(\theta)$ equal to or less
than $\rho(\theta)^{n+1-\kappa}n^{h^{+}(\theta)-1}$ and for $\rho_{\ell}%
<\rho(\theta)$ equal to or less than $\rho(\theta)^{n+1-\kappa}n^{h^{+}%
(\theta)-2}$.
\end{itemize}

This finishes the proof of the proposition.
\end{proof}

So far, we have fixed a particular admissible path $\theta\in\mathcal{P}$, and
in both Proposition~\ref{prop1} and Proposition~\ref{Nproposition4.24}, we
made certain assumptions relating to this particular $\theta$. In the
following assumption for our main results, we consider all relevant admissible
paths. Recall that $\rho_{\max} = \max\{\rho_{1},\dots,\rho_{k}\}$ and
$h^{+}_{\max}:=\max\{h^{+}(\theta): \theta\in\mathcal{P}\mbox{ and }
\rho(\theta)= \rho_{\max}\}$, and define the set of maximal admissible paths
$\mathcal{P}_{\max}$ by%
\[
\mathcal{P}_{\max}:=\left\{  \theta\in\mathcal{P}:h^{+}(\theta)=h^{+}_{\max}
\mbox{ and } \rho(\theta)=\rho_{\max} \right\}  \,,
\]
and let $\mathcal{P}_{\max}^{(\ell)}:=\mathcal{P}^{(\ell)}\cap\mathcal{P}%
_{\max}$.

\begin{assumption}
\label{mainass} Consider a matrix $Q$ of the Frobenius normal form
\eqref{P_Q}, and let $(X_{i})_{i\in\mathbb{N}_{0}}$ be the Markov chain
associated to the substochastic matrix $Q$ starting in $\pi$. We assume that

\begin{itemize}
\item[(i)] for all $i\in\{0,\dots,k\}$ with $\rho_{i}<\rho_{\max}$, the
diagonal term $Q_{ii}$ is scalar, and

\item[(ii)] there exists a maximal admissible path $\theta=(\theta_{1}%
,\dots,\theta_{\kappa})\in\mathcal{P}_{\max}$ such that the constant
$\alpha_{\theta}\not =0$ and the initial distribution $\pi$ satisfies
$\pi_{\theta_{1}}v_{\theta_{1}}\not =0$, where $\alpha_{\theta}$ and the
Perron--Frobenius eigenvector $v_{\theta_{1}}$ of $Q_{\theta_{1}\theta_{1}}$
are defined as in Lemma~\ref{lemma1}.
\end{itemize}
\end{assumption}

By combining Proposition~\ref{prop1} and Proposition~\ref{Nproposition4.24},
we arrive at the following formulas for the quasi-ergodic limits.

\begin{theorem}
[Quasi-ergodic limits for finite absorbing Markov chains]\label{Theorem3}%
Suppose that Assumption~\ref{mainass} holds, and let $\ell\in\{1,\dots,k\}$.
Then the following statements hold:

\begin{itemize}
\item[(i)] If $\rho_{\ell}<\rho_{\max}$ or $\mathcal{P}_{\max}^{(\ell
)}=\emptyset$, then%
\[
~\lim_{n\to\infty}\mathbb{E}_{\pi}\big[  \tfrac{1}{n+1}\#\big\{m\in
\{0,\dots,n\}:X_{m}\in I_{\ell}\big\}\big|T>n\big]  =0 \,.
\]

\item[(ii)] If $\rho_{\ell}=\rho_{\max}$ and $\mathcal{P}_{\max}^{(\ell
)}\not =\emptyset$, then%
\begin{align*}
&  ~\lim_{n\rightarrow\infty}\mathbb{E}_{\pi}\big[\tfrac{1}{n+1}%
\#\big\{m\in\{0,\dots,n\}:X_{m}\in I_{\ell}\big\}\big|T>n\big]\\
&  =\frac{\sum_{\theta\in\mathcal{P}_{\max}^{(\ell)}}\alpha_{\theta}%
\pi_{\theta_{1}}v_{\theta_{1}}\prod_{u\in H^{-}(\theta)}\frac{1}{\rho_{\max
}-\rho_{\theta_{u}}}}{h_{\max}^{+}\sum_{\theta\in\mathcal{P}_{\max}}%
\alpha_{\theta}\pi_{\theta_{1}}v_{\theta_{1}}\prod_{u\in H^{-}(\theta)}%
\frac{1}{\rho_{\max}-\rho_{\theta_{u}}}}\,.
\end{align*}

\end{itemize}
\end{theorem}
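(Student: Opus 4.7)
The plan is to pass to the limit $n \to \infty$ in the exact identity of Corollary~\ref{lemma2b}~(i). The convolution in its numerator equals $\hat{Q}^{(\ell)}(\theta,n)$ by equation~\eqref{4.19a} of Proposition~\ref{Nproposition4.24}, so the expectation in the statement equals $N_\ell(n)/D(n)$ with
\[
N_\ell(n) := \sum_{\theta \in \mathcal{P}^{(\ell)}} \pi_{\theta_1}\, \hat{Q}^{(\ell)}(\theta,n)\1
\quad\text{and}\quad
D(n) := (n+1)\sum_{\theta \in \mathcal{P}} \pi_{\theta_1}\, Q(\theta,n)\1.
\]
Assumption~\ref{mainass}~(i) forces the scalarity hypothesis of both Proposition~\ref{prop1} and Proposition~\ref{Nproposition4.24} at every admissible path, because any index $u$ with $\rho_{\theta_u} < \rho(\theta)$ satisfies $\rho_{\theta_u} < \rho_{\max}$. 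Consequently each summand of $D(n)$ and of $N_\ell(n)$ is either asymptotically equivalent to the formula in part~(i) (when $\alpha_\theta \pi_{\theta_1} v_{\theta_1}\ne 0$) or grows strictly slower by part~(ii).

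The key algebraic step is to absorb the $\theta$-dependent exponent $\kappa(\theta)$ into a universal prefactor. For $\theta \in \mathcal{P}_{\max}$ we have $h^-(\theta) = \kappa(\theta) - h^+_{\max}$, hence
\[
\rho_{\max}^{n+1-\kappa(\theta)} \prod_{u \in H^-(\theta)}\frac{1}{1 - \rho_{\theta_u}/\rho_{\max}}
= \rho_{\max}^{n+1-h^+_{\max}} \prod_{u \in H^-(\theta)} \frac{1}{\rho_{\max} - \rho_{\theta_u}},
\]
which removes the $\kappa$-dependence from the leading asymptotics. Proposition~\ref{prop1}~(i), together with the bound in part~(ii) for paths with vanishing prefactor and the strict inequalities for $\rho(\theta) < \rho_{\max}$ or $h^+(\theta) < h^+_{\max}$, then yields
\[
D(n) \sim \rho_{\max}^{n+1-h^+_{\max}}\, \frac{n^{h^+_{\max}}}{(h^+_{\max}-1)!}\, C_D,
\quad
C_D := \sum_{\theta \in \mathcal{P}_{\max}} \alpha_\theta \pi_{\theta_1} v_{\theta_1}\prod_{u \in H^-(\theta)}\frac{1}{\rho_{\max} - \rho_{\theta_u}},
\]
with $C_D\ne 0$ by Assumption~\ref{mainass}~(ii).

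In case~(ii) of the theorem ($\rho_\ell = \rho_{\max}$, $\mathcal{P}^{(\ell)}_{\max}\ne\emptyset$), Proposition~\ref{Nproposition4.24}~(i) in the branch $\rho_\ell = \rho(\theta)$ supplies the same prefactor but with $n^{h^+_{\max}}/h^+_{\max}!$ in place of $n^{h^+_{\max}-1}/(h^+_{\max}-1)!$, giving
\[
N_\ell(n) \sim \rho_{\max}^{n+1-h^+_{\max}}\, \frac{n^{h^+_{\max}}}{h^+_{\max}!}\, C_{N,\ell},
\quad
C_{N,\ell} := \sum_{\theta \in \mathcal{P}^{(\ell)}_{\max}} \alpha_\theta \pi_{\theta_1} v_{\theta_1} \prod_{u \in H^-(\theta)}\frac{1}{\rho_{\max} - \rho_{\theta_u}}.
\]
The common factor $\rho_{\max}^{n+1-h^+_{\max}} n^{h^+_{\max}}$ cancels and one reads off the claimed limit $C_{N,\ell}/(h^+_{\max} C_D)$. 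For case~(i), either $\rho_\ell < \rho_{\max}$ puts us in the other branch of Proposition~\ref{Nproposition4.24}~(i) with growth only $\rho_{\max}^n n^{h^+_{\max}-1}$, which is a factor of $n$ smaller than $D(n)$, or $\mathcal{P}^{(\ell)}_{\max} = \emptyset$ forces $h^+(\theta)\le h^+_{\max}-1$ for every $\theta \in \mathcal{P}^{(\ell)}$ with the same conclusion; in either subcase the ratio tends to $0$.

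The hardest part will be the uniform bookkeeping of all subdominant contributions: verifying that the terms estimated by part~(ii) of Propositions~\ref{prop1} and~\ref{Nproposition4.24}, together with all paths having $\rho(\theta) < \rho_{\max}$ or $h^+(\theta) < h^+_{\max}$, genuinely amount to $o(\rho_{\max}^n n^{h^+_{\max}-1})$ for $D(n)$ and correspondingly $o(\rho_{\max}^n n^{h^+_{\max}})$ for $N_\ell(n)$ when $\rho_\ell = \rho_{\max}$, so that the $\rho_{\max}^{n+1-h^+_{\max}}$ prefactors cancel cleanly and the nonzero limit supplied by Assumption~\ref{mainass}~(ii) survives rather than producing an indeterminate form.
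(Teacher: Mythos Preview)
Your approach is essentially the paper's: both reduce to the ratio $N_\ell(n)/D(n)$ via Corollary~\ref{lemma2b} and equation~\eqref{4.19a}, then invoke Propositions~\ref{prop1} and~\ref{Nproposition4.24} to isolate the dominant summands indexed by $\mathcal{P}_{\max}$ and $\mathcal{P}_{\max}^{(\ell)}$ and cancel the common factor $\rho_{\max}^{n}n^{h^+_{\max}}$. Your explicit identity $\rho_{\max}^{n+1-\kappa(\theta)}\prod_{u\in H^-(\theta)}(1-\rho_{\theta_u}/\rho_{\max})^{-1}=\rho_{\max}^{n+1-h^+_{\max}}\prod_{u\in H^-(\theta)}(\rho_{\max}-\rho_{\theta_u})^{-1}$, which removes the $\kappa(\theta)$-dependence from the leading term, is a helpful clarification that the paper leaves implicit when passing from $\Psi_n(\theta)$ and $\Xi_n(\theta)$ to the final formula.
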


\begin{proof}
We first assume $\rho_{\ell}=\rho_{\max}$ and show that in this case, we have
\begin{align}
&  \lim_{n\rightarrow\infty}\mathbb{E}_{\pi}\big[\tfrac{1}{n+1}\#\big\{m\in
\{0,\dots,n\}:X_{m}\in I_{\ell}\big\}\big|T>n\big]\nonumber\\
=  &  \lim_{n\rightarrow\infty}\frac{\sum_{\theta\in\mathcal{P}^{(\ell)}}%
\pi_{\theta_{1}}\hat{Q}^{(\ell)}(\theta,n)\1 }{\sum_{\theta
\in\mathcal{P}}(n+1)\pi_{\theta_{1}}Q(\theta,n)\1 }\nonumber\\
=  &  \lim_{n\rightarrow\infty}\frac{\sum_{\theta\in\mathcal{P}_{\max}%
^{(\ell)}}\pi_{\theta_{1}}\hat{Q}^{(\ell)}(\theta,n)\1 }{\sum
_{\theta\in\mathcal{P}_{\max}}(n+1)\pi_{\theta_{1}}Q(\theta,n)\1 %
}\,.\label{thirdrow}%
\end{align}
The first equality follows from Corollary \ref{lemma2b}~(i) and Proposition
\ref{Nproposition4.24} applied to the numerator in Corollary \ref{lemma2b}~(i).

For the second equality, we identify summands in both denominator and
numerator that dominate for $n\rightarrow\infty$. By Assumption \ref{mainass}
(ii) there exists a maximal admissible path $\theta\in\mathcal{P}_{\max}$ with
$\alpha_{\theta}\pi_{\theta_{1}}v_{\theta_{1}}\not =0$, hence Proposition
\ref{prop1} shows that for the sequence $\pi_{\theta_{1}}Q(\theta
,n)\1 $ an asymptotically equivalent sequence for $n\rightarrow\infty$
is given by (\ref{asequ45}). Thus $(n+1)\pi_{\theta_{1}}Q(\theta,n)$ has
exponential growth rate equal to $\rho(\theta)^{n}n^{h^{+}(\theta)}=\rho
_{\max}^{n}n^{h_{\max}^{+}}$. For any $\theta\in\mathcal{P}\setminus
\mathcal{P}_{\max}$, one has $h^{+}(\theta)<h_{\max}^{+}$, and the summand
$(n+1)\pi_{\theta_{1}}Q(\theta,n)\1 $ grows at most with the smaller
exponential growth rate $\rho(\theta)^{n}n^{h^{+}(\theta)}$, again by
Proposition~\ref{prop1}. This justifies replacing $\mathcal{P}$ by
$\mathcal{P}_{\max}$ in the denominator. For any $\theta\in\mathcal{P}%
^{(\ell)}$ in the numerator, Proposition \ref{Nproposition4.24} shows
that the summand $\pi_{\theta_{1}}\hat{Q}^{(\ell)}(\theta
,n)\1 $ grows at most with exponential growth rate $\rho(\theta
)^{n}n^{h^{+}(\theta)}=\rho_{\max}^{n}n^{h^{+}(\theta)}$, see \eqref{N16} (the
exponential growth can be smaller than that by
Proposition~\ref{Nproposition4.24}~(ii), when $\alpha_{\theta}\pi_{\theta_{1}%
}v_{\theta_{1}}=0$). This justifies replacing $\mathcal{P}^{(\ell)}$ by
$\mathcal{P}_{\max}^{(\ell)}$ in the numerator using again $h^{+}%
(\theta)<h_{\max}^{+}$ for $\theta\in\mathcal{P}^{(\ell)}\setminus
\mathcal{P}_{\max}^{(\ell)}$ (note that for $\mathcal{P}_{\max}^{(\ell
)}=\varnothing$ the limits for $n\rightarrow\infty$ equal $0$).

Now for the denominator in \eqref{thirdrow}, Proposition~\ref{prop1} (i)
yields the following: Let $\theta\in\mathcal{P}_{\max}$ with $\alpha_{\theta
}\pi_{\theta_{1}}v_{\theta_{1}}\not =0$ (recall that the existence is clear
due to Assumption~\ref{mainass}~(ii)). Then for the corresponding summand,
given by $(n+1)\pi_{\theta_{1}}Q(\theta,n)\1 $, an asymptotically
equivalent sequence is
\[
\Psi_{n}(\theta)=\alpha_{\theta}\pi_{\theta_{1}}v_{\theta_{1}}\rho_{\max
}^{n+1-\kappa(\theta)}\frac{n^{h_{\max}^{+}}}{(h_{\max}^{+}-1)!}\prod_{u\in
H^{-}(\theta)}\frac{1}{1-\frac{\rho_{\theta_{u}}}{\rho_{\max}}}\,.
\]
Note that for $\theta\in\mathcal{P}_{\max}$ with $\alpha_{\theta}\pi
_{\theta_{1}}v_{\theta_{1}}=0$, the corresponding summand given by
$(n+1)\pi_{\theta_{1}}Q(\theta,n)\1 $ has weaker exponential growth for
$n\rightarrow\infty$, equal to or less than $\rho_{\max}^{n}n^{h_{\max}^{+}%
-1}$. Since $\Psi_{n}(\theta)=0$ for those $\theta$, this implies that an
asymptotically equivalent term for the denominator is given by $\sum
_{\theta\in\mathcal{P}_{\max}}\Psi_{n}(\theta)$ with exponential growth
$\rho_{\max}^{n}n^{h_{\max}^{+}}$.

For the numerator, suppose first that there exists $\theta\in\mathcal{P}%
_{\max}^{(\ell)}$ with $\alpha_{\theta}\pi_{\theta_{1}}v_{\theta_{1}}\not =0$.
Then by Proposition \ref{Nproposition4.24}, for the corresponding summand
$\pi_{\theta_{1}}\hat{Q}^{(\ell)}(\theta,n)\1 $, an asymptotically
equivalent sequence is
\[
\Xi_{n}(\theta):=\alpha_{\theta}\pi_{\theta_{1}}v_{\theta_{1}}\rho_{\max
}^{n+1-\kappa(\theta)}\frac{n^{h_{\max}^{+}}}{h_{\max}^{+}!}\prod_{u\in
H^{-}(\theta)}\frac{1}{1-\frac{\rho_{\theta_{u}}}{\rho_{\max}}}\,.
\]
By Proposition \ref{Nproposition4.24} (ii), for $\theta\in\mathcal{P}_{\max
}^{(\ell)}$ with $\alpha_{\theta}\pi_{\theta_{1}}v_{\theta_{1}}=0$ the
corresponding summand given by $\pi_{\theta_{1}}\hat{Q}^{(\ell)}%
(\theta,n)\1 $ has weaker exponential growth for $n\rightarrow\infty$,
equal to or less than $\rho_{\max}^{n}n^{h_{\max}^{+}-1}$. Since $\Xi
_{n}(\theta)=0$ for those $\theta$, this implies that an asymptotically
equivalent term for the numerator is given by $\sum_{\theta\in\mathcal{P}%
_{\max}^{(\ell)}}\Xi_{n}(\theta)$. Hence, the formula given in (ii) holds in
this case. Otherwise, $\mathcal{P}_{\max}^{(\ell)}=\emptyset$ or for all
$\theta\in\mathcal{P}_{\max}^{(\ell)}$ we have $\alpha_{\theta}\pi_{\theta
_{1}}v_{\theta_{1}}=0$. Then the limits for $n\rightarrow\infty$ are equal to
$0$, hence assertion (ii) and also the second statement in (i) follow.

It remains to show (i) under the assumption $\rho_{\ell}<\rho_{\max}$. In this
case, exactly like above, an asymptotically equivalent term for the
denominator is given by $\sum_{\theta\in\mathcal{P}_{\max}}\Psi_{n}(\theta)$.
Consider a summand in the numerator, so let $\theta\in\mathcal{P}^{(\ell)}$.
Then Proposition~\ref{Nproposition4.24} shows that the exponential growth for
$n\rightarrow\infty$ of the numerator is bounded above by either $\rho
(\theta)^{n}n^{h^{+}(\theta)}$ if $\rho(\theta)<\rho_{\max}$, or by
$\rho_{\max}^{n}n^{h_{\max}^{+}-1}$ if $\rho(\theta)=\rho_{\max}$. In both
cases, the exponential growth is weaker than for the denominator, determined
by $\sum_{\theta\in\mathcal{P}_{\max}}\Psi_{n}(\theta)$ with exponential
growth $\rho_{\max}^{n}n^{h_{\max}^{+}}$. This finishes the proof of the theorem.
\end{proof}

\begin{remark}
We note that due to Remark~\ref{genericity_alpha}, the condition $\alpha_{\theta}\not =0$ in
Assumption~\ref{mainass}~(ii) is generically satisfied, and  the condition $\pi_{\theta_{1}}v_{\theta_{1}}\not =0$ in this assumption
is not restrictive. In fact, suppose that for a given
initial distribution $\pi$, there is no $\theta\in\mathcal{P}_{\max}$
with $\pi_{\theta_{1}}v_{\theta_{1}}\not =0$. Then define%
\begin{align*}
h_{\max}^{+,\pi}  & :=\max\big\{h^{+}(\theta):\theta\in\mathcal{P}\text{ and }%
\rho(\theta)=\rho_{\max},\pi_{\theta_{1}}v_{\theta_{1}}\not =0\big\}\,,\\
\mathcal{P}_{\max}^{\pi}  & :=\big\{\theta\in\mathcal{P}:\rho(\theta)=\rho_{\max
}\text{ and }h^{+}(\theta)=h_{\max}^{+,\pi}\big\}\,.
\end{align*}
Then Theorem~\ref{Theorem3} remains valid with $\mathcal{P}_{\max}$ and
$\mathcal{P}^{(\ell)}$ replaced by $\mathcal{P}_{\max}^{\pi}$ and
$\mathcal{P}_{\max}^{(\ell),\pi}:=\mathcal{P}^{(\ell)}\cap\mathcal{P}_{\max
}^{\pi}$, respectively.
\end{remark}

The formula for the quasi-ergodic limit in the above theorem can be simplified
in certain special cases.

\begin{corollary}
\label{Corollary4.16} Assume that in the setting of Theorem~\ref{Theorem3},
there is only one maximal path, i.e.~$\mathcal{P}_{\max}=\{\theta=(\theta
_{1},\dots,\theta_{\kappa})\}$. If $\ell=\theta_{u}$ with $\rho_{\ell}%
=\rho_{\max}$ for some $u\in\{1,\dots,\kappa\}$, then
\[
~\lim_{n\rightarrow\infty}\mathbb{E}_{\pi}\left[  \tfrac{1}{n+1}%
\#\big\{m\in\{0,\dots,n\}:X_{m}\in I_{\ell}\big\}\big|T>n\right]  =\frac
{1}{h_{\max}^{+}}\,,
\]
and this limit vanishes whenever $\rho_{\ell}<\rho_{\max}$.
\end{corollary}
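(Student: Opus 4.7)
The plan is to derive Corollary~\ref{Corollary4.16} as a direct specialisation of Theorem~\ref{Theorem3}. Under Assumption~\ref{mainass}~(ii) applied to the singleton $\mathcal{P}_{\max}=\{\theta\}$, the unique maximal path must itself satisfy $\alpha_{\theta}\pi_{\theta_{1}}v_{\theta_{1}}\not =0$, so the numerator and denominator in the formula of Theorem~\ref{Theorem3}~(ii) are both nontrivial.

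First I would handle the vanishing case $\rho_{\ell}<\rho_{\max}$: this is precisely the hypothesis of Theorem~\ref{Theorem3}~(i), which immediately gives the limit~$0$.

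For the main case, suppose $\ell=\theta_{u}$ for some $u\in\{1,\dots,\kappa\}$ with $\rho_{\ell}=\rho_{\max}$. Then the unique maximal path $\theta$ passes through $\ell$, so $\theta\in\mathcal{P}^{(\ell)}\cap\mathcal{P}_{\max}=\mathcal{P}_{\max}^{(\ell)}$. Since $\mathcal{P}_{\max}^{(\ell)}\subseteq\mathcal{P}_{\max}=\{\theta\}$, we in fact have the equality $\mathcal{P}_{\max}^{(\ell)}=\mathcal{P}_{\max}=\{\theta\}$. Consequently, the sums in the numerator and denominator of Theorem~\ref{Theorem3}~(ii) consist of a single identical term, namely
\[
\alpha_{\theta}\pi_{\theta_{1}}v_{\theta_{1}}\prod_{u\in H^{-}(\theta)}\frac{1}{\rho_{\max}-\rho_{\theta_{u}}}\,,
\]
which is nonzero by Assumption~\ref{mainass}~(ii). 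Cancelling this common factor yields the value $1/h_{\max}^{+}$, as claimed.

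No technical obstacle is anticipated: the corollary is essentially a bookkeeping consequence of the fact that a singleton $\mathcal{P}_{\max}$ forces $\mathcal{P}_{\max}^{(\ell)}$ to be either empty (when $\ell$ does not appear on $\theta$) or equal to $\mathcal{P}_{\max}$. The only subtlety is verifying that the case $\rho_{\ell}=\rho_{\max}$ with $\ell\notin\{\theta_{1},\dots,\theta_{\kappa}\}$ does not occur under the hypothesis $\ell=\theta_{u}$; but this is built into the statement. The vanishing claim at the end of the corollary also follows directly from Theorem~\ref{Theorem3}~(i), since the hypothesis $\rho_{\ell}<\rho_{\max}$ triggers that case regardless of whether $\ell$ lies on $\theta$.
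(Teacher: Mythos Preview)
Your proposal is correct and follows exactly the approach of the paper, which simply states that the assertion is an immediate consequence of Theorem~\ref{Theorem3}. You have merely spelled out the details of that immediate consequence.
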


\begin{proof}
The assertion is an immediate consequence of Theorem~\ref{Theorem3}.
\end{proof}

In the scalar case, one obtains the following formulas which are directly
given in terms of the matrix $Q$.

\begin{corollary}
\label{Corollary6.37} Consider a matrix $Q$ of the Frobenius normal form
\eqref{P_Q}, and let $(X_{i})_{i\in\mathbb{N}_{0}}$ be the Markov chain
associated to the substochastic matrix $Q$ starting in $\pi$. We assume that
all submatrices $q_{ij}:=Q_{ij}$ are scalar and the initial distribution $\pi$
satisfies $\pi_{\theta_{1}}\not =0$ for some maximal admissible path
$\theta=(\theta_{1},\dots,\theta_{\kappa})\in\mathcal{P}_{\max}$. Then for all
$\ell\in\{1,\dots,k\}$, the following holds.

\begin{itemize}
\item[(i)] If $\rho_{\ell}<\rho_{\max}$ or $\mathcal{P}_{\max}^{(\ell
)}=\emptyset$, then%
\[
~\lim_{n\to\infty}\mathbb{E}_{\pi}\left[  \tfrac{1}{n+1}\#\big\{m\in
\{0,\dots,n\}:X_{m}\in I_{\ell}\big\}\big|T>n\right]  =0.
\]

\item[(ii)] If $\rho_{\ell}=\rho_{\max}$ and $\mathcal{P}_{\max}^{(\ell
)}\not =\emptyset$, then
\begin{align*}
&  ~\lim_{n\rightarrow\infty}\mathbb{E}_{\pi}\left[  \tfrac{1}{n+1}%
\#\big\{m\in\{0,\dots,n\}:X_{m}\in I_{\ell}\big\}\big|T>n\right] \\
&  =\frac{1}{h_{\max}^{+}}\frac{\sum_{\theta\in\mathcal{P}_{\max}^{(\ell)}}%
\pi_{\theta_{1}}q_{\theta_{1}\theta_{2}}\cdots q_{\theta_{\kappa-1}%
\theta_{\kappa}}\prod_{u\in H^{-}(\theta)}\frac{1}{\rho_{\max}-\rho
_{\theta_{u}}}}{\sum_{\theta\in\mathcal{P}_{\max}}\pi_{\theta_{1}}%
q_{\theta_{1}\theta_{2}}\cdots q_{\theta_{\kappa-1}\theta_{\kappa}}\prod_{u\in
H^{-}(\theta)}\frac{1}{\rho_{\max}-\rho_{\theta_{u}}}}\,.
\end{align*}

\end{itemize}
\end{corollary}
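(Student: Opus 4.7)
The plan is to simply specialize Theorem~\ref{Theorem3} to the scalar setting and verify that Assumption~\ref{mainass} automatically holds under the hypotheses of the corollary.

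First I would check Assumption~\ref{mainass}. Condition~(i) is immediate because every $d_i=1$, so each diagonal block $Q_{ii}=(\rho_i)$ is scalar by construction. For condition~(ii), I would invoke Lemma~\ref{lemma1}~(iv): in the scalar case one has $v_i=1$ for every $i$, and the recursion for the $\alpha_{\theta_u}$ collapses to $\alpha_{\theta_\kappa}=1$ and $\alpha_{\theta_u}=q_{\theta_u\theta_{u+1}}$ for $u<\kappa$. Since admissibility of $\theta$ exactly means $q_{\theta_u\theta_{u+1}}\neq 0$ for all $u$, we obtain
\[
\alpha_\theta = q_{\theta_1\theta_2}q_{\theta_2\theta_3}\cdots q_{\theta_{\kappa-1}\theta_\kappa}\neq 0.
\]
The hypothesis $\pi_{\theta_1}\neq 0$ for some $\theta\in\mathcal{P}_{\max}$ then gives $\pi_{\theta_1}v_{\theta_1}=\pi_{\theta_1}\neq 0$, so Assumption~\ref{mainass}~(ii) is satisfied for this $\theta$.

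Next I would substitute into Theorem~\ref{Theorem3}. Part~(i) of the corollary is then a direct restatement of Theorem~\ref{Theorem3}~(i), with no further work required. For part~(ii), I would plug the scalar expressions $v_{\theta_1}=1$ and $\alpha_\theta=q_{\theta_1\theta_2}\cdots q_{\theta_{\kappa-1}\theta_\kappa}$ into the formula of Theorem~\ref{Theorem3}~(ii). Each factor $\alpha_\theta\pi_{\theta_1}v_{\theta_1}$, both in numerator and denominator, becomes $\pi_{\theta_1}q_{\theta_1\theta_2}\cdots q_{\theta_{\kappa-1}\theta_\kappa}$, and the factor $h_{\max}^+$ in the denominator of Theorem~\ref{Theorem3}~(ii) can be pulled out in front, yielding exactly the displayed expression.

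There is no real obstacle here: the entire content of the corollary is the verification that in the scalar case the quantities $v_{\theta_1}$ and $\alpha_\theta$ admit the elementary product description provided by Lemma~\ref{lemma1}~(iv), so that the general formula of Theorem~\ref{Theorem3} reduces to one phrased purely in terms of the entries $q_{ij}$ and the diagonal entries $\rho_i$ of $Q$. The only point that warrants explicit mention is that in the scalar case \emph{every} admissible $\theta$ automatically has $\alpha_\theta\neq 0$, so the condition in Assumption~\ref{mainass}~(ii) reduces solely to the explicit hypothesis $\pi_{\theta_1}\neq 0$ for some $\theta\in\mathcal{P}_{\max}$ stated in the corollary.
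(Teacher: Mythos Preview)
Your proposal is correct and follows essentially the same approach as the paper: compute $\alpha_\theta$ via Lemma~\ref{lemma1}~(iv) in the scalar case, observe that admissibility forces $\alpha_\theta=q_{\theta_1\theta_2}\cdots q_{\theta_{\kappa-1}\theta_\kappa}\neq 0$ and $v_{\theta_1}=1$, so that Assumption~\ref{mainass} holds and Theorem~\ref{Theorem3} applies directly. The paper's proof is slightly terser (it does not spell out that part~(i) of Assumption~\ref{mainass} is trivially met when every block is $1\times 1$), but the argument is the same.
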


\begin{proof}
In the scalar case considered here, one finds, for an admissible path
$\theta=(\theta_{1},\dots,\theta_{\kappa})$, that the constants $\alpha
_{\theta_{u}}$ defined in Lemma~\ref{lemma1}~(iv) are given by $\alpha
_{\theta_{\kappa}}=1$ and $\alpha_{\theta_{u}}=q_{\theta_{u}\theta_{u+1}}$ for
$u\in\{\kappa-1,\kappa-2,\dots,1\}$. Hence, we get
\[
\alpha_{\theta}\pi_{\theta_{1}}v_{\theta_{1}}=\pi_{\theta_{1}}q_{\theta
_{1}\theta_{2}}\cdots q_{\theta_{\kappa-1}\theta_{\kappa}}\not =0\,,
\]
and thus, the assertion follows from Theorem~\ref{Theorem3}.
\end{proof}

\begin{remark}
\label{Remark_same_pi}In the scalar case, suppose that all maximal admissible
paths begin in the same element, i.e.~$\theta_{1}=\theta_{1}^{\prime}$ for all
$\theta,\theta^{\prime}\in\mathcal{P}_{\max}$. Then the quasi-ergodic limits
are independent of the initial distribution $\pi$. In fact, here all scalars
$\pi_{\theta_{1}}$ and $\pi_{\theta_{1}^{\prime}}$ coincide and Corollary
\ref{Corollary6.37} yields for $\rho_{\ell}=\rho_{\max}$ and $\mathcal{P}%
_{\max}^{(\ell)}\not =\emptyset$%
\begin{align*}
&  ~\lim_{n\rightarrow\infty}\mathbb{E}_{\pi}\big[\tfrac{1}{n+1}%
\#\big\{m\in\{0,\dots,n\}:X_{m}\in I_{\ell}\big\}\big|T>n\big]\\
&  =\frac{1}{h_{\max}^{+}}\frac{\sum_{\theta\in\mathcal{P}_{\max}^{(\ell)}%
}q_{\theta_{1}\theta_{2}}\cdots q_{\theta_{\kappa-1}\theta_{\kappa}}%
\prod_{u\in H^{-}(\theta)}\frac{1}{\rho_{\max}-\rho_{\theta_{u}}}}%
{\sum_{\theta\in\mathcal{P}_{\max}}q_{\theta_{1}\theta_{2}}\cdots
q_{\theta_{\kappa-1}\theta_{\kappa}}\prod_{u\in H^{-}(\theta)}\frac{1}%
{\rho_{\max}-\rho_{\theta_{u}}}}\,.
\end{align*}

\end{remark}

Next we discuss the behaviour within the blocks. In order to deal with the
numerator in Corollary \ref{lemma2b} (ii), we consider for $\ell\in
\{1,\dots,k\}$ instead of the matrix $\hat{Q}^{(\ell)}$ the following matrix
for any $t\in\{1,\dots,d_{\ell}\}$ and $e_{t}\in\mathbb{R}^{d_{\ell}}$,
\[
\hat{Q}^{(\ell,t)}:=%
\begin{pmatrix}
Q_{11} &  &  &  &  & 0\\
\vdots & \ddots &  &  &  & \\
Q_{\ell1} & \hdots & Q_{\ell\ell} &  &  & \\
&  & e_{t}e_{t}^{\top} & Q_{\ell\ell} &  & \\
&  &  & \vdots & \ddots & \\
0 &  &  & Q_{k\ell} & \hdots & Q_{kk}%
\end{pmatrix}
\,.
\]
This matrix is also of the form \eqref{P_Q} with one more block, i.e.~$k+1$
blocks. Denote the elements of $\Gamma_{\kappa+1}(n+1-\kappa)$ by $\hat{\eta
}=(\eta_{1},\dots,\eta_{\ell},\hat{\eta}_{\ell},\dots,\eta_{\kappa})$ and
define,
\[
\hat{Q}^{(\ell,t)}(\theta,n)=\hspace{-3ex}\sum_{\hat{\eta}\in\Gamma_{\kappa
+1}(n+1-\kappa)}Q_{\theta_{1}\theta_{1}}^{\eta_{1}}Q_{\theta_{1}\theta_{2}%
}\cdots Q_{\theta_{u-1}\ell}Q_{\ell\ell}^{\eta_{\ell}}e_{t}e_{t}^{\top}%
Q_{\ell\ell}^{\hat{\eta}_{\ell}}Q_{\ell\theta_{u+1}}\cdots Q_{\theta
_{\kappa-1}\theta_{\kappa}}Q_{\theta_{\kappa}\theta_{\kappa}}^{\eta_{\kappa}}.
\]
The following proposition applies to the terms in the numerator of Corollary
\ref{lemma2b}~(ii). It is convenient to change the notation slightly by
writing $u^{(i)}$ and $v^{(i)}$ instead of $u_{i}$ and $v_{i}$ for the left
and right eigenvectors, respectively, of the Perron--Frobenius eigenvalue of
$Q_{ii}$.

\begin{proposition}
\label{propblocks} Consider a matrix $Q$ of the form \eqref{P_Q}, numbers
$\ell\in\{1,\dots,k\}$ and $t\in\{1,\dots,d_{\ell}\}$, and an admissible path
$\theta=(\theta_{1},\dots,\theta_{\kappa})\in\mathcal{P}^{(\ell)}$.
Furthermore, suppose that $\rho_{\ell}=\rho(\theta)$ and that for all
$u\in\{1,\dots,\kappa\}$ with $\rho_{\theta_{u}}<\rho(\theta)$,
the diagonal term $Q_{\theta_{u}\theta_{u}}$ is scalar. Then
\begin{equation}
\hat{Q}^{(\ell,t)}(\theta,n)=\sum_{r=0}^{n}Q(\underline{\theta^{\ell}}%
,r)e_{t}e_{t}^{\top}Q(\overline{\theta^{\ell}},n-r),\label{N4t}%
\end{equation}
and the following two statements hold:

\begin{itemize}
\item[(i)] If $\alpha_{\theta}\pi_{\theta_{1}}v^{(\theta_{1})}\not =0$, then
for the sequence $\pi_{\theta_{1}}\hat{Q}^{(\ell,t)}(\theta,n)\1 $, an
asymptotically equivalent sequence in the limit $n\rightarrow\infty$ is given
by
\[
u_{t}^{(\ell)}v_{t}^{(\ell)}\alpha_{\theta}\pi_{\theta_{1}}v^{(\theta_{1}%
)}\rho(\theta)^{n-\kappa}\frac{n^{h^{+}(\theta)}}{h^{+}(\theta)!}%
\prod_{u\in H^{-}(\theta)}\frac{1}{1-\frac{\rho_{\theta_{u}}}{\rho(\theta)}%
}\,,
\]
where $u^{(\ell)\top}=(u_{1}^{(\ell)},\dots,u_{d_{\ell}}^{(\ell)})$ and
$v^{(\ell)}=(v_{1}^{(\ell)},\dots,v_{d_{\ell}}^{(\ell)})^{\top}$ are the
positive left and right eigenvector of $Q_{\ell\ell}$ for $\rho_{\ell}$,
respectively, normalised in the sense of $\sum_{i=1}^{d_{\ell}}v_{i}^{(\ell
)}=1$ and $u^{(\ell)\top}v^{(\ell)}=1$.

\item[(ii)] If $\alpha_{\theta}\pi_{\theta_{1}}v^{(\theta_{1})}=0$, then the
exponential growth of the sequence $\pi_{\theta_{1}}\hat{Q}^{(\ell,t)}%
(\theta,n)\1 $ is equal to or less than $\rho(\theta)^{n+1-\kappa
}n^{h^{+}(\theta)-1}$.
\end{itemize}
\end{proposition}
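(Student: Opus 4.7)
The plan is to mirror the proof of Proposition~\ref{Nproposition4.24} closely, with one key modification: the identity block $\operatorname{Id}_{d_{\ell}}$ that appears in $\hat{Q}^{(\ell)}$ is replaced by the rank-one block $e_{t}e_{t}^{\top}$. First I would establish the combinatorial identity \eqref{N4t} exactly as in the derivation of \eqref{4.19a}: grouping the sum over $\hat{\eta}\in\Gamma_{\kappa+1}(n+1-\kappa)$ by the value of $\eta_{1}+\cdots+\eta_{\underline{\kappa}}$ produces the desired convolution on the right hand side, and the presence of $e_{t}e_{t}^{\top}$ sitting at the boundary between the two halves is inert in this combinatorial manipulation.

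Next, exactly in the style of \eqref{qhatformula} and \eqref{theta_hat}, I would introduce the admissible path
\[
\hat{\theta}^{(\ell,t)}:=(\theta_{1}+1,\dots,\theta_{u-1}+1,\ell+1,\ell,\theta_{u+1},\dots,\theta_{\kappa})
\]
of length $\kappa+1$ in the $(k+1)$-block matrix $\hat{Q}^{(\ell,t)}$ and observe that $\hat{Q}^{(\ell,t)}(\theta,n)=(\hat{Q}^{(\ell,t)})(\hat{\theta}^{(\ell,t)},n+1)$, so that Proposition~\ref{prop1} is applicable. The scalarity hypothesis transfers without change, since the diagonal blocks of $\hat{Q}^{(\ell,t)}$ are identical to those of $Q$ (with $Q_{\ell\ell}$ repeated) and $\rho_{\ell}=\rho(\theta)$ places the duplicated position in $H^{+}$, creating no new scalarity requirement.

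The one genuinely new computation concerns the constant $\alpha_{\hat{\theta}^{(\ell,t)}}$ from Lemma~\ref{lemma1}~(iv). All transitions between blocks other than the inserted one contribute exactly as in the proof of Proposition~\ref{Nproposition4.24} and produce the factor $\alpha_{\theta}$. At the transition across the inserted block, the off-diagonal is $e_{t}e_{t}^{\top}$ and the relevant right eigenvector is $v^{(\ell)}$; using the spectral projector $v^{(\ell)}u^{(\ell)\top}$ associated with $\mathbb{R}^{d_{\ell}}=\operatorname{span}(v^{(\ell)})\oplus V_{\ell}^{-}$, I would write $e_{t}=u_{t}^{(\ell)}v^{(\ell)}+w$ with $w\in V_{\ell}^{-}$ and compute
\[
e_{t}e_{t}^{\top}v^{(\ell)}=v_{t}^{(\ell)}e_{t}=v_{t}^{(\ell)}u_{t}^{(\ell)}v^{(\ell)}+v_{t}^{(\ell)}w\,.
\]
So the contribution of this single step is $v_{t}^{(\ell)}u_{t}^{(\ell)}$, giving $\alpha_{\hat{\theta}^{(\ell,t)}}=v_{t}^{(\ell)}u_{t}^{(\ell)}\alpha_{\theta}$. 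Since $\rho_{\ell}=\rho(\theta)$, one has $h^{+}(\hat{\theta}^{(\ell,t)})=h^{+}(\theta)+1$, while $H^{-}(\hat{\theta}^{(\ell,t)})$ inherits the products $\prod_{u\in H^{-}(\theta)}\tfrac{1}{1-\rho_{\theta_{u}}/\rho(\theta)}$ unchanged.

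Substituting these into Proposition~\ref{prop1}~(i) applied with $n$ replaced by $n+1$ and $\kappa$ by $\kappa+1$, and using $(n+1)^{h^{+}(\theta)}\sim n^{h^{+}(\theta)}$, delivers the claimed asymptotic equivalent in (i); assertion (ii) is then immediate from Proposition~\ref{prop1}~(ii) applied to $\hat{\theta}^{(\ell,t)}$, since the hypothesis $\alpha_{\theta}\pi_{\theta_{1}}v^{(\theta_{1})}=0$ forces $\alpha_{\hat{\theta}^{(\ell,t)}}\pi_{\theta_{1}}v^{(\theta_{1})}=0$. The main obstacle I anticipate is bookkeeping: one must carefully check that every hypothesis of Proposition~\ref{prop1} is fulfilled by $\hat{Q}^{(\ell,t)}$ and $\hat{\theta}^{(\ell,t)}$, in particular that the residual $w\in V_{\ell}^{-}$ produced at the rank-one insertion is absorbed into the subdominant terms by the same recursive mechanism used there, and that the combinatorial bookkeeping of $H^{+}$ and $H^{-}$ is consistent with the reindexing induced by passing from $Q$ to $\hat{Q}^{(\ell,t)}$.
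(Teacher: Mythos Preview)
Your proposal is correct and follows essentially the same approach as the paper: reduce to Proposition~\ref{prop1} applied to the extended matrix $\hat{Q}^{(\ell,t)}$ along the path $\hat{\theta}^{(\ell)}$ of length $\kappa+1$, and identify the extra contribution $u_{t}^{(\ell)}v_{t}^{(\ell)}$ to the constant $\alpha$ coming from the rank-one insertion $e_{t}e_{t}^{\top}$. Your verification of this factor via the spectral projector $v^{(\ell)}u^{(\ell)\top}$ (writing $e_{t}=u_{t}^{(\ell)}v^{(\ell)}+w$ with $w\in V_{\ell}^{-}$) is in fact slightly more direct than the paper's, which instead sets $\tilde{w}=e_{t}e_{t}^{\top}v^{(\ell)}-u_{t}^{(\ell)}v_{t}^{(\ell)}v^{(\ell)}$ and confirms $\tilde{w}\in V_{\ell}^{-}$ by checking $u^{(\ell)\top}Q_{\ell\ell}^{n}\tilde{w}=0$ for all $n$.
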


\begin{proof}
We proceed as in Proposition~\ref{Nproposition4.24}, where we have used
Proposition~\ref{prop1} with the matrix $\hat{Q}^{(\ell)}$ instead of the
matrix $Q$. Here we can argue analogously using the matrix $\hat{Q}^{(\ell
,t)}$ instead of $Q$.

In a first step we verify equality (\ref{N4t}), i.e.
\begin{align*}
&  \sum_{\hat{\eta}\in\Gamma_{\kappa+1}(n+1-\kappa)}Q_{\theta_{1}\theta_{1}%
}^{\eta_{1}}Q_{\theta_{1}\theta_{2}}\cdots Q_{\theta_{u-1}\ell}Q_{\ell\ell
}^{\eta_{\ell}}e_{t}e_{t}^{\top}Q_{\ell\ell}^{\hat{\eta}_{\ell}}Q_{\ell
\theta_{u+1}}\cdots Q_{\theta_{\kappa-1}\theta_{\kappa}}Q_{\theta_{\kappa
}\theta_{\kappa}}^{\eta_{\kappa}}\\
&  =\sum_{r=0}^{n}\sum_{\substack{\eta\in\Gamma_{\underline{\kappa}%
}(r+1-\underline{\kappa})\,,\\\zeta\in\Gamma_{\overline{\kappa}}%
(n-r+1-\overline{\kappa})}}Q_{\theta_{1}\theta_{1}}^{\eta_{1}}\cdots
Q_{\ell\ell}^{\eta_{\underline{\kappa}}}e_{t}e_{t}^{\top}Q_{\ell\ell}%
^{\zeta_{1}}\cdots Q_{\theta_{\kappa}\theta_{\kappa}}^{\zeta_{\overline
{\kappa}}}\,.
\end{align*}
This follows as in the proof of equality (\ref{4.19a}) in
Proposition~\ref{Nproposition4.24}.

Now, we will apply Proposition \ref{prop1} to the matrix $\hat{Q}^{(\ell
,t)}(\theta,n)$ using the admissible sequence $\hat{\theta}^{(\ell)}$ defined
in (\ref{theta_hat}). The only difference to the proof of
Proposition~\ref{Nproposition4.24} occurs when we determine the factor
$\alpha_{\theta}$ from Proposition~\ref{prop1}. In the earlier proof, the
block in row $\ell+1$ and column $\ell$ of the matrix $\hat{Q}^{(\ell)}$ is
the identity matrix, hence this did not lead to a change in this factor in
Proposition~\ref{Nproposition4.24}. Here, however, the identity matrix is
replaced by $e_{t}e_{t}^{\top}$, and we will show that we get the additional
factor $\tilde{\alpha}=u_{t}^{(\ell)}v_{t}^{(\ell)}$.

To determine this additional factor $\tilde{\alpha}$, we have to uniquely
decompose
\[
e_{t}e_{t}^{\top}v^{(\ell)}=\tilde{\alpha}v^{(\ell)}+\tilde{w}\,,
\]
where $\tilde{\alpha}\in\mathbb{R}$ and $\tilde{w}\in V_{\ell}^{-}$ as in
Lemma~\ref{lemma1}~(iv). We show now that with $\tilde{\alpha}=u_{t}^{(\ell
)}v_{t}^{(\ell)}$, we have
\[
\tilde{w}=e_{t}e_{t}^{\top}v^{(\ell)}-u_{t}^{(\ell)}v_{t}^{(\ell)}v^{(\ell
)}\in V_{\ell}^{-}\,.
\]
To see this, we determine the exponential growth of $Q_{\ell\ell}^{n}\tilde
{w}$ in the limit $n\rightarrow\infty$. Using that $e_{t}e_{t}^{\top}%
v^{(\ell)}=(0,\dots,0,v_{t}^{(\ell)},0,\dots,0)^{\top}$, we have
\[
Q_{\ell\ell}^{n}\tilde{w}=Q_{\ell\ell}^{n}(0,\dots,0,v_{t}^{(\ell)}%
,0,\dots,0)^{\top}-u_{t}^{(\ell)}v_{t}^{(\ell)}\underbrace{Q_{\ell\ell}%
^{n}v^{(\ell)}}_{=\rho_{\ell}^{n}v^{(\ell)}}\,.
\]
We multiply with the left eigenvector $u^{(\ell)\top}$ and get
\begin{align*}
u^{(\ell)\top}Q_{\ell\ell}^{n}\tilde{w}  &  =u^{(\ell)\top}Q_{\ell\ell}%
^{n}(0,\dots,0,v_{t}^{(\ell)},0,\dots,0)^{\top}-u_{t}^{(\ell)}v_{t}^{(\ell
)}\rho_{\ell}^{n}u^{(\ell)\top}v^{(\ell)}\\
&  =\rho_{\ell}^{n}u^{(\ell)\top}(0,\dots,0,v_{t}^{(\ell)},0,\dots,0)^{\top
}-u_{t}^{(\ell)}v_{t}^{(\ell)}\rho_{\ell}^{n}=0\,.
\end{align*}
This shows that $Q_{\ell\ell}^{n}\tilde{w}$ is in the orthogonal complement of
the one-dimensional space $\operatorname{span}(u^{(\ell)})$. Since
$\operatorname{span}(v^{(\ell)})$ is not orthogonal to $\operatorname{span}%
(u^{(\ell)})$, it follows that $\tilde{w}\in V_{\ell}^{-}$, because otherwise,
the component relating to $\operatorname{span}(v^{(\ell)})$ would become dominant.
\end{proof}

This observation can be used to determine the quasi-ergodic behaviour within
the blocks.

\begin{theorem}
\label{Theorem4}Assume the setting of Theorem~\ref{Theorem3}, and let $\ell
\in\{1,\dots,k\}$ such that $\rho_{\ell}=\rho_{\max}$ and $\mathcal{P}_{\max
}^{(\ell)}\not =\emptyset$. Then for all $s\in I_{\ell}$, we have
\begin{align*}
&  ~\lim_{n\rightarrow\infty}\mathbb{E}_{\pi}\left[  \tfrac{1}{n+1}%
\#\big\{m\in\{0,\dots,n\}:X_{m}=s\big\}\big|T>n\right] \\
&  =u_{t(s)}^{(\ell)}v_{t(s)}^{(\ell)}\frac{1}{h_{\max}^{+}}\frac{\sum
_{\theta\in\mathcal{P}_{\max}^{(\ell)}}\alpha_{\theta}\pi_{\theta_{1}%
}v_{\theta_{1}}\prod_{u\in H^{-}(\theta)}\frac{1}{\rho_{\max}-\rho_{\theta
_{u}}}}{\sum_{\theta\in\mathcal{P}_{\max}}\alpha_{\theta}\pi_{\theta_{1}%
}v_{\theta_{1}} \prod_{u\in H^{-}(\theta)}\frac{1}{\rho_{\max}-\rho
_{\theta_{u}}}}\,,
\end{align*}
where $t(s):=s-\sum_{i=1}^{\ell-1}d_{i}$. Here $u^{(\ell)\top}=(u_{1}^{(\ell
)},\dots,u_{d_{\ell}}^{(\ell)})$ and $v^{(\ell)}=(v_{1}^{(\ell)}%
,\dots,v_{d_{\ell}}^{(\ell)})^{\top}$ are the positive left and right
eigenvector of $Q_{\ell\ell}$ for $\rho_{\ell}$, respectively, normalised in
the sense of $\sum_{i=1}^{d_{\ell}}v_{i}^{(\ell)}=1$ and $u^{(\ell)\top
}v^{(\ell)}=1$.
\end{theorem}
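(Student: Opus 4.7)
The plan is to run the proof of Theorem~\ref{Theorem3} almost verbatim, replacing Corollary~\ref{lemma2b}~(i) by (ii) and Proposition~\ref{Nproposition4.24} by Proposition~\ref{propblocks}. All substantive asymptotic work has already been done in the latter; Theorem~\ref{Theorem4} will be a bookkeeping consequence of combining the earlier estimates, with the only new feature being that an extra scalar factor $u_{t(s)}^{(\ell)}v_{t(s)}^{(\ell)}$ appears in the numerator.

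First I would use Corollary~\ref{lemma2b}~(ii) together with identity~\eqref{N4t} to write
\[
\mathbb{E}_{\pi}\big[\tfrac{1}{n+1}\#\{m\in\{0,\dots,n\}:X_{m}=s\}\big|T>n\big]
=\frac{\sum_{\theta\in\mathcal{P}^{(\ell)}}\pi_{\theta_{1}}\hat{Q}^{(\ell,t(s))}(\theta,n)\1}{(n+1)\sum_{\theta\in\mathcal{P}}\pi_{\theta_{1}}Q(\theta,n)\1}\,.
\]
The denominator is identical to the one analysed in the proof of Theorem~\ref{Theorem3}, so I can re-use that analysis without change: under Assumption~\ref{mainass}, Proposition~\ref{prop1} gives that it is asymptotically equivalent to $\sum_{\theta\in\mathcal{P}_{\max}}\Psi_{n}(\theta)$ with exponential growth $\rho_{\max}^{n}n^{h_{\max}^{+}}$, and contributions from $\theta\notin\mathcal{P}_{\max}$ or with $\alpha_{\theta}\pi_{\theta_{1}}v_{\theta_{1}}=0$ are negligible.

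Next I would turn to the numerator. Because $\rho_{\ell}=\rho_{\max}$, the hypothesis $\rho_{\ell}=\rho(\theta)$ of Proposition~\ref{propblocks} is met exactly when $\theta\in\mathcal{P}_{\max}^{(\ell)}$. For such $\theta$ with $\alpha_{\theta}\pi_{\theta_{1}}v_{\theta_{1}}\neq 0$, Proposition~\ref{propblocks}~(i) produces an asymptotic equivalent of the form $u_{t(s)}^{(\ell)}v_{t(s)}^{(\ell)}\,\Xi_{n}(\theta)$, where $\Xi_{n}(\theta)$ is exactly the sequence appearing in the proof of Theorem~\ref{Theorem3}. Paths in $\mathcal{P}^{(\ell)}\setminus\mathcal{P}_{\max}^{(\ell)}$ satisfy either $\rho(\theta)<\rho_{\max}$ or $h^{+}(\theta)<h_{\max}^{+}$, so their contributions have strictly smaller exponential growth; and paths in $\mathcal{P}_{\max}^{(\ell)}$ with $\alpha_{\theta}\pi_{\theta_{1}}v_{\theta_{1}}=0$ are negligible by Proposition~\ref{propblocks}~(ii). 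Hence the numerator is asymptotically equivalent to $u_{t(s)}^{(\ell)}v_{t(s)}^{(\ell)}\sum_{\theta\in\mathcal{P}_{\max}^{(\ell)}}\Xi_{n}(\theta)$.

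Finally, the scalar $u_{t(s)}^{(\ell)}v_{t(s)}^{(\ell)}$ is independent of $\theta$ and factors out of the numerator. The remaining ratio $\sum_{\theta\in\mathcal{P}_{\max}^{(\ell)}}\Xi_{n}(\theta)\big/\sum_{\theta\in\mathcal{P}_{\max}}\Psi_{n}(\theta)$ is precisely the ratio already computed in the proof of Theorem~\ref{Theorem3}~(ii), and it produces the $1/h_{\max}^{+}$ factor together with the quotient of sums of the form $\sum\alpha_{\theta}\pi_{\theta_{1}}v_{\theta_{1}}\prod_{u\in H^{-}(\theta)}(\rho_{\max}-\rho_{\theta_{u}})^{-1}$, yielding the claimed formula. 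The one step I would double-check carefully is that the extra factor $u_{t(s)}^{(\ell)}v_{t(s)}^{(\ell)}$ really is the only modification to the constant produced by the iterated decomposition in Lemma~\ref{lemma1}~(iv) when $e_{t(s)}e_{t(s)}^{\top}$ is inserted in place of $\operatorname{Id}_{d_{\ell}}$, with the remaining factors $\alpha_{\theta_{u}}$ ($u<\kappa$) unchanged. This is exactly what Proposition~\ref{propblocks} has already established, so no further calculation is required.
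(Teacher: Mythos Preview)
Your proposal is correct and matches the paper's own proof, which is the single sentence ``the proof is the same as the proof of Theorem~\ref{Theorem3}, with the difference that instead of Proposition~\ref{Nproposition4.24}, we need to use Proposition~\ref{propblocks}.'' One small slip: since $\rho_{\ell}=\rho_{\max}$ and $\ell$ lies on every $\theta\in\mathcal{P}^{(\ell)}$, the hypothesis $\rho_{\ell}=\rho(\theta)$ of Proposition~\ref{propblocks} is met for \emph{all} $\theta\in\mathcal{P}^{(\ell)}$, not ``exactly when $\theta\in\mathcal{P}_{\max}^{(\ell)}$''; this does not affect your argument, because you correctly discard the non-maximal paths via the strictly smaller polynomial factor $n^{h^{+}(\theta)}$ with $h^{+}(\theta)<h_{\max}^{+}$.
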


\begin{proof}
The proof is the same as the proof of Theorem~\ref{Theorem3}, with the
difference that instead of Proposition~\ref{Nproposition4.24}, we need to use
Proposition~\ref{propblocks} here.
\end{proof}

Finally, we consider the case that some of the blocks $Q_{ii}$ are not
eventually positive.

\begin{remark}
[The irreducible case]\label{Remark_periodic}Consider a matrix $Q$ in
Frobenius normal form \eqref{P_Q} with diagonal matrices $Q_{ii}$ which are
irreducible and possibly periodic. Then there exists an $N\in\mathbb{N}$ such
that the diagonal blocks of $Q^{N}$ are eventually positive. Denote by
$\tilde{X}$ the Markov chain induced by $Q^{N}$ and the corresponding stopping
time by $\tilde{T}$. We get for any $j\in\{1,\dots,k\}$,%
\begin{align*}
&  \lim_{n\rightarrow\infty}\mathbb{E}_{\pi}\left[  \tfrac{1}{n+1}%
\#\big\{m\in\{0,\dots,n\}:X_{m}=j\big\}\big|T\geq n\big\}\right] \\
&  =\lim_{n\rightarrow\infty}\sum_{i=0}^{N-1}\mathbb{E}_{\pi}\left[  \tfrac
{1}{n+1}\#\big\{m\in N\mathbb{Z}+i:m\leq n\mbox{ and }X_{m}=j\big\}\big|T\geq
n\big\}\right] \\
&  =\lim_{n\rightarrow\infty}\sum_{i=0}^{N-1}\mathbb{E}_{\pi Q^{i}}\left[
\tfrac{1}{n+1}\#\big\{m\in N\mathbb{Z}+i:m\leq n\mbox{ and }\tilde{X}%
_{\frac{m-i}{N}}=j\big\}\big|T\geq n\big\}\right] \\
&  =\lim_{n\rightarrow\infty}\sum_{i=0}^{N-1}\mathbb{E}_{\pi Q^{i}}\left[
\tfrac{1}{Nn+1}\#\big\{m\in\{0,\dots,n\}:\tilde{X}_{m}=j\big\}\big|T\geq
Nn\big\}\right] \\
&  =\frac{1}{N}\sum_{i=0}^{N-1}\lim_{n\rightarrow\infty}\mathbb{E}_{\pi Q^{i}%
}\left[  \tfrac{1}{n+1}\#\big\{m\in\{0,\dots,n\}:\tilde{X}_{m}%
=j\big\}\big|\tilde{T}\geq n\big\}\right]  \,.
\end{align*}
Hence corresponding formulas for the case of not eventually positive matrices
follow from the formulas for the eventually positive case.
\end{remark}

\section{Examples}\label{sec4}

We present a number of examples illustrating
the previous results and we start with the following simple example from Bena\"{\i}m, Cloez, Panloup
\cite[Example 3.5]{BenaCP16}, slightly modified in order to get a lower
diagonal matrix $Q$.

\begin{example}
\label{Example_BCP}Consider the transition matrix given by%
\[
Q=\begin{pmatrix}
\rho_{1} & 0\\
1-\rho_{2} & \rho_{2}%
\end{pmatrix}\,,
\quad \text{ where }0<\rho_{1}<\rho_{2}<1\,.
\]
This example is reducible with scalar blocks and $k=2$. The maximal admissible
paths are $\theta=(2)$ with $\kappa(\theta)=1$ and $\theta^{\prime}=(2,1)$
with $\kappa(\theta^{\prime})=2$, hence $\mathcal{P}_{\max}%
=\{(2),(2,1)\}=\mathcal{P}_{\max}^{(2)}$ and $\mathcal{P}_{\max}^{(1)}=(2,1)$.
Furthermore, $H^{+}(\theta)=\{2\}=H^{+}(\theta^{\prime})$ and $H^{-}%
(\theta^{\prime})=\{1\}$ and hence $h_{\max}^{+}=1$. Applying Corollary~\ref{Corollary6.37} to this example, one finds that for $\pi_{2}>0$, the
quasi-ergodic limit for $\ell=1$ is given by
\[
~\lim_{n\rightarrow\infty}\mathbb{E}_{\pi}\left[  \tfrac{1}{n+1}%
\#\big\{m\in\{0,\dots,n\}:X_{m}=1\big\}\big|T>n\right]  =0\,,
\]
since $\rho_1<\rho_{\max}$, and for $\ell=2$, we have
\begin{displaymath}
 ~\lim_{n\rightarrow\infty}\mathbb{E}_{\pi}\left[  \tfrac{1}{n+1}%
\#\big\{m\in\{0,\dots,n\}:X_{m}=2\big\}\big|T>n\right] = 1\,,
\end{displaymath}
since these two probabilities sum up to 1.
The classification of the quasi-stationary distributions in van Doorn and
Pollett \cite{vanDooPol09}, in particular \cite[Theorem 4.3]{vanDooPol09},
shows that for this example there is a unique quasi-stationary distribution
given by the normalised left eigenvector $u^{\top}=\big(  \frac{1-\rho_{2}%
}{1-\rho_{1}},\frac{\rho_{2}-\rho_{1}}{1-\rho_{1}}\big)$ to the eigenvalue $\rho_2$ of the matrix~$Q$.
\end{example}

Next we consider a class of three-dimensional matrices.

\begin{example}\label{exam42}
Let $\rho\in(0,1)$, and consider
\[
Q=\begin{pmatrix}
\rho & 0 & 0\\
q_{21} & \rho & 0\\
q_{31} & q_{32} & \rho
\end{pmatrix}\,.
\]
In this scalar case, one finds that under the condition $q_{21},q_{32}\not =0$, one has
\[
\mathcal{P}_{\max}=\{\theta=(3,2,1)\}\quad\text{ with }\quad h_{\max}^{+}=h^{+}%
(\theta)=3\,.
\]
Corollary \ref{Corollary4.16} yields that for $\ell\in\{1,2,3\}$, the quasi-ergodic
limits are independent of the initial distribution $\pi$, and we have
\[
\lim_{n\rightarrow\infty}\mathbb{E}_{\pi}\left[  \frac{1}{n+1}\#\{m\in
\{0,\dots,n\}:X_{m}=\ell\}\left\vert T\geq n\right.  \}\right]  =\frac{1}{h_{\max
}^{+}}=\frac{1}{3}.
\]
The situation is different if instead we assume that $q_{21},q_{31}\not =0$ and $q_{32}=0$. Then
\begin{align*}
\mathcal{P}_{\max}  &  =\{\theta=(3,1),\theta^{\prime}=(2,1)\}=\mathcal{P}%
_{\max}^{(1)}\,,\\
h_{\max}^{+}  &  =h^{+}(\theta)=h^{+}(\theta^{\prime})=2\text{ and }%
\kappa(\theta)=\kappa(\theta^{\prime})=2\,.
\end{align*}
Here\ $\mathcal{P}_{\max}^{(2)}=\{\theta^{\prime}\}$ and $\mathcal{P}_{\max
}^{(3)}=\{\theta\}$. Corollary \ref{Corollary6.37} yields that for $\pi
_{1}<1$,
\begin{align*}
~\lim_{n\rightarrow\infty}\mathbb{E}_{\pi}\left[  \tfrac{1}{n+1}%
\#\big\{m\in\{0,\dots,n\}:X_{m}=1\big\}\big|T>n\right]   &  =\frac{1}{2}\,,\\
~\lim_{n\rightarrow\infty}\mathbb{E}_{\pi}\left[  \tfrac{1}{n+1}%
\#\big\{m\in\{0,\dots,n\}:X_{m}=2\big\}\big|T>n\right]   &  =\frac{1}{2}%
\frac{\pi_{2}q_{21}}{\pi_{2}q_{21}+\pi_{3}q_{31}}\,,\\
~\lim_{n\rightarrow\infty}\mathbb{E}_{\pi}\left[  \tfrac{1}{n+1}%
\#\big\{m\in\{0,\dots,n\}:X_{m}=3\big\}\big|T>n\right]   &  =\frac{1}{2}%
\frac{\pi_{3}q_{31}}{\pi_{2}q_{21}+\pi_{3}q_{31}}\,.
\end{align*}
Thus, the quasi-ergodic measure is%
\[
\left(  \frac{1}{2},\frac{1}{2}\frac{\pi_{2}q_{21}}{\pi_{2}q_{21}+\pi
_{3}q_{31}},\frac{1}{2}\frac{\pi_{3}q_{31}}{\pi_{2}q_{21}+\pi_{3}q_{31}%
}\right)  .
\]
This case illustrates in particular that the lower diagonal entries $Q_{ij}$ for $i>j$ are relevant for the quasi-ergodic limits, which also may depend on the
initial distribution $\pi$.
\end{example}

In the next example, a Perron--Frobenius eigenvalue $\rho_{i}<\rho_{\max}$ is
present, and the two maximal admissible paths start in the same element and have
different lengths.

\begin{example}
\label{Example8.3_NEW}Consider%
\[
Q=\begin{pmatrix}
\rho & 0 & 0 & 0\\
0 & \rho & 0 & 0\\
0 & q_{32} & \rho_{3} & 0\\
q_{41} & 0 & q_{43} & \rho
\end{pmatrix}
\]
with $0<\rho_{3}<\rho<1$ and $q_{32},q_{41},q_{43}\not =0$. We have
\begin{align*}
\mathcal{P}_{\max}  &  =\{\theta=(4,3,2),\theta^{\prime}=(4,1)\}=\mathcal{P}%
_{\max}^{(4)},\\
h_{\max}^{+}  &  =h^{+}(\theta)=h^{+}(\theta^{\prime})=2\text{ and }%
\kappa(\theta)=3,\kappa(\theta^{\prime})=2,
\end{align*}
and%
\[
\mathcal{P}_{\max}^{(1)}=\{\theta^{\prime}=(4,1)\},\mathcal{P}_{\max}%
^{(2)}=\mathcal{P}_{\max}^{(3)}=\{\theta=(4,3,2)\}.
\]
Since both maximal paths start in the same element, Remark
\ref{Remark_same_pi} yields that the quasi-ergodic limits do not depend on
$\pi$ provided $\pi_{4}\not =0$. Corollary \ref{Corollary6.37} yields that in this case, the quasi-ergodic measure is given by
\[
\left(  \frac{1}{2}\frac{q_{41}}{q_{41}+q_{43}q_{32}\frac{1}{\rho-\rho_{3}}},\frac{1}{2}\frac{q_{43}q_{32}\frac{1}{\rho-\rho_{3}}}{q_{41}%
+q_{43}q_{32}\frac{1}{\rho-\rho_{3}}},0,\frac{1}{2}\right)  .
\]
\end{example}

In the next example, three maximal paths are present, they have different
lengths, and start in different elements.

\begin{example}
Consider%
\[
Q=%
\begin{pmatrix}
\rho_{1} & 0 & 0 & 0 & 0\\
0 & \rho & 0 & 0 & 0\\
q_{31} & 0 & \rho & 0 & 0\\
0 & 0 & q_{43} & \rho & 0\\
0 & q_{52} & 0 & 0 & \rho
\end{pmatrix}
\]
with $0<\rho_{1}<\rho<1$ and $q_{31},q_{43},q_{52}\not =0$. We have
\[
\mathcal{P}_{\max}=\{(5,2),(4,3),(4,3,1)\},h_{\max}^{+}=2,\kappa
(5,2)=\kappa(4,3)=2,\kappa(4,3,1)=3,
\]
and%
\[
\mathcal{P}_{\max}^{(1)}=\{(4,3,1)\},\mathcal{P}_{\max}^{(2)}=\mathcal{P}%
_{\max}^{(5)}=\{(5,2)\},\mathcal{P}_{\max}^{(3)}=\mathcal{P}_{\max}%
^{(4)}=\{(4,3),(4,3,1)\}.
\]
Suppose that $\pi_{4}\not=0$ or $\pi_{5}\not =0$. Corollary \ref{Corollary6.37} yields
that%
\[
~\lim_{n\rightarrow\infty}\mathbb{E}_{\pi}\left[  \tfrac{1}{n+1}%
\#\big\{m\in\{0,\dots,n\}:X_{m}=1\big\}\big|T\geq n\big\}\right]  =0,
\]
and for $\ell=2,5$, we get
\begin{align*}
& \lim_{n\rightarrow\infty}\mathbb{E}_{\pi}\left[  \tfrac{1}{n+1}%
\#\big\{m\in\{0,\dots,n\}:X_{m}=\ell\big\}\big|T\geq n\big\}\right] \\
& = \frac
{1}{2}\frac{\pi_{5}q_{52}}{\pi_{4}q_{43}+\pi_{4}q_{43}q_{31}\frac
{1}{\rho-\rho_{1}}+\pi_{5}q_{52}}\,,
\end{align*}
and for $\ell=3,4$, we get
\begin{align*}
&\lim_{n\rightarrow\infty}\mathbb{E}_{\pi}\left[  \tfrac{1}{n+1}%
\#\big\{m\in\{0,\dots,n\}:X_{m}=3\big\}\big|T\geq n\big\}\right] \\
& =\frac{1}%
{2}\frac{\pi_{4}q_{43}+\pi_{4}q_{43}q_{31}\frac{1}{\rho-\rho_{1}}}{\pi_{4}q_{43}+\pi_{4}q_{43}q_{31}\frac{1}{\rho-\rho_{1}}+\pi_{5}q_{52}}\,.
\end{align*}
Suppose, for instance, that the diagonal terms are given by $\rho_{1}=0.5,\rho=0.75$
and $q_{31}=q_{43}=q_{52}=0.1$, and the initial distribution is of the form
$\pi=(\ast,\ast,\ast,0.5,0.3)$. Then an evaluation of the formulas above
yields the quasi-ergodic measure
\[
(0,0.15,0.35,0.35,0.15)\,.
\]
\end{example}

Finally, we present an example with a non-scalar diagonal matrix $Q_{\ell\ell
}\in\mathbb R^{d_\ell\times d_\ell}$, where $d_{\ell}>1$.

\begin{example}
Consider the matrix
\[
Q= \begin{pmatrix}
Q_{11} & 0\\
Q_{21} & \rho_{2}%
\end{pmatrix}
\in\mathbb{R}^{3\times3}\,,
\]
with an eventually positive matrix $Q_{11}\in\mathbb{R}^{2\times2}%
,0\not =Q_{21}\in\mathbb{R}^{1\times2}$, and $\rho_{2}>0$. Let the eigenvalues
$\rho_{1},\rho_{1}^{-}$ of $Q_{11}$ satisfy $\rho:=\rho_{1}>\left\vert
\rho_{1}^{-}\right\vert $ and $\rho_{1}>\rho_{2}$. Here the index sets are
$I_{1}=\{1,2\}$ and $I_{2}=\{3\}$, and we have
\[
\mathcal{P}_{\max}=\mathcal{P}_{\max}^{(1)}=\{(1),(2,1)\},\mathcal{P}_{\max
}^{(2)}=\{(2,1)\}\text{ and }h_{\max}^{+}=1\,.
\]
Suppose that $\alpha_{\theta}\not =0$ for $\theta=(1)$ or $(2,1)$ and the
initial distribution $\pi=(\pi^{(1)},\pi^{(2)})$ satisfies $\pi^{(2)}\not =0$.
Then Theorem \ref{Theorem4} yields
\[
\lim_{n\rightarrow\infty}\mathbb{E}_{\pi}\left[  \tfrac{1}{n+1}%
\#\big\{m\in\{0,\dots,n\}:X_{m}=3\big\}\big|T>n\right]  =0\,.
\]
Furthermore, let $u^{(1)}$ and $v^{(1)}$ be the left and right eigenvectors,
respectively, for the eigenvalue $\rho$ of $Q_{11}$, normalised by $v_{1}^{(1)}%
+v_{2}^{(1)}=1$ and $u^{(1)\top}v^{(1)}=1$. Using $\mathcal{P}_{\max
}=\mathcal{P}_{\max}^{(1)}$ one obtains for $s\in\{1,2\}$ (with $t(s)=s$) that
\begin{equation}
~\lim_{n\rightarrow\infty}\mathbb{E}_{\pi}\left[  \tfrac{1}{n+1}%
\#\big\{m\in\{0,\dots,n\}:X_{m}=s\big\}\big|T>n\right]  =u_{s}^{(1)}%
v_{s}^{(1)}. \label{Ex6.5a}%
\end{equation}
As a specific example, consider the eventually positive matrix%
\[
Q_{11}=\begin{pmatrix}
0.2 & 0.1\\
0.1 & 0
\end{pmatrix}
\quad \text{ with }\left(  Q_{11}\right)  ^{2}= \begin{pmatrix}
0.05 & 0.02\\
0.02 & 0.01
\end{pmatrix}
\]
and eigenvalues $\rho=\rho_{1}=0.1(1+\sqrt{2})$ and $\rho_{1}^{-}%
=0.1(1-\sqrt{2})$. The left and right normalised eigenvectors of $\rho$ are
\[
u^{(1)\top}=\frac{1}{2}\big(1+\sqrt{2},1\big)\quad\text{and}\quad v^{(1)}=\frac{1}{2+\sqrt{2}%
}\big(1+\sqrt{2},1\big)^{\top}.
\]
For $\theta=(1)$, one finds that the constant $\alpha_{\theta}=\alpha_{(1)}\not =0$, it is determined by the decomposition%
\[
\1 =\begin{pmatrix}
1\\
1
\end{pmatrix}
=\alpha_{\theta}v^{(1)}+w_{\theta}=\alpha_{(1)}\frac{1}{2+\sqrt{2}%
}\begin{pmatrix}
1+\sqrt{2}\\
1
\end{pmatrix}
+w_{\theta}\,,
\]
with $w_{\theta}$ in the eigenspace for the eigenvalue $\rho^{-}$ of $Q_{11}$.
Thus, for an initial distribution with $\pi^{(2)}\not =0$, one obtains from
\eqref{Ex6.5a} that
\begin{align*}
~\lim_{n\rightarrow\infty}\mathbb{E}_{\pi}\left[  \tfrac{1}{n+1}%
\#\big\{m\in\{0,\dots,n\}:X_{m}=1\big\}\big|T>n\right]   &  =\frac{3+2\sqrt
{2}}{4+2\sqrt{2}}\,,\\
~\lim_{n\rightarrow\infty}\mathbb{E}_{\pi}\left[  \tfrac{1}{n+1}%
\#\big\{m\in\{0,\dots,n\}:X_{m}=2\big\}\big|T>n\right]   &  =\frac{1}%
{4+2\sqrt{2}}\,.
\end{align*}

\end{example}

\appendix
\section{Proof of Proposition \ref{P_prop1}}

The proof of Proposition~\ref{P_prop1} provided in this appendix is based on methods from Darroch and Seneta~\cite{DarrS65}.

The following preparations are necessary.

\begin{lemma}
For any $j\in\{1,\dots,d\}$ and $n\in\mathbb{N}_{0}$, we have
\[
\sum_{\ell=0}^{n}\ell\,\mathbb{P}_{\pi}\big[\#\big\{m\in\{0,\dots
,n\}:X_{m}=j\big\}=\ell\mbox{ and }T=n+1\big]=\frac{\mathrm{d}}{\mathrm{d}%
z}\pi_{j}(z)Q_{j}^{n}(z)R\Big|_{z=1}\,.
\]

\end{lemma}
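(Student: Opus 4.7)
The plan is to interpret the polynomial $\pi_{j}(z)Q_{j}^{n}(z)R$ as a probability generating function, where the variable $z$ tracks the number of visits to state $j$.

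First, I would expand $\pi_{j}(z)Q_{j}^{n}(z)R = \pi D_{j}(z)\bigl(QD_{j}(z)\bigr)^{n}R$ as a sum over sample paths. Since $D_{j}(z)$ is diagonal with $z$ in the $(j,j)$ position and $1$ elsewhere, each factor $D_{j}(z)$ contributes a factor of $z$ precisely when the associated index equals $j$. Explicitly, expanding all matrix products gives
\[
\pi_{j}(z)Q_{j}^{n}(z)R=\sum_{s_{0},\dots,s_{n}=1}^{d} z^{\#\{k\in\{0,\dots,n\}:s_{k}=j\}}\,\pi_{s_{0}}Q_{s_{0}s_{1}}\cdots Q_{s_{n-1}s_{n}}R_{s_{n}}.
\]

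Next I would identify the coefficient structure probabilistically. By the Markov property, the product $\pi_{s_{0}}Q_{s_{0}s_{1}}\cdots Q_{s_{n-1}s_{n}}R_{s_{n}}$ equals $\mathbb{P}_{\pi}[X_{0}=s_{0},\dots,X_{n}=s_{n},T=n+1]$, since $R_{s_{n}}$ is exactly the probability of being absorbed (i.e., jumping to state $0$) from state $s_{n}$ in one step. Grouping paths according to the number of times they visit $j$ yields
\[
\pi_{j}(z)Q_{j}^{n}(z)R=\sum_{\ell=0}^{n}z^{\ell}\,\mathbb{P}_{\pi}\bigl[\#\{m\in\{0,\dots,n\}:X_{m}=j\}=\ell\text{ and }T=n+1\bigr].
\]

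Finally, differentiating both sides in $z$ and evaluating at $z=1$ gives the desired identity, since $\frac{\mathrm{d}}{\mathrm{d}z}z^{\ell}\bigl|_{z=1}=\ell$. The main obstacle here is really only bookkeeping: one must ensure the exponent of $z$ counts exactly the number of positions $k\in\{0,\dots,n\}$ at which $s_{k}=j$, which requires checking that the initial factor $\pi D_{j}(z)$ correctly handles the visit at time $0$ and that each of the $n$ factors $QD_{j}(z)$ correctly marks the visit at times $1,\dots,n$. Once this combinatorial accounting is verified, the identity follows immediately.
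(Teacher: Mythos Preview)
Your proposal is correct and follows essentially the same approach as the paper: both expand $\pi_{j}(z)Q_{j}^{n}(z)R$ as a sum over sample paths, identify the exponent of $z$ as the visit count to state $j$, recognise the resulting expression as a probability generating function, and differentiate at $z=1$. The only cosmetic difference is that the paper packages the weighted path probability into an auxiliary function $\gamma_{x_{0},\dots,x_{n}}(z)$, whereas you write the expansion out directly.
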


\begin{proof}
For a given finite sequence $(x_{0},x_{1},\dots,x_{n})\in\{1,\dots,d\}^{n+1}$,
the probability $\mathbb{P}_{\pi}[X_{i}=x_{i}\mbox{ for all }i\in
\{0,\dots,n\}\mbox{ and }T=n\big]$ is given by
\[
\pi_{x_{0}}q_{x_{0},x_{1}}q_{x_{1},x_{2}}\dots q_{x_{n-1},x_{n}}r_{x_{n}}\,,
\]
where $q_{ij}$ denote the entries of the matrix $Q$, and
\[
\pi Q^{n}R=\sum_{x_{0}=1}^{d}\sum_{x_{1}=1}^{d}\dots\sum_{x_{n}=1}^{d}%
\pi_{x_{0}}q_{x_{0},x_{1}}q_{x_{1},x_{2}}\dots q_{x_{n-1},x_{n}}r_{x_{n}}\,.
\]
Define for $j\in\{0,\dots,d\}$
\[
\gamma_{x_{0},x_{1},\dots,x_{n}}(z):=z^{\#\{m\in\{0,\dots,n\}:x_{m}=j\}}%
\pi_{x_{0}}q_{x_{0},x_{1}}q_{x_{1},x_{2}}\dots q_{x_{n-1},x_{n}}r_{x_{n}}\,.
\]
Then it follows that
\[
\pi_{j}(z)Q_{j}^{n}(z)R=\sum_{x_{0}=1}^{d}\sum_{x_{1}=1}^{d}\dots\sum
_{x_{n}=1}^{d}\gamma_{x_{0},x_{1},\dots,x_{n}}(z)\,.
\]
Hence, $z\mapsto\pi_{j}(z)Q_{j}^{n}(z)R$ is the probability generating
function for $\ell\in\{0,1,\dots,n\}$ having the probability $\mathbb{P}_{\pi
}\big[\#\big\{m\in\{0,\dots,n\}:X_{m}=j\big\}=\ell\mbox{ and }T=n+1\big]$.
This means that its expectation is given by $\frac{\mathrm{d}}{\mathrm{d}z}%
\pi_{j}(z)^{\top}Q_{j}^{n}(z)R\big|_{z=1}$, which finishes the proof of this lemma.
\end{proof}

We actually do not need this lemma, but the following lemma, which can be
proved analogously.

\begin{lemma}
\label{P_lemma1}For any $j\in\{1,\dots,d\}$ and $n\in\mathbb{N}_{0}$, we have
\[
\sum_{\ell=0}^{n}\ell\,\mathbb{P}_{\pi}\big[\#\big\{m\in\{0,\dots
,n\}:X_{m}=j\big\}=\ell\mbox{ and }T>n\big]=\frac{\mathrm{d}}{\mathrm{d}z}%
\pi_{j}(z)Q_{j}^{n}(z)\1 \Big|_{z=1}\,.
\]

\end{lemma}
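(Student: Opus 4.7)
The plan is to mirror the proof of the preceding lemma, substituting the column vector $\1$ for $R$ and correspondingly replacing the event $\{T = n+1\}$ by $\{T > n\}$. The essential observation is that $z \mapsto \pi_j(z) Q_j^n(z) \1$ is a probability generating function in which the variable $z$ tracks the number of visits of the Markov chain to state $j$ up to time $n$, conditioned (in the unnormalised sense) on non-absorption by time $n$.

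First, I would unfold the matrix product by direct computation. Recall $\pi_j(z) = \pi D_j(z)$ and $Q_j(z) = Q D_j(z)$, where $D_j(z)$ multiplies the $j$-th coordinate by $z$. For a finite sequence $(x_0, x_1, \dots, x_n) \in \{1,\dots,d\}^{n+1}$, expanding the product coordinate-wise yields a factor $z$ for each index $m \in \{0,\dots,n\}$ with $x_m = j$. Together with the entries $q_{ab}$ of $Q$, this gives
\[
\pi_j(z) Q_j^n(z) \1 = \sum_{(x_0,\dots,x_n) \in \{1,\dots,d\}^{n+1}} \pi_{x_0} q_{x_0,x_1} q_{x_1,x_2} \cdots q_{x_{n-1},x_n} z^{\#\{m \in \{0,\dots,n\}: x_m = j\}} .
\]

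Second, I would interpret the coefficients probabilistically. For each such sequence of transient states, the product $\pi_{x_0} q_{x_0,x_1} \cdots q_{x_{n-1},x_n}$ equals $\mathbb{P}_\pi[X_0 = x_0, \dots, X_n = x_n]$, and since the sequence remains in $\{1,\dots,d\}$ this is identically $\mathbb{P}_\pi[X_0 = x_0, \dots, X_n = x_n \mbox{ and } T > n]$. Grouping the sequences by the value $\ell$ of $\#\{m\in\{0,\dots,n\}: x_m = j\}$ yields
\[
\pi_j(z) Q_j^n(z) \1 = \sum_{\ell=0}^{n} z^{\ell}\, \mathbb{P}_\pi\big[\#\{m\in\{0,\dots,n\}:X_m = j\} = \ell \mbox{ and } T > n\big] .
\]

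Finally, differentiating this polynomial identity in $z$ term by term and evaluating at $z=1$ gives precisely the claimed formula. No genuine obstacle arises: the previous lemma already laid out the bookkeeping for the generating-function argument with $R$ in place of $\1$, and the only difference here is the probabilistic meaning of the endpoint factor $\1$ (non-absorption through time $n$ rather than absorption at time $n+1$). The main care needed is verifying the first matrix-expansion identity, which is a direct consequence of the definition of $D_j(z)$.
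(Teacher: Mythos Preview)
Your proposal is correct and follows exactly the route the paper indicates: the paper simply states that this lemma ``can be proved analogously'' to the preceding one, and you have spelled out precisely that analogous argument, replacing $R$ by $\1$ and correspondingly replacing the event $\{T=n+1\}$ by $\{T>n\}$. One tiny quibble (present already in the paper's statement): the count $\#\{m\in\{0,\dots,n\}:X_m=j\}$ can take values up to $n+1$, so the sum over $\ell$ should in principle run to $n+1$ rather than $n$; this does not affect the argument.
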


We now start the proof of Proposition \ref{P_prop1}.

\begin{proof}
[Proof of Proposition \ref{P_prop1}](i) Note first that $\mathbb{P}_{\pi}(T>
n)=\pi Q^{n}\1 $. We get%

\begin{align*}
&  \mathbb{E}_{\pi}\big[\tfrac{1}{n+1}\#\big\{m\in\{0,\dots,n\}:X_{m}%
=j\big\}\big|T>n\big]\\
&  =\tfrac{1}{n+1}\sum_{\ell=0}^{n}\ell\,\frac{\mathbb{P}_{\pi}%
\big[\#\big\{m\in\{0,\dots,n\}:X_{m}=j\big\}=\ell\mbox{ and }T>n\big]}%
{\mathbb{P}_{\pi}(T>n)}\\
&  \overset{\text{Lemma }\ref{P_lemma1}}{=}\frac{\frac{\mathrm{d}}%
{\mathrm{d}z}\pi_{j}(z)Q_{j}^{n}(z)\1 \Big|_{z=1}}{(n+1)\pi
Q^{n}\1 }.
\end{align*}

(ii) The product rule implies%
\[
\frac{\mathrm{d}}{\mathrm{d}z}Q_{j}^{n}(z)=\sum_{m=0}^{n-1}Q_{j}^{m}(z)\left(
\frac{\mathrm{d}}{\mathrm{d}z}Q_{j}(z)\right)  Q_{j}^{n-1-m}(z),
\]
and hence
\[
\frac{\mathrm{d}}{\mathrm{d}z}Q_{j}^{n}(z)\Big|_{z=1}=\sum_{m=0}^{n-1}%
Q^{m}q_{j}e_{j}^{\top}Q^{n-1-m},
\]
where $q_{j}$ is the $j$th column of $Q$, and $e_{j}$ is the $j$th unit
vector. If $Q$ is eventually positive, Seneta \cite[Theorem 1.2]{Sene06}
implies that
\begin{equation}
Q^{m}=\rho^{m}vu^{\top}+O(m^{d-1}|\rho^{\prime}|^{m})\,. \label{P_eqn1}%
\end{equation}
This yields
\begin{align}
\frac{\mathrm{d}}{\mathrm{d}z}Q_{j}^{n}(z)\Big|_{z=1}  &  =\sum_{m=0}%
^{n-1}\Big(\rho^{n-1}vu^{\top}q_{j}e_{j}^{\top}vu^{\top}+O(\rho^{m}%
|\rho^{\prime}|^{n-1-m}(n-1-m)^{d-1})\nonumber\\
&  ~+O(\rho^{n-1-m}|\rho^{\prime}|^{m}m^{d-1})+O(m^{d-1}|\rho^{\prime}%
|^{m})O((n-1-m)^{d}|\rho^{\prime}|^{n-1-m}\Big)\label{P_eqn2}\\
&  =n\rho^{n}u_{j}v_{j}vu^{\top}+O(n^{2}\rho^{n}),\nonumber
\end{align}
since $u^{\top}q_{j}=\rho u_{j}$ and $e_{j}^{\top}v=v_{j}$. Thus,
\begin{align*}
&  \frac{\frac{\mathrm{d}}{\mathrm{d}z}\pi_{j}(z)Q_{j}^{n}(z)\1 %
\Big|_{z=1}}{(n+1)\pi Q^{n}\1 }\overset{\eqref{P_eqn1},\eqref{P_eqn2}}%
{=}\frac{(0,\dots,\pi_{j},\dots,0)Q^{n}\1 +\pi(n\rho^{n}u_{j}%
v_{j}vu^{\top}+O(\rho^{n}))\1 }{(n+1)\pi(\rho^{n}vu^{\top}%
+O(n^{d-1}|\rho_{1}|^{n}))\1 }\\
&  =\frac{\pi(\rho^{n}u_{j}v_{j}vu^{\top})\1 }{\pi(\rho^{n}vu^{\top
})\1 }+O(\tfrac{1}{n})=u_{j}v_{j}+O(\tfrac{1}{n})\,.
\end{align*}
(iii) If $Q$ is cyclic with period $h$, then by Seneta \cite[Theorem
1.4]{Sene06} the matrix $Q^{h}$ is eventually positive. Hence by assertion
(ii)%
\[
\mathbb{E}_{\pi}\left[  \tfrac{1}{n+1}\#\big\{m\in\{0,1,\dots,n\}:X_{mh}%
=j\big\}\big|T>nh\right]  =u_{j}v_{j}+O(\tfrac{1}{n})\,
\]
for the right and left normalised eigenvectors $v$ and $u^{\top}$ of $Q$ for
the eigenvalue $\rho$, which are also eigenvectors for $Q^{h}$ for the
eigenvalue $\rho^{h}$. Naturally, $O(\tfrac{1}{nh})=O(\tfrac{1}{n})$ for
$n\to\infty$.

One also finds for every $\ell\in\{1,\dots,h-1\}$%
\[
\mathbb{E}_{\pi}\left[  \tfrac{1}{n+1}\#\big\{m\in\{0,1,\dots,n-1\}:X_{mh+\ell
}=j\big\}\big|T>nh+i\right]  =u_{j}v_{j}+O(\tfrac{1}{n})\,
\]
Summing for $\ell=0,\dots,h-1$ one finds%
\begin{align*}
&  \mathbb{E}_{\pi}\left[  \frac{1}{nh}\#\big\{\left\{  m\in\{0,\dots
,nh\}:X_{m}=j\big\}\big|T>nh\right\}  \right] \\
&  =\mathbb{E}_{\pi}\left[  \frac{1}{nh}\sum\limits_{\ell=0}^{h-1}\left[
\#\big\{m\in\{0,1,\dots,(n-1)\}:X_{mh+\ell}=j\big\}\big|T>nh\right]  \right]
\\
&  =\frac{1}{h}\sum\limits_{\ell=0}^{h-1}\mathbb{E}_{\pi}\left[  \frac{1}%
{n}\left[  \#\big\{m\in\{0,1,\dots,(n-1)\}:X_{mh+\ell}%
=j\big\}\big|T>nh\right]  \right] \\
&  =\frac{1}{h}\sum\limits_{\ell=0}^{h-1}\left(  u_{j}v_{j}+O(\tfrac{1}%
{n})\right)  =u_{j}v_{j}+O(\tfrac{1}{n}).
\end{align*}
This concludes the proof of Proposition \ref{P_prop1}.
\end{proof}

\end{document}